\numberwithin{equation}{section}
\newcommand{\bA}{\mathbb{A}}
\newcommand{\cO}{\mathcal{O}}
\newcommand{\bb}{\bm{b}}
\newcommand{\Mm}{{\bf{M}}}
\newcommand{\Qq}{\mathbb{Q}}
\newcommand{\Rr}{\mathbb{R}}
\newcommand{\vol}{\operatorname{vol}}
\newcommand{\Vol}{\operatorname{Vol}}
\newcommand{\nvol}{\widehat{\operatorname{vol}}}
\newcommand{\Center}{\operatorname{center}}
\newcommand{\mld}{{\operatorname{mld}}}
\newcommand{\smooth}{{\operatorname{smooth}}}
\newcommand{\loc}{\operatorname{loc }}
\newcommand{\lct}{\operatorname{lct}}
\newcommand{\Supp}{\operatorname{Supp}}
\newcommand{\mult}{\operatorname{mult}}
\newcommand{\ord}{\mathrm{ord}}
\newcommand{\Ii}{\Gamma}
\newcommand{\Ll}{\mathcal{L}}
\newcommand\MLD{{\rm{MLD}}}
\newcounter{parentnumber}
\newtheorem{thm}{Theorem}[section]
\newtheorem{conj}[thm]{Conjecture}
\newtheorem{cor}[thm]{Corollary}
\newtheorem{lem}[thm]{Lemma}
\newtheorem{prop}[thm]{Proposition}
\theoremstyle{definition}
\newtheorem{defn}[thm]{Definition}
\theoremstyle{definition}
\newtheorem{rem}[thm]{Remark}
\newtheorem{theorem}{Theorem}[section]
\theoremstyle{definition}
\newtheorem{remark}[theorem]{Remark}
\begin{document}

\title{ACC for local volumes}
\author{Jingjun Han, Jihao Liu, and Lu Qi}

\subjclass[2020]{14B05}
\keywords{Normalized volume, minimal log discrepancy, minimal model program, ascending chain condition}
\date{\today}

\begin{abstract}
We prove the ACC conjecture for local volumes. Moreover, when the local volume is bounded away from zero, we prove Shokurov's ACC conjecture for minimal log discrepancies.
\end{abstract}

\address{Shanghai Center for Mathematical Sciences, Fudan University, Jiangwan Campus, Shanghai, 200438, China}
\email{hanjingjun@fudan.edu.cn}

\address{Department of Mathematics, Northwestern University, 2033 Sheridan Road, Evanston, IL 60208, USA}
\email{jliu@northwestern.edu}

\address{School of Mathematical Sciences, East China Normal University, Shanghai 200241, China}
\email{lqi@math.ecnu.edu.cn}

\maketitle

\pagestyle{myheadings}\markboth{\hfill J. Han, J. Liu, and L. Qi \hfill}{\hfill ACC for local volumes\hfill}

\tableofcontents

\section{Introduction}\label{sec:Introduction}
We work over the field of complex numbers \(\mathbb{C}\).

In recent years, there has been significant progress in the algebro-geometric study of K-stability on both global and local scales. Notably, the Yau-Tian-Donaldson conjecture for (potentially singular) log Fano pairs \cite{LXZ22} and the stable degeneration conjecture for klt singularities \cite{XZ22} have been resolved. For further developments, we refer readers to \cite{Blu18, LX18, BX19, LX19, ABHLX20, Jia20, Xu20, BHLLX21, CP21, LWX21, XZ21} and the references therein.

The concept of the local volume of a klt singularity, $\nvol$, introduced by C. Li \cite{Li18}, is central to the theory of local K-stability. For instance, the stable degeneration conjecture \cite[Theorem 1.2]{XZ22} asserts that the \(\nvol\)-minimizer has a finitely generated graded algebra, which degenerates any klt singularity \(x \in (X,B)\) to a K-semistable log Fano cone singularity. Inspired by the K-stability theory and ODP Gap Conjecture \cite[Conjecture 5.5]{SS17} in differntial geometry, the set of local volumes is expected to satisfy the ascending chain condition (ACC).

\begin{conj}[ACC conjecture for local volumes]\label{conj: acc local volume}
    Let $d$ be a positive integer and $\Ii\subset [0,1]$ a set. Consider the set of local volumes
$$\Vol_{d,\Ii}^{\loc}:=\left\{\nvol(X\ni x,B)\Bigm| \dim X=d,B\in\Ii,x\text{ is a closed point}\right\}.$$
\begin{enumerate}
   \item If $\Ii$ is a finite set, then the only accumulation point of $\Vol_{d,\Ii}^{\loc}$ is $0$.
   \item If $\Ii$ satisfies the descending chain condition (DCC), then $\Vol_{d,\Ii}^{\loc}$ satisfies the ACC.
\end{enumerate}
\end{conj}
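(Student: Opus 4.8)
The plan is to reduce Conjecture~\ref{conj: acc local volume} to a boundedness statement for K-semistable log Fano cone singularities and then to analyze the normalized volume as a constructible invariant on the resulting bounded families. First, reduce to K-semistable log Fano cone singularities: by the stable degeneration theorem \cite{XZ22}, for any klt singularity $x\in(X,B)$ with $B\in\Ii$ the unique $\nvol$-minimizing valuation is quasi-monomial, has finitely generated associated graded ring, and induces a degeneration of $x\in(X,B)$ to a K-semistable log Fano cone singularity $(X_0\ni x_0,B_0;\xi_0)$ with $\nvol(X_0\ni x_0,B_0;\xi_0)=\nvol(X\ni x,B)$; the coefficients of $B_0$ lie in $\Ii$ (or, if necessary, in the DCC, resp. finite, set of finite subsums of $\Ii$ in $[0,1]$). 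Since every K-semistable log Fano cone singularity is in particular a klt singularity, $\Vol_{d,\Ii}^{\loc}$ coincides with the set of normalized volumes of $d$-dimensional K-semistable log Fano cone singularities with boundary coefficients in $\Ii$, and it suffices to establish (1) and (2) for the latter set.

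\emph{Boundedness.} The geometric heart of the argument is the claim that, for every $\epsilon>0$, the $d$-dimensional K-semistable log Fano cone singularities $(X_0\ni x_0,B_0;\xi_0)$ with $B_0\in\Ii$ and $\nvol\ge\epsilon$ are log bounded, together with their Reeb cones and torus actions. I would argue by induction on $d$: passing to a quasi-regular degeneration (legitimate by the finite generation theorem \cite{LXZ22} and the uniqueness of the minimizer \cite{XZ22}) replaces $\xi_0$ by a rational Reeb field, with a controlled drop in $\nvol$, and exhibits $X_0$ as an orbifold cone over a K-semistable log Fano pair $(S,\Delta_S)$ with $-(K_S+\Delta_S)\sim_{\QQ}rL$ for some ample $L$; one then computes $\nvol(X_0\ni x_0,B_0;\xi_0)$ as an explicit expression in $r$ and the anticanonical volume $(-(K_S+\Delta_S))^{\dim S}$, and combines $\nvol\ge\epsilon$ with the a priori bound $\nvol\le d^d$ to force $(-(K_S+\Delta_S))^{\dim S}$ to be bounded below and the index $r$ bounded. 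Boundedness of K-semistable log Fano pairs of bounded volume (via Birkar's solution of the Borisov--Alexeev--Borisov conjecture and K-moduli theory, together with the DCC hypothesis on the coefficients; see \cite{Jia20, Xu20}) then gives boundedness of $(S,\Delta_S)$, and the inductive hypothesis controls the lower-dimensional base when $\xi_0$ is irrational.

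\emph{Constructibility and conclusion.} Fix $\epsilon>0$ and a bounded family $\mathcal X\to T$ carrying, with its relative boundary and torus action, all singularities from the previous step of normalized volume $\ge\epsilon$. The volume functional on the Reeb cone \cite{Li18} expresses $\nvol$ of each fibre as the infimum, over its rational polyhedral Reeb cone, of an explicit algebraic function of finitely many intersection numbers, and these intersection numbers are constructible on $T$; a Noetherian induction on $T$ then shows that $\nvol$ takes only finitely many values on the locus of fibres whose coefficients lie in a fixed finite subset of $\Ii$, and values in an ACC set when $\Ii$ is merely DCC. For part (1): with $\Ii$ finite, $\Vol_{d,\Ii}^{\loc}\cap[\epsilon,\infty)$ is finite for each $\epsilon>0$, so $0$ is its only accumulation point. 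For part (2): a strictly increasing sequence $v_i\in\Vol_{d,\Ii}^{\loc}$ with limit $v_\infty>0$ would eventually lie in $[v_\infty/2,\infty)$, hence in a single bounded family, contradicting the ACC just obtained.

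\emph{Main obstacle.} The crux is the boundedness step, above all propagating the boundary coefficients and the a priori irrational Reeb vector through the quasi-regular degeneration while keeping the coefficients in a fixed DCC set and bounding the resulting loss of normalized volume; this is precisely where \cite{LXZ22} and \cite{XZ22} are indispensable. A secondary delicate point is that, since $\nvol$ is itself an infimum over the Reeb cone, the constructibility argument requires knowing that the $\nvol$-minimizing Reeb vector varies in a controlled way in families, which again rests on its uniqueness.
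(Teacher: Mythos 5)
Your proposal takes a genuinely different route from the paper, and the route has a serious gap at its core. The paper does not re-derive boundedness of K-semistable log Fano cone singularities: it takes \cite[Theorem 1.2]{XZ24} (discreteness of $\nvol$ for \emph{finite} coefficient sets) as a black box, and the entire content of the paper's proof of Theorem~\ref{thm: ACC local volume} is the reduction from DCC coefficients to finite coefficients. That reduction is carried out via Corollary~\ref{cor: nv lower bound imply mld} ($\nvol\geq\epsilon$ forces $\mld\leq l$), Theorem~\ref{thm: acc mld nv lower bound} (ACC for mlds when $\nvol$ is bounded below), the klt-complement construction of Proposition~\ref{prop: zhu24 4.8 generalized}, and above all the inversion-of-stability result Proposition~\ref{prop: inversion of stability nv sequence}, which lets one push DCC coefficients up to a finite set while keeping $K_X+B$ $\Rr$-Cartier and controlling the change in $\nvol$ via Lemma~\ref{lem: zhu24 2.11 real}. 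None of this machinery appears in your sketch.

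The gap: your ``Boundedness'' step silently assumes that $d$-dimensional K-semistable log Fano cone singularities with coefficients in a DCC set $\Ii$ and $\nvol\geq\epsilon$ are log bounded together with their torus actions. That is precisely the hard content of \cite{XZ24}, established there for \emph{finite} coefficient sets; you would need to re-prove and strengthen it for DCC $\Ii$, and the quasi-regular degeneration argument you sketch does not deliver that. Two specific obstructions: (i) degenerating an irrational Reeb vector $\xi_0$ to a nearby rational $\xi$ does not in general preserve K-semistability of the cone with respect to $\xi$, so you cannot simply ``exhibit $X_0$ as an orbifold cone over a K-semistable log Fano pair''; (ii) the inductive hypothesis requires the base $(S,\Delta_S)$ to again have boundary coefficients in a controlled set and bounded normalized volume at all its singularities, neither of which is justified. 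Your ``Constructibility'' step then quietly invokes the statement that $\nvol$ takes values in an ACC set on a bounded family with DCC coefficients --- this is exactly \cite[Theorem 1.2]{HLQ23}, a nontrivial prior theorem which the paper cites as a known special case rather than re-proving. In short, even granting the boundedness claim, your two-step argument would be reassembling \cite{XZ24} plus \cite{HLQ23}, whereas the paper sidesteps both by perturbing coefficients to a finite set; and without the boundedness claim the argument does not close. The perturbation strategy also has a concrete technical payoff you are missing: after perturbing a DCC boundary $B$ upward to a nearby divisor with finite coefficients, $K_X+B'$ may fail to be $\Rr$-Cartier, and establishing that it is (Theorem~\ref{thm: inversion stability local volume}) is the paper's key new technical input.
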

 Conjecture \ref{conj: acc local volume}(1) was formulated in \cite[Question 6.12]{LLX20} whose embryonic form can be found in \cite[Question 4.3]{LX19}. The full version of Conjecture \ref{conj: acc local volume} was formulated in \cite[Conjecture 8.4]{HLS19} and \cite[Conjecture 1.1]{HLQ23} as a natural extension of Conjecture \ref{conj: acc local volume}(1). 

Very recently, Xu and Zhuang \cite[Theorem 1.2]{XZ24} proved Conjecture \ref{conj: acc local volume}(1). The goal of this paper is to prove Conjecture \ref{conj: acc local volume}(2) hence confirm Conjecture \ref{conj: acc local volume} in full generality:


\begin{thm}\label{thm: ACC local volume}
   Conjecture \ref{conj: acc local volume}(2) holds. Hence, Conjecture \ref{conj: acc local volume} holds.
\end{thm}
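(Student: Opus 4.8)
The plan is to reduce Conjecture \ref{conj: acc local volume}(2) — the DCC-coefficients, ACC-conclusion version — to Conjecture \ref{conj: acc local volume}(1), the finite-coefficients version recently established by Xu--Zhuang, by a standard ``DCC set specialization'' argument combined with the lower-semicontinuity and degeneration properties of $\nvol$. Suppose for contradiction that $\Vol_{d,\Ii}^{\loc}$ fails ACC for some DCC set $\Ii\subset[0,1]$. Then there is a strictly increasing sequence $v_i := \nvol(X_i\ni x_i, B_i) \nearrow v_\infty$, with $\dim X_i = d$, $B_i \in \Ii$, and $v_\infty \le d^d$ (by the cone-over-smooth bound). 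Writing $B_i = \sum_j b_{i,j} B_{i,j}$ with $b_{i,j}\in\Ii$, after passing to a subsequence each coefficient column converges, $b_{i,j}\to b_{\infty,j}$, and by the DCC property $b_{i,j}\le b_{\infty,j}$ for $i\gg 0$; moreover the number of components through $x_i$ is bounded (this boundedness input is itself nontrivial — see below).

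**Next I would** use the effective bound on local volumes from below in terms of minimal log discrepancies (Liu--Xu, or the inequality $\nvol(X\ni x,B)\le n^n\,\mld(X\ni x,B)$ together with the reverse bound coming from the normalized-volume/mld comparison) to argue that since $v_i$ is bounded away from $0$, the mlds $\mld(X_i\ni x_i,B_i)$ are bounded away from $0$; this places us in the regime where the accompanying result advertised in the abstract — Shokurov's ACC for mlds when local volumes are bounded away from zero — can be invoked, giving boundedness of the singularities $x_i\in(X_i,B_i)$ in a suitable sense (bounded embedded dimension / bounded family after a crepant model). Within a bounded family, $\nvol$ is a constructible-type function that is lower semicontinuous, and one can compare $\nvol(X_i\ni x_i, B_i)$ with $\nvol(X_i\ni x_i, B_i')$ where $B_i' := \sum_j b_{\infty,j} B_{i,j}$ has the \emph{finite} coefficient set $\{b_{\infty,j}\}$: since $b_{i,j}\le b_{\infty,j}$, we have $B_i\le B_i'$, hence $\nvol(X_i\ni x_i,B_i)\ge \nvol(X_i\ni x_i,B_i')$, and the latter values live in $\Vol_{d,\Ii'}^{\loc}$ for a finite set $\Ii'$.

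**Then** the contradiction is extracted as follows: by Conjecture \ref{conj: acc local volume}(1) applied to the finite set $\Ii'\cup\{1\}$, the set $\{\nvol(X_i\ni x_i,B_i')\}$ has no nonzero accumulation point, so after a further subsequence it is eventually constant equal to some $v'>0$; on the other hand, a continuity/degeneration estimate — the local volume is continuous in the coefficients along a fixed family, with modulus controlled by $\sum_j(b_{\infty,j}-b_{i,j})$ which tends to $0$ — forces $v_i - \nvol(X_i\ni x_i,B_i') \to 0$, so $v_i \to v'$ eventually through values $\le v'$; combined with $v_i\nearrow v_\infty$ strictly increasing, this shows $v_i = v'$ for $i \gg 0$, contradicting strict monotonicity. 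Equivalently, the jumping behavior of $\nvol$ as coefficients increase to a finite-coefficient configuration must be ``lower-bounded by a gap'' coming from part (1), while the DCC forces the perturbation to be arbitrarily small, which is impossible.

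**The main obstacle** — and where the real work of the paper must go — is establishing the boundedness statement that makes the specialization argument run: namely that a sequence of klt singularities with local volumes bounded away from zero and DCC coefficients is bounded (equivalently, that one may extract a common family over which $\nvol$ varies continuously in the boundary coefficients). This is precisely the content of the mld–ACC result mentioned in the abstract, and it presumably rests on the Xu--Zhuang finite-coefficient theorem together with boundedness of K-semistable log Fano cone degenerations, the stable degeneration theorem \cite{XZ22}, and an approximation of DCC coefficients by finite ones that is compatible with taking the $\nvol$-minimizing valuation. Handling the interaction of the minimizing valuation with perturbed coefficients — in particular showing the minimizer does not ``jump'' wildly — is the delicate point; everything else is bookkeeping with DCC/ACC sets and the elementary inequality $\nvol(\cdot, B)\ge\nvol(\cdot,B')$ for $B\le B'$.
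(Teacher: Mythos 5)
Your high-level strategy — round the DCC coefficients up to a finite set, apply the Xu--Zhuang finite-coefficient theorem, and use an $\nvol$-continuity estimate to force a contradiction — is indeed the skeleton of the paper's argument, and your use of the inequality $\nvol(X\ni x,B)\geq\nvol(X\ni x,B')$ for $B\leq B'$ together with Lemma~\ref{lem: zhu24 2.11 real}-type estimates is the right mechanism. But there is a genuine gap at the central step.

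\textbf{The gap: $\Rr$-Cartierness after perturbation.} When you replace $B_i=\sum_j b_{i,j}B_{i,j}$ by $B_i':=\sum_j b_{\infty,j}B_{i,j}$, there is no reason whatsoever for $K_{X_i}+B_i'$ to remain $\Rr$-Cartier, since $X_i$ is not assumed $\Qq$-factorial. If $K_{X_i}+B_i'$ is not $\Rr$-Cartier, the quantity $\nvol(X_i\ni x_i,B_i')$ is simply undefined, and the comparison $\nvol(X_i\ni x_i,B_i)\geq\nvol(X_i\ni x_i,B_i')$ has no content. The paper singles this out explicitly in the introduction as \emph{the} key issue, and the entire technical heart of the proof (Proposition~\ref{prop: inversion of stability nv sequence}, the ``inversion of stability of $\Rr$-Cartierness'') is devoted to producing a projection $p_\alpha$ that rounds coefficients \emph{up}, has finite image, converges to the identity, and — crucially — is constructed so that $K_{X_i}+p_\alpha(B_i)$ is guaranteed $\Rr$-Cartier after passing to a subsequence. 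That construction rests on the ACC for mlds with local volume bounded below (Theorem~\ref{thm: acc mld nv lower bound}), the ACC for $a$-lc thresholds, bounded Cartier index from $\nvol\geq\epsilon$ (Lemma~\ref{lem: xz21 1.4 nonclosed point}), and the theory of $(a,\Rr)$-complements. The naive projection to $b_{\infty,j}$ does none of this.

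Your appeal to ``boundedness of singularities'' via ACC for mlds does not rescue the argument: the ACC for mlds does not place the germs $(X_i\ni x_i,B_i)$ themselves in a bounded family, so you cannot invoke ``everything is $\Rr$-Cartier on a bounded $\Qq$-factorial family.'' What Xu--Zhuang's \cite[Theorem 1.3]{XZ24} actually gives is boundedness of \emph{Kollár components}, and the paper's alternative second proof (Section 7, via $\delta$-plt blow-ups, Theorem~\ref{thm:existence of delta-plt blow-up} and Lemma~\ref{lem: Zhuang R-Cartier}) does go this route, which is closer in spirit to your sketch; but even there, the identification of which coefficient vectors yield an $\Rr$-Cartier $K_X+\tilde B$ (Lemma~\ref{lem: Zhuang R-Cartier}(2): they form finitely many rational affine spaces) is precisely the worked-out version of the step your proposal treats as free. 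A minor additional slip: since $B_i\leq B_i'$, one has $v_i\geq\nvol(X_i\ni x_i,B_i')$, so your convergence $v_i\to v'$ happens through values $\geq v'$, not $\leq v'$; the contradiction with strict monotonicity is still extractable, but the direction as written is wrong.
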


Theorem \ref{thm: ACC local volume} can be regarded as a local analog of \cite[Theorem 1.3]{HMX14}, which proved that the set of (global) volumes of log general type pairs with DCC coefficients also satisfies the DCC. Special cases of Theorem \ref{thm: ACC local volume} have been previously established, such as when \(X\) is bounded and \(\mathbb{Q}\)-factorial \cite[Theorem 1.2]{HLQ23}, or when \(\dim X \leq 3\) \cite{Zhu23}.

The proof of Theorem \ref{thm: ACC local volume} relies on \cite[Theorem 1.2]{XZ24}, but it is not a straightforward application. One of the crucial steps in the proof involves establishing the ACC for minimal log discrepancies (mlds) when the local volumes are bounded away from zero.

\begin{thm}\label{thm: acc mld nv lower bound no l}
    Let $d$ be a positive integer, $\epsilon$ a positive real number, and $\Ii\subset [0,1]$ a DCC set. The set
   	$$ \left\{\mld(X\ni x,B)\Bigm| \dim X=d,	B\in\Ii,\nvol(X\ni x,B)\ge\epsilon \right\}$$
			satisfies the ACC.
\end{thm}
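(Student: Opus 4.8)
The plan is to argue by contradiction and reduce the statement to a boundedness property of the underlying singularities. Suppose the displayed set does not satisfy the ACC. Then there exist klt singularities $x_i\in(X_i,B_i)$ with $\dim X_i=d$, $B_i\in\Ii$, and $\nvol(X_i\ni x_i,B_i)\ge\epsilon$, such that $a_i:=\mld(X_i\ni x_i,B_i)$ is strictly increasing; as the pairs are klt, $a_i\ge a_1>0$, and after passing to a subsequence $a_i\to a_\infty$ for some $a_\infty>0$. Since $\Ii$ is DCC, its nonzero elements admit a positive lower bound $\delta_0$, which will control the combinatorics of $B_i$ below.

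The main step is to invoke the boundedness of klt singularities whose local volume is bounded away from zero. Because $\nvol$ does not increase when an effective boundary is added, $\nvol(X_i\ni x_i)\ge\nvol(X_i\ni x_i,B_i)\ge\epsilon$, so it suffices to bound the varieties $X_i$ themselves. Building on \cite[Theorem 1.2]{XZ24}, together with the stable degeneration theorem and the boundedness of K-semistable log Fano cone singularities (which ultimately relies on Birkar's boundedness of Fano varieties), one obtains, after passing to a subsequence, a finite type morphism $\mathcal X\to T$, a $\bQ$-divisor $\mathcal B$ on $\mathcal X$, a section $\sigma$ of $\mathcal X\to T$, and closed points $t_i\in T$, such that $x_i\in(X_i,B_i)$ is isomorphic, étale-locally near $x_i$, to the fiber of $(\mathcal X,\mathcal B)\to T$ over $t_i$ at the point $\sigma(t_i)$. (Here one also uses that once $X_i$ is bounded, the condition that the coefficients of $B_i$ are $\ge\delta_0$ forces $(X_i,B_i)$ to have bounded degree; and one may apply the cited boundedness to $(X_i,0)$ if only finite coefficient sets are permitted there.) I expect this to be the crux of the argument: it is exactly where \cite[Theorem 1.2]{XZ24} and the local K-stability machinery genuinely enter, and it is not a formal consequence of the ACC for local volumes.

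Granting boundedness, one concludes by a standard ACC argument in a bounded family. After stratifying $T$ and blowing up, one may assume that a fixed finite set of prime divisors $F_1,\dots,F_N$ over $\mathcal X$ computes the minimal log discrepancy at each marked point, with log discrepancy data constant along the strata; hence $a_i=\min\{\,A_{X_i,B_i}(F_j)\mid \Center_{X_i}(F_j)=x_i\,\}$. Each $A_{X_i,B_i}(F_j)$ is an affine-linear function $c_j-\sum_k m_{jk}b_{k,i}$ of the coefficients $b_{k,i}$ of $B_i$, with $c_j$ and the $m_{jk}\ge 0$ ranging over a fixed finite set; moreover the number of components of $B_i$ through $x_i$ is bounded, because these coefficients are $\ge\delta_0$, the pair is klt, and $X_i$ is bounded. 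Since $\Ii$ is DCC, each set $\{\,c_j-\sum_k m_{jk}b_k\mid b_k\in\Ii\,\}$ satisfies the ACC, and a finite minimum, and then a finite union, of ACC sets again satisfies the ACC. Therefore $\{a_i\}$ satisfies the ACC, contradicting strict monotonicity, and the theorem follows.
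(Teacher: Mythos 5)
Your plan — replace the ACC question by a boundedness statement for the singularities and then run a ``stratify a bounded family'' argument — is genuinely different from the paper's route. The paper deduces the theorem from soft ingredients: a uniform Cartier index bound coming from \cite[Corollary 1.4]{XZ21} (Lemma \ref{lem: xz21 1.4 nonclosed point}), Nakamura's discreteness of mlds on varieties of fixed Gorenstein index \cite[Theorem 1.2]{Nak16}, the ACC for lcts, and the upper bound on $\mld^K$ from \cite[Theorem 1.3]{XZ24}; it never needs to place the germs $(X_i\ni x_i,B_i)$ in a bounded family. Your route, relying on full \'etale-local boundedness of the germs, would be a legitimate and perhaps more geometric alternative, but the argument as written has gaps at the first reduction.

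The inequality $\nvol(X_i\ni x_i)\ge\nvol(X_i\ni x_i,B_i)$ presupposes that $K_{X_i}$ is $\Qq$-Cartier, whereas a klt germ only has $K_{X_i}+B_i$ $\Rr$-Cartier; for instance a cone over a Fano with a boundary not $\Qq$-proportional to the anticanonical yields a klt pair with $K_X$ not $\Qq$-Cartier. So $\nvol(X_i\ni x_i)$ may be undefined and ``it suffices to bound the varieties $X_i$'' does not follow. The natural fix is to pass to a small $\Qq$-factorialization $f_i\colon Y_i\to X_i$ (as the paper does); then $Y_i$ is $\Qq$-Gorenstein and $\nvol(Y_i\ni y_i)\ge\epsilon$ for $y_i\in f_i^{-1}(x_i)$, so $Y_i$ is bounded. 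But now $f_i^{-1}(x_i)$ can be positive dimensional and $\mld(X_i\ni x_i,B_i)=\inf_{\eta\in f_i^{-1}(x_i)}\mld(Y_i\ni\eta,B_{Y_i})$ is an infimum over not necessarily closed points $\eta$ with varying center dimension. Your ``standard ACC argument in a bounded family at a marked section'' is set up for a fixed closed marked point and does not directly apply; handling the mld over moving non-closed points is precisely what the paper's Lemma \ref{lem: acc mld nv lower bound q factorial} is for, and it does so via the Cartier index bound plus \cite{Nak16} rather than via a family.

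A second, related gap is the passage from boundedness of the ambient $X_i$ to boundedness of the pairs. You need a bounded family $(\mathcal{X},\mathcal{B})\to T$ of \emph{pairs} containing the $(X_i,B_i)$ \'etale-locally near $x_i$, but the boundedness in \cite{XZ24} is available for finite coefficient sets, and applying it to $(X_i,0)$ reintroduces the $\Qq$-Gorenstein problem while also failing to control $B_i$ as part of the family. Bounding the number of components of $B_i$ through $x_i$ is fine, but promoting this to a bounded family of pairs with DCC boundary is exactly the point where one needs to first perturb the coefficients into a fixed finite set while preserving $\Rr$-Cartierness and the local volume lower bound — this is the content of Proposition \ref{prop: inversion of stability nv sequence}, which your argument bypasses. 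As written the proposal assumes a boundedness statement strictly stronger than what is in the literature for DCC coefficients, and this is the crux it needed to supply.
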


The ACC conjecture for mlds, proposed by Shokurov \cite{Sho88}, remains widely open in dimensions three and higher. This conjecture is closely related to the termination of flips and the existence of minimal models  \cite{Sho04,HL22}, one of the remaining core conjectures in the minimal model program. Notably, in dimension four, the first two authors proved that the ACC conjecture for exceptionally non-canonical (enc) singularities implies the termination of flips \cite[Theorem 1.2]{HL22}. It is conjectured that the local volumes of enc singularities have a positive lower bound (cf. \cite[Conjecture 1.8]{HL22}, \cite[Conjecture 1.6]{HLQ23}). Therefore, we expect Theorem \ref{thm: acc mld nv lower bound no l} to be instrumental in proving the termination of flips in dimension four or higher.


Special cases of Theorem \ref{thm: acc mld nv lower bound no l} have been previously established when \(X\) is bounded and \(\Qq\)-factorial \cite[Corollary 1.9]{HLQ23}  or when $(X\ni x,B)$ is exceptional \cite[Theorem 1.2]{HLS19}. For recent progress on the ACC conjecture for mlds over the last five years, we refer readers to \cite{HLS19,Jia21,Kaw21,LX21,HLL22,HL22,LL22,NS22a,NS22b,HLQ23,HL23,Kaw23,CGN24,HL24} and references therein.


\medskip

Another key ingredient in the proof of Theorem \ref{thm: ACC local volume} is a result on the \emph{inversion of stability} of $\Rr$-Cartierness for pairs whose local volumes are bounded away from zero. Roughly speaking, ``inversion of stability" of a property $\mathcal{P}$ indicates that for an $\Rr$-divisor $B$ with coefficients belonging to a finite set $\Ii_0$, if $\mathcal{P}$ holds for some $\Rr$-divisor $B'$ sufficiently close to $B$, then $\mathcal{P}$ also holds for $B$. This concept was implicitly proposed by Birkar and Shokurov \cite[Main Proposition 2.1]{BS10} to study the ACC conjecture for mlds (cf. \cite[Theorem 5.19]{HLL22}, \cite[Proposition 6.3]{HL23}), and it is also crucial for studying the boundedness of complements (e.g. \cite{HLS19,Sho20,CH21}). 
In order to prove Theorem \ref{thm: ACC local volume}, we need to reduce to the case of finite coefficients and thus apply \cite{XZ24}. The key issue is that after perturbing the coefficients of a log pair, the log canonical divisor may fail to be $\Rr$-Cartier. To overcome this issue, we prove the following result, which might be of its own interest.

\begin{thm}\label{thm: inversion stability local volume}
    Let $d$ be a positive integer, $\epsilon$ a positive real number, and $\Ii_0\subset [0,1]$ a finite set. Then there exists a positive real number $\tau$ depending only on $d,\epsilon$, and $\Ii_0$ satisfying the following. Assume that
    \begin{enumerate}
        \item $(X\ni x,B')$ is a klt germ of dimension $d$,
        \item $\nvol(X\ni x,B')\geq\epsilon$,
        \item $B\geq B'$, $||B-B'||<\tau$, and $B\in\Ii_0$.
    \end{enumerate}
    Then $(X\ni x,B)$ is klt near $x$. In particular, $K_X+B$ is $\Rr$-Cartier near $x$.
\end{thm}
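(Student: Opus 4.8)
The plan is to argue by contradiction using a combination of the lower semicontinuity of normalized volumes in families and boundedness results for klt singularities with local volume bounded away from zero. Suppose the statement fails. Then for each $n$ we find a klt germ $(X_n \ni x_n, B'_n)$ of dimension $d$ with $\nvol(X_n \ni x_n, B'_n) \geq \epsilon$, together with $B_n \geq B'_n$, $\|B_n - B'_n\| < \frac{1}{n}$, $B_n \in \Ii_0$, but $(X_n \ni x_n, B_n)$ is not klt near $x_n$ (or $K_{X_n} + B_n$ is not $\Rr$-Cartier near $x_n$).

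The first step is to put these germs into a bounded family. By \cite{XZ24} (the resolved Conjecture \ref{conj: acc local volume}(1)), or more precisely by the boundedness results available for klt singularities whose local volume is bounded below — namely the statement that klt germs $(X \ni x, \Delta)$ of fixed dimension $d$ with $\nvol(X \ni x, \Delta) \geq \epsilon$ and coefficients in a DCC (here finite) set form a bounded family after a suitable choice of $\Delta$ with coefficients in a finite set — I would arrange that, after passing to a subsequence, all the germs $(X_n \ni x_n, B'_n + \text{(extra boundary)})$ sit as fibers of a single $\bQ$-Gorenstein family $(\mathcal{X} \ni \mathcal{x}, \mathcal{B}') \to T$ over a variety $T$ of finite type, with the $B'_n$ being restrictions of a fixed $\Rr$-divisor $\mathcal{B}'$ on $\mathcal{X}$. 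Since $\Ii_0$ is finite and $B_n \to B'_n$, after a further subsequence the ``limit'' divisor $\mathcal{B}$ (with coefficients the limits of those of $B_n$, which are eventually constant in $\Ii_0$ and force $B'_n$'s coefficients into $\overline{\Ii_0}$, i.e. into $\Ii_0$ itself) is also a divisor on $\mathcal{X}$; in fact for $n \gg 0$ the coefficients of $B_n$ are constant, so $\mathcal{B}$ is a genuine $\Rr$-divisor on the family and $B_n = \mathcal{B}|_{X_n}$ for $n \gg 0$.

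The second step is to run the contradiction on the family. The key inputs are: (i) lower semicontinuity of $t \mapsto \nvol(\mathcal{X}_t \ni \mathcal{x}_t, \mathcal{B}'_t)$ \cite{BL21,Xu20}, which together with $\nvol \geq \epsilon$ on a dense set of fibers gives $\nvol(\mathcal{X}_t \ni \mathcal{x}_t, \mathcal{B}'_t) \geq \epsilon > 0$ for the limit fiber, so in particular the limit germ $(\mathcal{X}_0 \ni \mathcal{x}_0, \mathcal{B}'_0)$ is klt; (ii) inversion of stability / openness of klt in families of pairs — since on a Zariski-dense (constructible) subset of $T$ the pair $(\mathcal{X}_t \ni \mathcal{x}_t, \mathcal{B}_t)$ agrees with $(\mathcal{X}_t \ni \mathcal{x}_t, \mathcal{B}'_t + \text{small})$, and $\nvol$ of the latter is uniformly bounded below, I can use the ACC for log canonical thresholds (\cite{HMX14}) applied to the single family $(\mathcal{X} \ni \mathcal{x}, \mathcal{B}')$ with the perturbation directions $\mathcal{B} - \mathcal{B}'$ to conclude: the set of $t$ where $(\mathcal{X}_t, \mathcal{B}'_t + s(\mathcal{B}_t - \mathcal{B}'_t))$ is klt for all $s \in [0,1]$ is the complement of finitely many ``thresholds,'' and uniform $\nvol$-bound prevents these thresholds from accumulating at the limit. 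Concretely: the function $n \mapsto \lct$-type quantity measuring how far $B'_n$ can be pushed toward $B_n$ inside the klt locus must, by ACC of lct and the finiteness of $\Ii_0$, take values in a finite set; combined with $\|B_n - B'_n\| \to 0$ this forces $(X_n \ni x_n, B_n)$ klt for $n \gg 0$, a contradiction. The $\Rr$-Cartierness of $K_{X_n} + B_n$ near $x_n$ is then automatic since klt pairs have $\Rr$-Cartier log canonical divisor by definition.

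The main obstacle I anticipate is the first step: producing the bounded family. One cannot directly bound $(X_n \ni x_n, B'_n)$ because $B'_n$ has real (a priori irrational, DCC-limit) coefficients, and the boundedness theorems are cleanest for finite coefficient sets; the standard fix is to use the resolved Conjecture \ref{conj: acc local volume}(1) together with effective birationality / boundedness of the minimizing valuations to realize all the $(X_n \ni x_n)$ as fibers of finitely many families after stratifying, and then to track the boundaries $B'_n$ and $B_n$ carefully through this stratification — the subtlety being that $B'_n$ need not have coefficients in a finite set even though $B_n$ does, so one must either first discretize $B'_n$ (replacing it by a nearby finite-coefficient divisor with comparable $\nvol$, which itself requires care) or work with the DCC structure directly and invoke the global ACC for lct of \cite{HMX14} in families. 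A secondary technical point is ensuring the marked point $x_n$ varies in a section of the family and that ``near $x$'' statements are preserved under the limiting process, which is handled by shrinking $T$ and using the openness of the klt locus.
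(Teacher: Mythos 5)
Your proposal takes a genuinely different route from the paper, but it contains a gap at precisely the step where the theorem is hard, and the gap is not resolved.

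The paper proves this theorem by contradiction as you do, but the heavy lifting is done by Proposition~\ref{prop: inversion of stability nv sequence}: after taking a small $\Qq$-factorialization $Y_i\to X_i$, one perturbs the coefficients of $B_i'$ upward via a carefully chosen projection $p_\alpha$ (built on the ACC for mlds and $a$-lc thresholds of Section~3), runs a $-(K_{Y_i}+p_\alpha(B_{Y_i}))$-MMP over $X_i$, extracts an $(a,\Rr)$-complement, and shows that the ample model over $X_i$ must be $X_i$ itself --- this is what establishes $\Rr$-Cartierness of $K_{X_i}+p_\alpha(B_i')$, and hence of $K_{X_i}+B_i$ once $\|p_\alpha(B_i')-B_i\|\to 0$ forces $p_\alpha(B_i')=B_i$ for $i\gg 0$. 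The kltness then follows cheaply from Proposition~\ref{prop: zhu24 4.8 generalized}. No boundedness of the germs $(X_i\ni x_i)$ is used in this chain.

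Your approach, in contrast, relies on realizing the $(X_n\ni x_n,B_n')$ as fibers of a bounded family and then applying lower semicontinuity and ACC for lct on the family. This is close in spirit to the paper's \emph{alternative} proof in Section~7, which uses $\delta$-plt blow-ups (Theorem~\ref{thm:existence of delta-plt blow-up}) plus boundedness of the resulting log Fano exceptional divisors. But your write-up has a genuine gap: establishing that $K_{X_n}+B_n$ is $\Rr$-Cartier near $x_n$. Even granting boundedness of the germs $(X_n\ni x_n)$ (which does follow from $\nvol(X_n\ni x_n)\geq\epsilon$ and the results of \cite{XZ24}), $\Rr$-Cartierness is not an open, closed, or obviously constructible condition as the boundary coefficients vary in a family --- the $X_n$ need not be $\Qq$-factorial, the coefficients of $B_n'$ lie in a DCC set rather than a finite one so $B_n'$ cannot be taken to be the restriction of a fixed divisor, and a limit of $\Rr$-Cartier divisors need not be $\Rr$-Cartier. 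You cannot invoke ACC for lct for a perturbation direction $\mathcal{B}-\mathcal{B}'$ before knowing that this direction is $\Rr$-Cartier fiberwise, so the argument as sketched is circular at the crucial step. The paper's Section~7 resolves exactly this point with Lemma~\ref{lem: Zhuang R-Cartier}, which shows that for germs admitting a $\delta$-plt blow-up the set of coefficient vectors making $K_X+\tilde B$ $\Rr$-Cartier is one of finitely many rational affine subspaces; this is the key ingredient your proposal is missing, and your acknowledged ``main obstacle'' is not a technical inconvenience but the real content of the theorem.
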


In the theorem above, the notation $\|\cdot\|$ denotes the maximal absolute value of the coefficients of an $\Rr$-divisor.

Theorem \ref{thm: inversion stability local volume} confirms a special case of \cite[Conjecture 6.6]{HL23} proposed by the first author. In our proof, we require a stronger but more technical version of Theorem \ref{thm: inversion stability local volume}, detailed in Proposition \ref{prop: inversion of stability nv sequence} below. The proof of Theorem \ref{thm: inversion stability local volume} essentially relies on the ACC for mlds (Theorem \ref{thm: acc mld nv lower bound no l}) and the theory on klt complements.

\medskip

\noindent {\it Applications.} Inspired by \cite[Theorem 1.3]{XZ24}, we prove the following result on an upper bound of $\mld^K$:

\begin{thm}\label{thm: upper bound mldk}
    Let $d$ be a positive integer, $\epsilon$ a positive real number, and $\Ii\subset [0,1]$ a DCC set. Then there exists a positive real number $l$ depending only on $d,\epsilon$ and $\Ii$ satisfying the following. Assume that $(X\ni x,B)$ is a klt germ of dimension $d$, $B\in\Ii$, and $\nvol(X\ni x,B)\geq\epsilon$. Then $\mld^K(X\ni x,B)\leq l$, i.e. there exists a Koll\'ar component $E$ of $(X\ni x,B)$ such that $a(E,X,B)\leq l$.
\end{thm}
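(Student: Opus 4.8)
The plan is to reduce to the case of a finite coefficient set, and there to invoke the finite‑coefficient results of Xu--Zhuang \cite{XZ24} — concretely, \cite[Theorem 1.3]{XZ24} together with the boundedness of the plt blowups realizing the relevant Koll\'ar components: for $d$, $\epsilon$ and a finite set $\Ii_0\subset[0,1]$ there is $l_0=l_0(d,\epsilon,\Ii_0)$ and a bounded family of plt blowups such that every klt germ $(X\ni x,B_0)$ of dimension $d$ with $B_0\in\Ii_0$ and $\nvol(X\ni x,B_0)\ge\epsilon$ carries a Koll\'ar component extracted by a member of this family, in particular one of log discrepancy $\le l_0$. The real work is the descent from a DCC set $\Ii$ to a finite one, for which the engine is the inversion of stability of $\Rr$‑Cartierness — Theorem \ref{thm: inversion stability local volume}, in the sequential, normalized‑volume‑aware refinement Proposition \ref{prop: inversion of stability nv sequence}.

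I would run a contradiction argument. Suppose the theorem fails for some $d,\epsilon,\Ii$, and pick klt germs $(X_i\ni x_i,B_i)$ of dimension $d$ with $B_i\in\Ii$, $\nvol(X_i\ni x_i,B_i)\ge\epsilon$ and $\mld^K(X_i\ni x_i,B_i)>i$. Since $\Ii$ is DCC, $\Ii\cap(0,\delta_0)=\emptyset$ for some $\delta_0>0$, so each $B_i$ has at most $N=N(d,\delta_0)$ components through $x_i$ (a klt germ of dimension $d$ admits a divisor of log discrepancy $\le d$ over the point, which bounds the number of boundary components of coefficient $\ge\delta_0$ through it). Passing to a subsequence, I may assume that, componentwise, the coefficients of the $B_i$ converge — from below, by the DCC property — to fixed values $b_1,\dots,b_N\in[0,1]$. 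Let $B_i^+\ge B_i$ be the boundary supported on $\Supp B_i$ with these coefficients, so that $B_i^+\in\Ii_0$ for the single finite set $\Ii_0=\{0,b_1,\dots,b_N\}$ and $\|B_i^+-B_i\|\to0$. Applying Proposition \ref{prop: inversion of stability nv sequence} to the sequence $B_i^+\ge B_i$ over the germs $(X_i\ni x_i,B_i)$ (whose normalized volumes are $\ge\epsilon$), I obtain, for all $i\gg0$, that $(X_i\ni x_i,B_i^+)$ is klt near $x_i$ and $\nvol(X_i\ni x_i,B_i^+)\ge\epsilon/2$. In particular $K_{X_i}+B_i^+$ — and hence $B_i^+-B_i=(K_{X_i}+B_i^+)-(K_{X_i}+B_i)$ — is $\Rr$‑Cartier near $x_i$.

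Now for $i\gg0$ the finite‑coefficient case applies to $(X_i\ni x_i,B_i^+)$ and yields a Koll\'ar component $E_i$, extracted by a plt blowup $\pi_i\colon Y_i\to X_i$ lying in a bounded family. Since $B_i\le B_i^+$ and $K_{X_i}+B_i$ is already $\Rr$‑Cartier near $x_i$, decreasing the ($\Rr$‑Cartier) boundary preserves plt‑ness on $Y_i$, so $\pi_i$ is also a plt blowup of $(X_i\ni x_i,B_i)$ and $E_i$ is a Koll\'ar component of $(X_i\ni x_i,B_i)$. Finally $a(E_i,X_i,B_i)$ is determined on $Y_i$ by adjunction from $K_{E_i}+\Diff_{E_i}(B_i)$ and $-E_i|_{E_i}$; as $Y_i,E_i$ range over a bounded family and the boundary coefficients lie in $[0,1]$, this quantity is bounded by some $l=l(d,\epsilon,\Ii)$. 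Hence $\mld^K(X_i\ni x_i,B_i)\le a(E_i,X_i,B_i)\le l$ for $i\gg0$, contradicting $\mld^K(X_i\ni x_i,B_i)>i$.

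The step I expect to be the crux is the second paragraph's conclusion $\nvol(X_i\ni x_i,B_i^+)\ge\epsilon/2$: enlarging the boundary only decreases the normalized volume, and nothing elementary prevents the latter from collapsing, so one genuinely needs the quantitative sequential inversion of stability, whose proof in turn uses the ACC for minimal log discrepancies with normalized volume bounded below (Theorem \ref{thm: acc mld nv lower bound no l}) and the theory of klt complements. A closely related subtlety is that one must extract $\Rr$‑Cartierness of $K_{X_i}+B_i^+$ near $x_i$ — not merely klt‑ness on a punctured neighbourhood — since this is precisely what permits transferring the Koll\'ar component, and its log discrepancy bound, from $B_i^+$ down to $B_i$.
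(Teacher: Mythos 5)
Your strategy — reduce to finite coefficients, invoke \cite[Theorem 1.3]{XZ24}, transfer the Koll\'ar component back to $B$ — is broadly sound, and it is a genuinely different route from the paper's: you argue by contradiction and route through Proposition \ref{prop: inversion of stability nv sequence}, whereas the paper's proof is direct and uses Proposition \ref{prop: zhu24 4.8 generalized} twice, never invoking the inversion-of-stability machinery. But there are two gaps.

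First, you assert that Proposition \ref{prop: inversion of stability nv sequence} delivers $\nvol(X_i\ni x_i,B_i^+)\geq\epsilon/2$. It does not: that proposition furnishes the projection $p_\alpha$, the $\Rr$-Cartierness of $K_{X_i}+p_\alpha(B_i)$, and the value $\mld(X_i\ni x_i,p_\alpha(B_i))=a$, but no normalized volume lower bound. To recover one you would need, as in the proof of Theorem \ref{thm: acc nv with mld upper bound}, to first build a bounded lc complement $B_i^{++}\geq(1+\delta)B_i$ via Proposition \ref{prop: zhu24 4.8 generalized}, deduce $t_i:=\lct(X_i\ni x_i,B_i;p_\alpha(B_i)-B_i)\to\infty$, and then apply Lemma \ref{lem: zhu24 2.11 real}. (Also, your $B_i^+$ built from componentwise limits and the proposition's $p_\alpha(B_i)$ agree only after passing to a subsequence; working with $p_\alpha(B_i)$ directly is cleaner.) Second — the more serious gap — your last step claims $a(E_i,X_i,B_i)$ is bounded because $Y_i,E_i$ lie in a bounded family and the boundary coefficients lie in $[0,1]$. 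Boundedness of the family does not give a uniform bound on $\mult_{E_i}(B_i^+-B_i)$, and the adjunction identity $\nvol(E,X,B)=a(E,X,B)^d(-E|_E)^{d-1}$ only rebounds $a(E_i,X_i,B_i)$ against itself. What actually closes this in the paper is a second application of Proposition \ref{prop: zhu24 4.8 generalized} to produce $B'$ with $(X\ni x,B')$ lc and $B'-B^+\geq t(B^+-B)$ for a uniform $t>0$, whence
\[
\mult_E(B^+-B)\leq\tfrac{1}{t}\mult_E(B'-B^+)=\tfrac{1}{t}\bigl(a(E,X,B^+)-a(E,X,B')\bigr)\leq\tfrac{M}{t}
\]
using $a(E,X,B')\geq 0$, so $a(E,X,B)\leq(1+\tfrac{1}{t})M$. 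This nested comparison $B\leq B^+\leq B'$ is the key idea missing from your argument; with it in hand, the contradiction setup and the inversion-of-stability proposition become unnecessary.
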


When the local volumes are bounded away from zero, we prove the uniform boundedness conjecture for mlds due to the first author (see \cite{HL23,HLL22}). We remark that the proofs of several special cases of this conjecture are intertwined with the ACC conjecture for mlds \cite{HLS19,CH21,HLL22,HL23,HL24}.

\begin{thm}\label{thm: Un upper bound mld nv lower bound}
 Let $d$ be a positive integer, $\epsilon$ a positive real number, and $\Ii\subset [0,1]$ a DCC set. Then there exists a positive integer $l$ depending only on $d,\epsilon,\Ii$ satisfying the following. Assume that $(X\ni x,B)$ is a klt $\Qq$-Gorenstein klt germ of dimension $d$, $B\in\Ii$, and $\nvol(X\ni x,B)\geq\epsilon$. Then there exists a prime divisor $E$ over $X\ni x$, such that $a(E,X,B)=\mld(X\ni x,B)$ and $a(E,X,0)\leq l$.
\end{thm}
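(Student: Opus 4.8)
The plan is to argue by contradiction, to reduce to the case of a finite coefficient set so that the boundedness theory of \cite{XZ24} applies, and then to realize a divisor computing the minimal log discrepancy on a bounded model. Suppose the statement fails for some $d$, $\epsilon$, $\Ii$. Then there is a sequence of $\Qq$-Gorenstein klt germs $(X_i\ni x_i,B_i)$ of dimension $d$ with $B_i\in\Ii$ and $\nvol(X_i\ni x_i,B_i)\ge\epsilon$ such that
$$l_i:=\min\bigl\{a(E,X_i,0)\mid E\text{ over }X_i\ni x_i,\ a(E,X_i,B_i)=\mld(X_i\ni x_i,B_i)\bigr\}\to\infty.$$
By Theorem \ref{thm: acc mld nv lower bound no l}, after passing to a subsequence I may assume $\mld(X_i\ni x_i,B_i)$ equals a fixed number $a$, and since $\nvol(X_i\ni x_i,B_i)\ge\epsilon$ one has $a>0$ (a positive lower bound for $\nvol$ forces one for $\mld$ in fixed dimension). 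Shrinking each $X_i$, I may assume every component of $B_i$ meets $x_i$, so that every such component has $(X_i,B_i)$-log discrepancy one minus its coefficient and at least $\mld(X_i\ni x_i,B_i)=a$; hence the coefficients of the $B_i$ are bounded above by $1-a<1$, and by the DCC of $\Ii$ I may take them nondecreasing in $i$.

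First I would reduce to a finite coefficient set. Fix a finite set $\Ii_0\subset[0,1-a]\cap\Qq$ and, for each $i$, let $B_i^+\ge B_i$ be supported on $\Supp B_i$, with coefficients in $\Ii_0$ and $\|B_i^+-B_i\|$ as small as we like. Applying Theorem \ref{thm: inversion stability local volume} with $(X_i\ni x_i,B_i)$ the klt germ of volume at least $\epsilon$ and $B_i^+$ the nearby finite-coefficient divisor yields that $(X_i\ni x_i,B_i^+)$ is klt and $K_{X_i}+B_i^+$ is $\Qq$-Cartier near $x_i$; combined with the continuity of $\nvol$ (and, for the behavior along the sequence, the sharper Proposition \ref{prop: inversion of stability nv sequence}) I may also arrange $\nvol(X_i\ni x_i,B_i^+)\ge\epsilon/2$ and, from that proposition, a prime divisor $E_i$ over $X_i\ni x_i$ which computes $\mld(X_i\ni x_i,B_i)$ and for which $a(E_i,X_i,B_i^+)-\mld(X_i\ni x_i,B_i^+)$ and $\mult_{E_i}(B_i^+-B_i)$ are controlled. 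Since $a(E_i,X_i,0)=a(E_i,X_i,B_i^+)+\mult_{E_i}B_i^+$, a uniform bound along a divisor computing $\mld(X_i\ni x_i,B_i^+)$ will pass to $E_i$ up to a bounded error; so it is enough to prove the theorem for the finite set $\Ii_0$.

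Next I would invoke boundedness. By \cite{XZ24} — with Theorem \ref{thm: upper bound mldk} supplying a Koll\'ar component of bounded log discrepancy on each member, which bounds the relevant total spaces — the $\Qq$-Gorenstein klt germs of dimension $d$ with coefficients in $\Ii_0$ and $\nvol\ge\epsilon/2$ form a log bounded family; after a subsequence there are a bounded family $(\mathcal X\to T,\mathcal B)$, closed points $t_i\in T$, and isomorphisms of germs $(X_i\ni x_i,B_i^+)\cong(\mathcal X_{t_i}\ni\xi_i,\mathcal B_{t_i})$. For such a log bounded family the minimal log discrepancy at the marked point is, on every fiber, computed by a divisor obtained from the ambient space by a sequence of blow-ups of length bounded in terms of the family only; this follows from a Noetherian induction on $T$ (after stratifying and taking a simultaneous log resolution) in the style of \cite{HMX14,HLQ23}. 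Hence $a(E,\mathcal X_t,0)$ is bounded by a constant depending only on $d,\epsilon,\Ii_0$, hence only on $d,\epsilon,\Ii$; feeding this back through the previous step bounds $a(E_i,X_i,0)$ and contradicts $l_i\to\infty$.

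The step I expect to be the main obstacle is keeping the reduction to a finite coefficient set compatible with the divisor computing the minimal log discrepancy: perturbing the boundary shifts all log discrepancies and may change the minimizer, so producing a single divisor that simultaneously (essentially) computes the minimal log discrepancies of $B_i$ and $B_i^+$ with a bounded correction term is the genuinely delicate point, and it is exactly what forces the use of the precise inversion of stability of Proposition \ref{prop: inversion of stability nv sequence} together with the ACC for minimal log discrepancies of Theorem \ref{thm: acc mld nv lower bound no l}. A second, more technical hurdle is extracting from \cite{XZ24} the precise form of log boundedness — in particular a log resolution in bounded families — that is required in order to realize an mld-computing divisor on a bounded model.
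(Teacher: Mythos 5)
Your approach is considerably more complicated than the paper's, and it contains a genuine gap at the boundedness step. You claim that by \cite{XZ24} (together with Theorem~\ref{thm: upper bound mldk}) the $\Qq$-Gorenstein klt germs of dimension $d$ with coefficients in $\Ii_0$ and $\nvol\ge\epsilon/2$ form a log bounded family. This is not what \cite{XZ24} proves: \cite[Theorem 1.2]{XZ24} gives discreteness of local volumes for finite coefficient sets, and \cite[Theorem 1.3]{XZ24} bounds $\mld^K$; neither statement (nor Theorem~\ref{thm: upper bound mldk}) asserts that the klt germs themselves lie in a bounded family. Boundedness results in this circle concern degenerations to log Fano cone singularities, not the singularities themselves, and indeed the hypothesis ``$X$ is bounded'' in \cite[Corollary 1.9]{HLQ23} was imposed precisely because such boundedness is not available in general. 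So the Noetherian-induction-on-$T$ step, which relies on having a bounded family $(\mathcal X\to T,\mathcal B)$, does not get off the ground. Your earlier reduction to finite coefficients via Proposition~\ref{prop: inversion of stability nv sequence} is also more than is needed here and, as you note yourself, is delicate to carry out in a way that keeps track of a single mld-computing divisor before and after perturbation.

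The paper's proof is direct and avoids all of this. By Corollary~\ref{cor: nv lower bound imply mld} there is $l'=l'(d,\epsilon)$ with $\mld(X\ni x,B)\le l'$; pick a divisor $E$ over $X\ni x$ computing this mld, so $a(E,X,B)\le l'$. Since $X$ is $\Qq$-Gorenstein, Theorem~\ref{thm: zhu24 1.3 real} gives $\delta=\delta(d,\epsilon)>0$ with $(X\ni x,(1+\delta)B)$ lc, whence
\[
\mult_E B=\tfrac{1}{\delta}\bigl(a(E,X,B)-a(E,X,(1+\delta)B)\bigr)\le \tfrac{l'}{\delta},
\]
and therefore $a(E,X,0)=a(E,X,B)+\mult_E B\le l'+\tfrac{l'}{\delta}$. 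The whole argument is a two-line bookkeeping of log discrepancies once the mld bound and the lc threshold bound are in hand; no passage to finite coefficients, no boundedness of families, and no stratification is required.
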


We also confirm a conjecture of Birkar (see \cite[Conjecture 1.4]{Che24}) provided that the local volumes are bounded away from zero and the ambient variety is $\Qq$-Gorenstein. Note that Birkar's conjecture is equivalent to Shokurov's boundedness of klt complement conjecture \cite[Theorem 1.6]{Che24}.

\begin{thm}\label{thm: Birkar conje lct nv}
  Let $d$ be a positive integer, $\epsilon$ positive real number. Then there exists a positive real number $t$ depending only on $d,\epsilon$ satisfying the following.

Let $(X\ni x,B)$ be a klt germ of dimension $d$ such that $\nvol(X\ni x,B)\ge \epsilon$. Then there exists a Cartier divisor $D>0$ on $X$ near $x$, such that $(X\ni x,B+t D)$ is lc.  
\end{thm}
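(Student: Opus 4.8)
The plan is to sidestep complements altogether: I would bound the log canonical threshold of the maximal ideal $\mathfrak m_x$ from below purely in terms of $d$ and $\epsilon$, and then realise a suitable multiple of $\mathfrak m_x$ by an honest Cartier divisor through $x$. \emph{Step 1: a uniform lower bound for $\mathrm{lct}_x(X,B;\mathfrak m_x)$.} The starting point is the ``normalized volume $\le$ normalized multiplicity'' inequality from the theory of $\nvol$ (see \cite{Li18}): for every $\mathfrak m_x$-primary ideal $\mathfrak a\subset\mo_{X,x}$ one has $\nvol(X\ni x,B)\le \mathrm{lct}_x(X,B;\mathfrak a)^{d}\cdot e(\mathfrak a)$, where $e(\mathfrak a)$ is the Hilbert--Samuel multiplicity. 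Applying this with $\mathfrak a=\mathfrak m_x$ and using $\nvol(X\ni x,B)\ge\epsilon$ gives
\[
\mathrm{lct}_x(X,B;\mathfrak m_x)\ \ge\ \Bigl(\frac{\epsilon}{e(\mathfrak m_x)}\Bigr)^{1/d}.
\]
It therefore suffices to bound $e(\mathfrak m_x)=\mult_x X$ above in terms of $d$ and $\epsilon$ only; and this is itself a consequence of $\nvol(X\ni x,B)\ge\epsilon$. Indeed, by the stable degeneration theorem \cite{XZ22} the germ $(X\ni x,B)$ degenerates flatly and $\bQ$-Gorensteinly to a K-semistable log Fano cone of the same local volume; such cones of dimension $d$ with local volume $\ge\epsilon$ form a bounded family (a boundedness input underlying \cite{XZ24}), so their multiplicity at the vertex is bounded; and since Hilbert--Samuel multiplicity is upper semicontinuous in flat families, $e(\mathfrak m_x)\le M(d,\epsilon)$. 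Hence $\mathrm{lct}_x(X,B;\mathfrak m_x)\ge(\epsilon/M)^{1/d}=:t_0>0$.

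\emph{Step 2: from $\mathfrak m_x$ to a Cartier divisor.} Working on a quasi-projective neighbourhood of $x$, pick an ample Cartier divisor $L$ positive enough that the relevant Bertini statements for multiplier ideals of the pair $(X,B)$ are valid (in particular $L\otimes\mathfrak m_x$ globally generated), and let $D$ be a general member of the linear system $|L\otimes\mathfrak m_x|$. Then $D>0$ is an effective Cartier divisor passing through $x$. By the standard comparison of the log canonical threshold of an ideal with that of a general member of a sufficiently positive linear system cutting it out, $\mathrm{lct}_x(X,B;D)\ge\min\{1,\mathrm{lct}_x(X,B;\mathfrak m_x)\}$ near $x$; and away from $x$ such a general $D$ does not destroy the log canonical property, because $(X,B)$ is already klt. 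Consequently $\mathrm{lct}_x(X,B;D)\ge\min\{1,t_0\}=:t$, i.e.\ $(X\ni x,B+tD)$ is lc, with $t=t(d,\epsilon)$.

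\emph{Expected main obstacle.} Essentially all of the difficulty is concentrated in the multiplicity bound $\mult_x X\le M(d,\epsilon)$ of Step~1: this is the only place where the local volume hypothesis genuinely enters, and it rests on the deep boundedness of K-semistable log Fano cones with volume bounded away from zero (via \cite{XZ22,XZ24}). Granting that bound, the remainder is a light application of the normalized-multiplicity inequality together with a routine Bertini argument. It is worth stressing that no hypothesis on the coefficients of $B$ is used anywhere — matching the full generality of the statement — since the normalized-multiplicity inequality holds for arbitrary klt pairs. If one wished to avoid invoking the multiplicity bound, the same scheme works with the valuation ideal of a Koll\'ar component $E$ with $a(E,X,B)$ bounded (supplied, after a reduction to finite coefficients, by Theorem~\ref{thm: upper bound mldk}) in place of $\mathfrak m_x$; but the $\mathfrak m_x$-route above is shorter and coefficient-free.
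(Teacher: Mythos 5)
Your Step~2 is essentially fine, but Step~1 — the bound $\mult_x X\le M(d,\epsilon)$ — is where the argument has a genuine gap, and it is precisely the step you flag as containing ``essentially all of the difficulty,'' so it cannot be waved through. Two concrete problems. First, the boundedness of K-semistable log Fano cone singularities with $\nvol\ge\epsilon$ proved in \cite{XZ24} is established under a coefficient hypothesis on the boundary (finite, resp.\ DCC, coefficients), whereas Theorem~\ref{thm: Birkar conje lct nv} makes no assumption on the coefficients of $B$. The stable degeneration of $(X\ni x,B)$ produces a cone pair $(X_0,B_0;\xi)$ whose boundary $B_0$ inherits these arbitrary real coefficients, so you cannot invoke the cited boundedness to conclude that the $X_0$ form a bounded family; the claim that your route is ``coefficient-free'' is therefore misleading — the coefficient issue has just been pushed into an uncited boundedness statement. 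Second, the natural workaround — drop $B$ and use $\nvol(X\ni x)\ge\nvol(X\ni x,B)\ge\epsilon$ — requires $K_X$ to be $\Qq$-Cartier, which is not part of the hypotheses (only $K_X+B$ is $\Rr$-Cartier). Passing to a small $\Qq$-factorialization $X'\to X$ (as the paper does) restores $\Qq$-Gorensteinness but changes the germ and hence the multiplicity, so it does not rescue the $\mathfrak m_x$-route.

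For comparison, the paper's proof avoids any multiplicity bound entirely. It takes a small $\Qq$-factorialization $X'$, runs an MMP to make $-K_{X'}$ nef over $X$, uses the Cartier-index bound of \cite[Cor.\ 1.4]{XZ21} (valid with no coefficient assumption, since it is applied with trivial boundary) together with Fujino's effective base-point-freeness to produce a $\Qq$-divisor $D_1$ with bounded Cartier index, then applies the strictly-lc-complement result (Lemma~\ref{lem: strictly lc complement}, built on \cite{Bir21}) to produce $D_2$ with $N(K_X+D_1+D_2)$ Cartier and $\mld(X\ni x,D_1+D_2)=0$. The required Cartier divisor is $D:=ND_2$, and $\lct(X\ni x,B;D)$ is bounded below using the existence of a Kollár component $E$ with $\nvol(E,X')\le d^d+1$ (from \cite[Thm.\ 1.6]{LX19}) and \cite[Lem.\ 3.4]{Zhu24}. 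This is the complement-theoretic route you chose to ``sidestep.'' If you want to salvage your plan, you would first need to actually prove $\mult_x X\le M(d,\epsilon)$ for $\Qq$-Gorenstein klt germs (where the $B=0$ reduction is available) and then supply a reduction of the general germ to that case; the latter reduction is nontrivial and is essentially why the paper does not take your route.
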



We note that by the lower semi-continuity of local volumes (\cite[Theorem 1]{BL21} and \cite[Theorem 6.13]{LLX20}), $\nvol(X\ni x,B)\ge \epsilon$ implies that $(X,B)$ is $\frac{\epsilon}{d^d}$-lc near $x$. When the coefficients of $B$ are $\ge\epsilon$, by Theorem \ref{thm:existence of delta-plt blow-up}, $(X\ni x,B)$ admits a $\delta$-plt-blow-up, so we get \cite[Theorem 1.14]{Che24}.


\medskip

\noindent\textbf{Acknowledgement}. The authors would like to thank Chen Jiang, Fanjun Meng, Chenyang Xu, and Ziquan Zhuang for useful discussions, and Harold Blum and Yuchen Liu for helpful comments. The first author is affiliated with LMNS at Fudan University. The first author is supported by NSFC for Excellent Young Scientists (\#12322102), and the National Key Research and Development Program of China (\#2023YFA1010600, \#2020YFA0713200). The third author was partially supported by the NSF FRG grant DMS-2139613 and the NSF grant DMS-2201349 of Chenyang Xu. He would also like to thank the Isaac Newton Institute for Mathematical Sciences, Cambridge, for support and hospitality during the programme New equivariant methods in algebraic and differential geometry, where part of the work on this paper was undertaken. This work was partially supported by EPSRC grant EP/R014604/1.

The first author learned Conjecture \ref{conj: acc local volume}(1) from Yuchen Liu when he visited Yale University in October 2018, and he would like to thank their hospitality. 

The second author would like to thank Yuchen Liu for valuable discussions and constant support on this project and related K-stability questions. Although he tried to work out a K-stability problem with Yuchen at Northwestern University for almost three years without success, he finally managed to solve one in the last week when he was still technically affiliated with Northwestern and with Yuchen as the research mentor.

\section{Preliminaries}

We will freely use results on MMP in \cite{KM98,BCHM10}. We follow the definitions in \cite{CH21,XZ24,Zhu24} although the notation may have minor differences.

\begin{defn}
A \emph{pair} $(X/Z\ni z,B)$ consists of a contraction $X\rightarrow Z$ between normal quasi-projective varieties, a (not necessarily closed) point $z\in Z$, and an $\Rr$-divisor $B\geq 0$ such that $K_X+B$ is $\Rr$-Cartier over a neighborhood of $z$.

A divisor $E$ is called \emph{over $X/Z\ni z$} if $\pi(\Center_XE)=\overline{z}$. The \emph{log discrepancy} of a divisor $E$ over $X/Z\ni z$ is $1+\mult_E(K_Y+f^{-1}_*B-f^*(K_X+B))$ where $f: Y\rightarrow X$ is a birational morphism (equivalently, any birational morphism) such that $E$ is on $Y$, and is denoted by $a(E,X,B)$. The \emph{minimal log discrepancy} (mld) of $(X/Z\ni z,B)$ is
$$\mld(X/Z\ni z,B):=\inf_{E}\{a(E,X,B)\mid E\text{ is over }X/Z\ni z\}.$$
It is well-known that if $\mld(X/Z\ni z,B)\geq 0$ then the infimum in the above definition is a minimum (e.g. \cite[Lemma 3.5]{CH21}). We say that $(X/Z\ni z,B)$ is \emph{lc} (resp. $a$-\emph{lc}) if $\mld(X/Z\ni z,B)\geq 0$ (resp. $\geq a$). We say that $(X/Z\ni z,B)$ is \emph{$(a,\mathbb R)$-complementary} if $(X/Z\ni z,B+G)$ is $a$-lc and $K_X+B+G\sim_{\mathbb R}0$ over a neighborhood of $z$ for some $G\geq 0$. Such $(X/Z\ni z,B+G)$ is called an  \emph{$(a,\mathbb R)$-complement} of $(X/Z\ni z,B)$. 

For any $\Rr$-divisor $D\geq 0$ on $X$ that is $\Rr$-Cartier over a neighborhood of $z$, we denote by $\mult_ED:=\mult_E(f^*D-f^{-1}_*D)$ for any prime divisor $E$ over $X/Z\ni z$. The \emph{$a$-lc threshold} of $D$ with respect to $(X/Z\ni z,B)$ is
$$a\text{-}\lct(X/Z\ni z,B;D):=\sup\{t\geq 0\mid (X/Z\ni z,B+tD)\text{ is }a\text{-lc}\}.$$
The \emph{lc threshold} of $D$ with respect to $(X/Z\ni z,B)$ is $0\text{-}\lct(X/Z\ni z,B;D)$ and is also denoted by $\lct(X/Z\ni z,B;D)$.

If $X\rightarrow Z$ is the identity morphism then we may drop $Z$. A pair $(X\ni x,B)$ is called \emph{klt} if $a(E,X,B)>0$ for any $E$ over $X$ such that $\bar x\subset\Center_XE$. A pair $(X\ni x,B)$ is called a \emph{germ} if $x$ is a closed point. If $(X\ni x,B)$ is a (klt, lc, $a$-lc) pair for any $x\in X$ then we say that $(X,B)$ is a (klt, lc, $a$-lc) pair. A pair $(X,B)$ is called \emph{plt} (resp. $a$-plt) if $a(E,X,B)>0$ (resp. $>a$) for any prime divisor $E$ that is exceptional over $X$.

Let $(X,B)$ be a klt pair and $x$ a closed point. A \emph{plt blow-up} of $(X\ni x,B)$ is a blow-up $f: Y\rightarrow X$ with the exceptional divisor $E$ over $X\ni x$, such that $(Y,f^{-1}_*B+E)$ is plt near $E$ and $-E$ is ample$/X$, and $E$ is called a \emph{Koll\'ar component} of $(X\ni x,B)$. Plt blow-up always exists for klt pairs (cf. \cite[Lemma 1]{Xu14}). We denote by
$$\mld^{K}(X\ni x, B):=\min\{ a(E,X,B)\mid  E \text{ is a Koll\'ar component of }(X\ni x,B)\}.$$
The \emph{normalized volume} of $E$ with respect to $(X\ni x,B)$ is
$$\nvol(E,X,B):=a(E,X,B)\cdot\vol(-(K_Y+B_Y+E)|_E).$$
The \emph{local volume} of $(X\ni x,B)$ is
$$\nvol(X\ni x,B):=\inf_{E}\nvol(E,X,B)$$
where $E$ runs through all Koll\'ar components of $(X\ni x,B)$. If $(X\ni x,B)$ is not klt near $x$, then we define $\nvol(X\ni x,B):=0$. So when $\nvol(X\ni x,B)>0$, $(X\ni x,B)$ is automatically klt near $x$.

For any klt pair $(X\ni\eta,B)$, the \emph{local volume} of $(X\ni\eta,B)$ is $\nvol(X\ni\eta,B):=\nvol(X\ni x,B)$ where $x$ is a general closed point of $\overline{\eta}$ (cf. \cite[Definition 2.8]{Zhu24}), which is well-defined as $x\rightarrow\nvol(X\ni x,B)$ is constructible \cite[Theorem 1.3]{Xu20}.
\end{defn}

\begin{remark}
    By \cite[Theorem 1.3]{LX20}, the above definition for $\nvol$ is equivalent to the original and usual definition \cite{Li18}:
    \begin{equation}\label{eqn :nv def}
        \nvol(X\ni x,B)\coloneqq \inf_{v\in \mathrm{Val}_{X,x}} a(v,X,B)^d\cdot \vol_X(v).
    \end{equation}
    Here $\mathrm{Val}_{X,x}$ is the set of real valuations $v$ of $K(X)$ centered at $x\in X$, $a(v,X,B)$ is the log discrepancy of $v$ and $\vol_{X}(v)$ is the \emph{volume} of $v$ (computed with respect to $X$); for more details, see for example \cite{LX20}.

   We will not use the original definition, except for the proof of Theorem \ref{thm: Birkar conje lct nv}.
\end{remark}

\begin{defn}
Let $\Ii\subset\mathbb R$ be a set. For any $\Rr$-divisor $B=\sum_{i=1}^m b_iB_i$ on a normal variety $X$, where $B_i$ are irreducible components of $B$, we write $B\in\Ii$ if $b_i\in\Ii$ for each $i$. We define $||B||:=\max_{1\le i\le m}\{|b_i|\}$. If $B\in\Ii$, then for any function $p: \Ii\to \Rr$, we define $p(B):=\sum_{i=1}^m p(b_i)B_i$.

A function $p:\Ii\rightarrow\mathbb R$ is called a \emph{projection} if $p\circ p(\gamma)=p(\gamma)$ for any $\gamma\in\Ii$.
\end{defn}

We recall the following results from \cite{Zhu24}. Note that \cite{Zhu24} mainly considers structures with rational coefficients, while we need to consider structures with real coefficients. Nevertheless, these results still hold and can be proven either via the same lines of the proofs, or via perturbation. See \cite[Remark 3.9]{Zhu24} or \cite[Lemma 2.18]{Zhu23} for explanations.

\begin{lem}[{\cite[Lemma 2.11]{Zhu24}}]\label{lem: zhu24 2.11 real}
Let $d$ be a positive integer and $t$ a positive real number. Let $(X\ni x,B)$ be an lc germ of dimension $d$, and $D$ an $\Rr$-Cartier $\Rr$-divisor on $X$, such that $(X\ni x,B+(1+t)D)$ is lc. Then
$$\nvol(X\ni x,B+D)\geq\left(\frac{t}{1+t}\right)^{d}\nvol(X\ni x,B).$$
\end{lem}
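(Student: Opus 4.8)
The plan is to use the valuative formula \eqref{eqn :nv def} for $\nvol$ rather than Koll\'ar components, which converts the desired inequality into a pointwise comparison of log discrepancies of valuations. First, dispose of the degenerate case: if $(X\ni x,B)$ is not klt near $x$, then $\nvol(X\ni x,B)=0$ and the inequality is immediate from $\nvol\ge 0$; so assume $(X\ni x,B)$ is klt near $x$. Note that $(X\ni x,B+D)$ is then a genuine pair, since $K_X+B+D=(K_X+B)+D$ is $\Rr$-Cartier near $x$ and $B+D\ge 0$ (a prime component of $D$ with coefficient $-c<0$ would force the corresponding coefficient of $B$ to be $\ge(1+t)c>c$ because $B+(1+t)D\ge 0$).

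The core estimate is as follows. For a valuation $v\in\mathrm{Val}_{X,x}$, write $v(D)$ for the value of $v$ on the $\Rr$-Cartier divisor $D$; since $v$ is $\Rr$-linear on $\Rr$-Cartier divisors, $a(v,X,B+sD)=a(v,X,B)-s\,v(D)$ for every $s\in\Rr$. Recall the standard fact that lc-ness, though defined via divisorial valuations, yields $a(v,X,\Delta)\ge 0$ for \emph{every} $v\in\mathrm{Val}_{X,x}$ (e.g. via a log resolution; cf. \cite{LX20}). Applying this to $(X\ni x,B+(1+t)D)$ gives $a(v,X,B)-(1+t)v(D)\ge 0$, i.e. $v(D)\le\frac{1}{1+t}a(v,X,B)$, whence
$$a(v,X,B+D)=a(v,X,B)-v(D)\ \ge\ \Big(1-\tfrac{1}{1+t}\Big)a(v,X,B)\ =\ \tfrac{t}{1+t}\,a(v,X,B)\ \ge\ 0,$$
the last inequality since $(X\ni x,B)$ is lc. In particular $a(E,X,B+D)>0$ for every prime divisor $E$ over $X$ with $\bar x\subset\Center_X E$, so $(X\ni x,B+D)$ is klt and \eqref{eqn :nv def} applies to it as well.

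To conclude, recall that $\vol_X(v)$ depends only on $X$ and is $\ge 0$. Raising the displayed inequality to the $d$-th power (valid as both sides are $\ge 0$) and multiplying by $\vol_X(v)$ gives, for every $v\in\mathrm{Val}_{X,x}$,
$$a(v,X,B+D)^{d}\cdot\vol_X(v)\ \ge\ \Big(\tfrac{t}{1+t}\Big)^{d}a(v,X,B)^{d}\cdot\vol_X(v);$$
taking the infimum over $v$ and using \eqref{eqn :nv def} for both $(X\ni x,B+D)$ and $(X\ni x,B)$ yields the assertion. I do not anticipate a genuinely hard step here --- the mechanism is simply that $\vol_X(v)$ ignores the boundary while $a(v,X,-)$ is affine in it, so the hypothesis on $B+(1+t)D$ transfers directly to a uniform comparison of log discrepancies, and nothing in the argument uses rationality of coefficients. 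The only points deserving care are the reduction to the klt case (so \eqref{eqn :nv def} is available) and the passage of lc-ness from divisorial to arbitrary valuations; the Koll\'ar-component definition would instead require comparing a Koll\'ar component of $(X\ni x,B+D)$ with the volume of $(X\ni x,B)$, which is the step the valuative formulation circumvents.
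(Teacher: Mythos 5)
Your argument is correct and is the natural one. The paper does not supply its own proof of this lemma; it cites \cite[Lemma 2.11]{Zhu24} and remarks that the passage from rational to real coefficients is routine. Your proof via the valuative formula \eqref{eqn :nv def} is, in substance, the standard proof (and almost certainly the one in \cite{Zhu24}): the whole point is that $\vol_X(v)$ is independent of the boundary, while $A_{(X,B+sD)}(v)=A_{(X,B)}(v)-s\,v(D)$ is affine in $s$, so the lc hypothesis on $B+(1+t)D$ yields $v(D)\le\frac{1}{1+t}A_{(X,B)}(v)$ uniformly in $v$, and the normalized volume inequality follows by taking $d$-th powers and infima. A few minor remarks. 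First, the effectivity $B+D\ge 0$ can be seen even more directly as a convex combination, $B+D=\tfrac{t}{1+t}B+\tfrac{1}{1+t}\bigl(B+(1+t)D\bigr)$, avoiding the coefficientwise case check. Second, the one technical point you correctly flag — that lc-ness, defined over divisorial valuations, gives $A_{(X,\Delta)}(v)\ge 0$ for all $v\in\mathrm{Val}_{X,x}$, and that the affine formula $A_{(X,B+sD)}(v)=A_{(X,B)}(v)-s\,v(D)$ persists for non-divisorial $v$ — is standard for quasi-monomial valuations (work on a log resolution making everything SNC) and extends to general real valuations via retractions; alternatively, since the infimum in \eqref{eqn :nv def} may equally be taken over quasi-monomial or even divisorial valuations by density, one can sidestep the extension entirely. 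Note, incidentally, that this paper explicitly says it avoids the valuative definition except in the proof of Theorem \ref{thm: Birkar conje lct nv}; but since it offers no proof of this lemma, your valuative route is not in conflict with anything the authors actually do, and is the simplest available.
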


\begin{lem}[{\cite[Lemma 2.10]{Zhu24}}]\label{lem: zhu24 2.10 real}
Let $(X,B)$ and $(Y,B_Y)$ be two klt pairs and $f: Y\rightarrow X$ a proper birational morphism such that $K_Y+B_Y\leq f^*(K_X+B)$. Then for any closed points $x\in X$ and $y\in f^{-1}(x)$, 
$$\nvol(Y\ni y,B_Y)\geq\nvol(X\ni x,B).$$
\end{lem}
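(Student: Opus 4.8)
\emph{Strategy.} I would argue directly from the valuational description of the local volume in \eqref{eqn :nv def},
$$\nvol(X\ni x,B)=\inf_{v\in\mathrm{Val}_{X,x}}a(v,X,B)^{d}\cdot\vol_X(v),\qquad\nvol(Y\ni y,B_Y)=\inf_{v\in\mathrm{Val}_{Y,y}}a(v,Y,B_Y)^{d}\cdot\vol_Y(v),$$
so that it suffices to show: every valuation competing in the infimum on the $Y$-side also competes on the $X$-side, with a contribution that is no smaller there.

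\emph{Valuations centered at $y$ are centered at $x$.} Since $f$ is birational we identify $K(Y)=K(X)$. The morphism $f$ induces an injective local homomorphism $\mathcal O_{X,x}\hookrightarrow\mathcal O_{Y,y}$ carrying $\mathfrak m_x$ into $\mathfrak m_y$, because $f(y)=x$; hence any $v\in\mathrm{Val}_{Y,y}$ is nonnegative on $\mathcal O_{X,x}$ and positive on $\mathfrak m_x\setminus\{0\}$, i.e.\ $v\in\mathrm{Val}_{X,x}$. Thus $\mathrm{Val}_{Y,y}\subseteq\mathrm{Val}_{X,x}$. (For computing the infima it is in fact enough to consider divisorial $v$, i.e.\ Koll\'ar components, and a divisorial valuation centered at the closed point $y$ is likewise centered at the closed point $f(y)=x$.)

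\emph{Comparing the two ingredients.} Fix $v\in\mathrm{Val}_{Y,y}$ and set $F:=f^{*}(K_X+B)-(K_Y+B_Y)$, which is $\Rr$-Cartier and, by hypothesis, $\ge 0$. Since $K_Y+(B_Y+F)=f^{*}(K_X+B)$ is a crepant pullback, for a divisor $E$ over $Y\ni y$ we get $a(E,X,B)=a(E,Y,B_Y+F)=a(E,Y,B_Y)-\mult_E F\le a(E,Y,B_Y)$, because $\mult_E F\ge 0$; passing to the quasi-monomial/real case (or simply restricting to divisorial valuations, which suffice for the infimum) gives $a(v,X,B)\le a(v,Y,B_Y)$. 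On the other hand $\vol_X(v)=\vol_Y(v)$, since the volume of a valuation is unchanged when the ambient germ is replaced by a proper birational modification. Granting these, for every $v\in\mathrm{Val}_{Y,y}$
$$a(v,Y,B_Y)^{d}\cdot\vol_Y(v)\ \ge\ a(v,X,B)^{d}\cdot\vol_X(v)\ \ge\ \nvol(X\ni x,B),$$
and taking the infimum over $v\in\mathrm{Val}_{Y,y}$ yields $\nvol(Y\ni y,B_Y)\ge\nvol(X\ni x,B)$.

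\emph{Main obstacle.} Everything above is formal except the birational invariance $\vol_X(v)=\vol_Y(v)$, which is the only step requiring genuine input; I would obtain it from the colength definition $\vol_X(v)=\lim_m\frac{d!}{m^{d}}\dim_{\Cc}\bigl(\mathcal O_{X,x}/\{g:v(g)\ge m\}\bigr)$ combined with Izumi-type two-sided estimates that sandwich the valuation ideals in $\mathcal O_{X,x}$ and in $\mathcal O_{Y,y}$ against each other up to asymptotically negligible error, or else simply cite it from the normalized-volume literature, where it is used routinely. No additional trouble is caused by real coefficients: the crepant-pullback computation is valid over $\Rr$ verbatim, and alternatively the real case follows from the rational one by the perturbation argument indicated in the discussion preceding Lemma~\ref{lem: zhu24 2.11 real}.
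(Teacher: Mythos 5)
The paper does not prove this statement itself --- it is cited directly from \cite[Lemma 2.10]{Zhu24}, with a remark that the real-coefficient version follows by the same argument or by perturbation. Your valuational approach is in the spirit of that source.

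There is, however, a genuine error in the step you yourself flag as the main obstacle: the claimed \emph{equality} $\vol_X(v)=\vol_Y(v)$ is false. Take $X=\bA^2$ with origin $x$, $f:Y\to X$ the blow-up of $x$, $y$ a closed point on the exceptional curve, and $v$ the divisorial valuation obtained by blowing up $y$ on $Y$. In local coordinates $(u,w)$ on $Y$ with $s=u,\ t=uw$ on $X$, the valuation assigns $v(u)=v(w)=1$, hence $v(s)=1,\ v(t)=2$. A direct colength count gives $\vol_Y(v)=1$ while $\vol_X(v)=\tfrac12$. So there is no ``Izumi sandwich'' producing asymptotic equality, and birational invariance of the valuation volume fails.

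Fortunately, the proof only needs the one-sided inequality $\vol_X(v)\le\vol_Y(v)$, and this does hold: since $f(y)=x$, the inclusion $\mathcal O_{X,x}\hookrightarrow\mathcal O_{Y,y}$ identifies $\mathfrak a_m(v)\cap\mathcal O_{X,x}$ with the valuation ideal in $\mathcal O_{X,x}$, so $\mathcal O_{X,x}/\mathfrak a_m^X(v)\hookrightarrow\mathcal O_{Y,y}/\mathfrak a_m^Y(v)$ and hence the colengths compare in the stated direction, giving $\vol_X(v)\le\vol_Y(v)$ after normalizing. Combined with your (correct) crepant-pullback computation $a(v,X,B)\le a(v,Y,B_Y)$ and positivity of $a(v,X,B)$ (as $(X,B)$ is klt and $v$ is centered at the closed point $x$), one still gets
$$a(v,Y,B_Y)^d\vol_Y(v)\ \ge\ a(v,X,B)^d\vol_X(v)\ \ge\ \nvol(X\ni x,B)$$
for every $v\in\mathrm{Val}_{Y,y}\subseteq\mathrm{Val}_{X,x}$, and taking the infimum finishes the proof. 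So the argument is salvageable, but you should replace the claimed equality of volumes (and the appeal to a birational invariance that does not exist) by the one-sided colength inequality above.
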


\begin{thm}[{\cite[Theorem 1.3]{Zhu24}}]\label{thm: zhu24 1.3 real}
Let $d$ be a positive integer. Then there exists a positive real number $c=c(d)$ depending only on $d$ such that
$$\lct(X\ni x,B;B)\geq c\cdot\nvol(X\ni x,B)$$
for any $d$-dimensional klt germ $(X\ni x,B)$ such that $X\ni x$ is $\Qq$-Gorenstein.
\end{thm}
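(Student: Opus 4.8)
\textbf{Proof proposal for Theorem \ref{thm: zhu24 1.3 real}.}

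The plan is to reduce the statement to a bound purely in terms of a single well-chosen valuation and then feed in the known comparison between local volume and log-discrepancy data of a Koll\'ar component. First, I would recall the valuative definition \eqref{eqn :nv def} of $\nvol(X\ni x,B)$: since $X\ni x$ is $\Qq$-Gorenstein and $(X\ni x,B)$ is klt, we have $\nvol(X\ni x,B)=\inf_{v}a(v,X,B)^d\vol_X(v)$ over $v\in\mathrm{Val}_{X,x}$. The key point is that $\lct(X\ni x,B;B)$ is also a valuative quantity: $\lct(X\ni x,B;B)=\inf_{v}\frac{a(v,X,B)}{\mathrm{ord}_v(B)}$, where the infimum runs over divisorial (or all) valuations with $\mathrm{ord}_v(B)>0$. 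So the claim $\lct(X\ni x,B;B)\ge c\cdot\nvol(X\ni x,B)$ amounts to showing that for every valuation $v$ computing (or nearly computing) the lc threshold, one has $\frac{a(v,X,B)}{\mathrm{ord}_v(B)}\ge c\cdot\nvol(X\ni x,B)$.

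Next I would try to realize the lc threshold by a Koll\'ar component, or at least by a valuation controlled by one. Let $t_0:=\lct(X\ni x,B;B)$; then $(X\ni x,B+t_0B)=(X\ni x,(1+t_0)B)$ is lc but not klt near $x$, so there is a prime divisor $E$ over $X\ni x$ with $a(E,X,(1+t_0)B)=0$, i.e. $a(E,X,B)=t_0\,\mathrm{ord}_E(B)$. After running a suitable MMP / using the standard dlt-modification-to-Koll\'ar-component machinery (as in \cite{Xu14, LX20}), one can arrange that $E$ is a Koll\'ar component of $(X\ni x,B)$, or pass to one whose normalized volume bounds $\nvol(X\ni x,B)$ from above. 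For such $E$ we have $\nvol(X\ni x,B)\le \nvol(E,X,B)=a(E,X,B)\cdot\vol\!\left(-(K_Y+B_Y+E)|_E\right)$, so it suffices to show $\vol\!\left(-(K_Y+B_Y+E)|_E\right)\ge c/\mathrm{ord}_E(B)$, i.e. to lower-bound a volume on the $(d-1)$-dimensional component $E$ in terms of $\mathrm{ord}_E(B)$.

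The main obstacle, and the heart of the argument, is the volume lower bound $\vol\!\left(-(K_Y+B_Y+E)|_E\right)\gtrsim 1/\mathrm{ord}_E(B)$. I expect to handle this by noting that on $E$ we get an induced log Fano pair $(E,B_E)$ of dimension $d-1$ with $-(K_E+B_E)$ ample, together with the extra divisor class coming from $-E|_E$, which is also positive since $-E$ is ample$/X$; the coefficient bookkeeping ties $\mathrm{ord}_E(B)$ to how much of this positivity is "used up." Concretely, I would write $-(K_Y+B_Y+E)|_E$ as a positive combination involving $-E|_E$ with coefficient roughly $\mathrm{ord}_E(B)$ (via adjunction, $(K_Y+B_Y+E)|_E=K_E+\mathrm{Diff}$, and the discrepancy identity $a(E,X,B)=t_0\,\mathrm{ord}_E(B)$), and then invoke a boundedness/positivity estimate — either a direct intersection-theoretic bound using that $(-E|_E)^{d-1}$ is a positive integer when things are Cartier, or, in the general $\Qq$-Gorenstein case, the volume estimates already packaged in \cite{LX20, Zhu24}. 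An alternative, perhaps cleaner route is to bypass Koll\'ar components entirely: take the valuation $v=\mathrm{ord}_E$ from the lc threshold, use $a(v,X,B)=t_0\,\mathrm{ord}_v(B)$, and bound $\vol_X(v)$ directly, since $\nvol(X\ni x,B)\le a(v,X,B)^d\vol_X(v)=t_0^d\,\mathrm{ord}_v(B)^d\vol_X(v)$, which rearranges to $t_0\ge \nvol(X\ni x,B)^{1/d}\big/\big(\mathrm{ord}_v(B)\,\vol_X(v)^{1/d}\big)$; the task then becomes showing $\mathrm{ord}_v(B)^d\,\vol_X(v)$ is bounded above in terms of $\nvol$, which should follow from multiplicity-type estimates relating $\mathrm{ord}_v(B)$ to the volume. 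Either way, the delicate step is extracting the uniform constant $c(d)$, which I expect requires the full strength of the K-stability boundedness results (the finite generation of the minimizer and the boundedness of K-semistable log Fano cones from \cite{LX20, XZ22}) applied in dimension $d$ or $d-1$.
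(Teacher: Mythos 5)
This theorem is not proved in the paper: the paper explicitly recalls it from \cite[Theorem 1.3]{Zhu24} (with the remark that the real-coefficient version follows from the same arguments, or by perturbation, referring to \cite[Remark 3.9]{Zhu24} and \cite[Lemma 2.18]{Zhu23}). So there is no in-paper proof to compare against; what I can do is assess your proposal on its own merits.

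Your overall strategy --- realize the lc threshold by a Koll\'ar component $E$ of $(X\ni x,B)$ (using the Xu/Li--Xu degeneration machinery), then compare $\nvol(X\ni x,B)\le\nvol(E,X,B)$ with the identity $a(E,X,B)=t_0\,\mathrm{ord}_E(B)$ --- is reasonable and, as far as I can tell, in the same spirit as Zhuang's actual argument. However, there is a direction error in the reduction. From
\[
\nvol(X\ni x,B)\ \le\ \nvol(E,X,B)\ =\ a(E,X,B)\cdot\vol\!\bigl(-(K_Y+B_Y+E)|_E\bigr)
\ =\ t_0\,\mathrm{ord}_E(B)\cdot\vol\!\bigl(-(K_Y+B_Y+E)|_E\bigr),
\]
the conclusion $t_0\ge c\cdot\nvol(X\ni x,B)$ is equivalent to
\[
\mathrm{ord}_E(B)\cdot\vol\!\bigl(-(K_Y+B_Y+E)|_E\bigr)\ \le\ \tfrac{1}{c},
\]
i.e.\ you need an \emph{upper} bound $\vol\!\bigl(-(K_Y+B_Y+E)|_E\bigr)\le c^{-1}/\mathrm{ord}_E(B)$, not the lower bound ``$\ge c/\mathrm{ord}_E(B)$'' that you state. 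Consequently, the suggestion to use the positivity/integrality of $(-E|_E)^{d-1}$ (which gives a \emph{lower} bound when Cartier) is aimed in the wrong direction and cannot close this step.

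Your ``alternative cleaner route'' identifies the task correctly: writing $\nvol\le a(v)^d\vol_X(v)=t_0^d\,\mathrm{ord}_v(B)^d\vol_X(v)$ and rearranging, it suffices to bound $\mathrm{ord}_v(B)^d\vol_X(v)$ from above by a constant $C(d)$; this, combined with $\nvol\le d^d$, does yield $\lct\ge c(d)\cdot\nvol$. But this bound is the entire content of the theorem and is not obviously a ``multiplicity-type estimate.'' Note that the klt condition only gives $\mathrm{ord}_v(B)<a(v,X,0)$, and $a(v,X,0)^d\vol_X(v)$ is \emph{not} uniformly bounded above over all valuations $v\in\mathrm{Val}_{X,x}$ (its infimum is $\nvol(X\ni x)$, but individual terms are unbounded). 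So you cannot pass through that naive chain; you must use the special structure of the lct-computing $v$ (e.g.\ the Koll\'ar component realizing it, the adjunction structure on $E$, and boundedness inputs in dimension $d-1$). This is exactly the nontrivial part of Zhuang's proof, and your proposal leaves it open.
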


\begin{lem}\label{lem: xz21 1.4 nonclosed point}
    Let $d$ be a positive integer and $\epsilon$ a positive real number. Then there exists a positive integer $I$ depending only on $d$ and $\epsilon$ satisfying the following.

    Let $(X,B)$ be a pair of dimension $d$ and $W$ a proper closed subvariety of $X$, such that $\nvol(X\ni x,B)\geq\epsilon$ for any closed point $x\in W$. Then for any $\Qq$-Cartier Weil divisor $D$ on $X$ and any (not necessarily closed) point $\eta\in W$, $ID$ is Cartier near $\eta$.
\end{lem}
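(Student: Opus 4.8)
The plan is to reduce the statement about a general (possibly non-closed) point $\eta \in W$ to the statement about closed points, where one can invoke the lower bound on local volumes together with a Cartier-index bound coming from boundedness of klt singularities with local volume bounded away from zero. First I would recall that by the main boundedness result underlying \cite{XZ24} (or \cite[Theorem 1.3]{Xu20} combined with \cite[Theorem 1.2]{XZ24}), there is a positive integer $I = I(d,\epsilon)$ such that for every $d$-dimensional klt germ $(X' \ni x', B')$ with $\nvol(X' \ni x', B') \ge \epsilon$, the local Cartier index of any $\Qq$-Cartier Weil divisor at $x'$ divides $I$; more precisely, $I \cdot (K_{X'} + B')$ and $I$ times any $\Qq$-Cartier Weil divisor is Cartier near $x'$. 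This is the uniform statement we want to propagate from closed points to $\eta$.

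Next I would argue that Cartierness near $\eta$ is detected on closed points. Let $D$ be a $\Qq$-Cartier Weil divisor on $X$. The locus $U_D \subset X$ where $ID$ is Cartier is open, so $ID$ is Cartier near $\eta$ if and only if $U_D$ contains $\eta$, equivalently $U_D \supseteq \overline{\{\eta\}} \cap (\text{some open nbhd})$, equivalently $U_D$ contains a general closed point of $\overline{\{\eta\}}$ — but actually I need it to contain $\eta$ itself, so the correct route is: $ID$ is Cartier near $\eta$ iff $ID$ is Cartier at \emph{every} closed point in some neighborhood of $\eta$. Since $\overline{\{\eta\}} \subseteq W$ (as $W$ is closed and $\eta \in W$), every closed point $x$ of $\overline{\{\eta\}}$ lies in $W$, hence satisfies $\nvol(X \ni x, B) \ge \epsilon$ by hypothesis. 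Therefore $ID$ is Cartier at each such $x$ by the Cartier-index bound above, and moreover $I(K_X+B)$ is Cartier near each such $x$ as well, so $(X\ni x, B)$ is in particular $\Qq$-Gorenstein with index dividing $I$.

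To conclude that $ID$ is Cartier near $\eta$ (not merely at the closed points of $\overline{\{\eta\}}$), I would use the following: the non-Cartier locus of the reflexive sheaf $\Oo_X(ID)$ is a closed subset $Z \subseteq X$; it does not meet any closed point of $\overline{\{\eta\}}$, hence $Z \cap \overline{\{\eta\}}$ is a closed subset of $\overline{\{\eta\}}$ containing no closed point, so it is empty; thus $\eta \notin Z$, i.e. $ID$ is Cartier near $\eta$. The same argument shows $I(K_X+B)$ is Cartier near $\eta$, so the hypothesis that $K_X+B$ be $\Rr$-Cartier near $\eta$ (built into the definition of ``pair'') is consistent. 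The main obstacle in this argument is establishing the uniform Cartier-index bound $I(d,\epsilon)$ in the first paragraph: this is where boundedness of the relevant klt singularities (via \cite{XZ24}, building on the finite generation and K-semistable degeneration theory) is essential, and one must be careful that the bound is uniform over all $D$ and depends only on $(d,\epsilon)$, not on $X$ or the particular singularity. Once that input is in place, the descent from closed points to $\eta$ is the elementary topological argument above.
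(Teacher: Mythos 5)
Your proof takes essentially the same two-step approach as the paper: first invoke a uniform Cartier-index bound at closed points with local volume bounded away from zero, then use that the non-Cartier locus of $\cO_X(ID)$ is closed to descend the conclusion to the (possibly non-closed) point $\eta$. The one place you hedge --- the exact reference for the closed-point Cartier-index bound --- is actually \cite[Corollary 1.4]{XZ21} (not a combination of \cite[Theorem 1.3]{Xu20} and \cite[Theorem 1.2]{XZ24}, which give constructibility and discreteness of $\nvol$, not directly a Cartier-index bound); with that citation fixed your argument matches the paper's, and indeed makes explicit the topological deduction that the paper compresses into a single ``Therefore.''
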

\begin{proof}
    By \cite[Corollary 1.4]{XZ21}, there exists a positive integer $I$ depending only on $d$ and $\epsilon$, such that $ID$ is Cartier near $x$ for any $\Qq$-Cartier Weil divisor $D$ on $X$ and any closed point $x\in W$. Therefore, $ID$ is Cartier near $\eta$ for any $\Qq$-Cartier Weil divisor $D$ on $X$ and any (not necessarily closed) point $\eta\in W$. 
\end{proof}

\begin{cor}\label{cor: nv lower bound imply mld}
Let $d$ be a positive integer and $\epsilon$ a positive real number. Then there exists a positive integer $l$ depending only on $d$ and $\epsilon$ satisfying the following.

Assume that $(X\ni x,B)$ is a klt germ of dimension $d$ such that $\nvol(X\ni x,B)\geq\epsilon$. Then $\mld(X\ni x,B)\leq l$.  
\end{cor}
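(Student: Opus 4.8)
\textbf{Proof proposal for Corollary \ref{cor: nv lower bound imply mld}.}

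The plan is to bound the mld from above by exhibiting a single divisor over $X \ni x$ whose log discrepancy is controlled purely in terms of $d$ and $\epsilon$. The most natural candidate is a Koll\'ar component $E$ achieving $\mld^K(X\ni x,B)$: since $\mld(X\ni x,B) \le \mld^K(X\ni x,B)$ (the infimum defining the mld is over all divisors, not just Koll\'ar components), it suffices to bound $a(E,X,B)$ for such an $E$. First I would recall the defining inequality for the normalized volume: for the minimizing Koll\'ar component $E$ with $f\colon Y \to X$ the associated plt blow-up, we have
\[
\nvol(X\ni x,B) \le \nvol(E,X,B) = a(E,X,B)\cdot \vol\bigl(-(K_Y+B_Y+E)|_E\bigr).
\]
So $a(E,X,B) \ge \nvol(X\ni x,B)/\vol(-(K_Y+B_Y+E)|_E) \ge \epsilon / \vol(-(K_Y+B_Y+E)|_E)$, which gives a \emph{lower} bound on $a(E,X,B)$ — the wrong direction. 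Instead I would run the argument the other way: pick \emph{any} Koll\'ar component $E_0$ of $(X\ni x,0)$ (which exists by \cite[Lemma 1]{Xu14} since $X\ni x$ is klt, as $\nvol>0$), so that $a(E_0,X,B) \le a(E_0,X,0)$; the problem reduces to bounding $a(E_0,X,0)$ for a \emph{cleverly chosen} $E_0$.

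Here is the cleaner route. By the lower semicontinuity of local volumes (\cite[Theorem 1]{BL21}, \cite[Theorem 6.13]{LLX20}), the hypothesis $\nvol(X\ni x,B)\ge\epsilon$ forces $(X,B)$ to be $\frac{\epsilon}{d^d}$-lc near $x$ — this is exactly the remark made just before the Preliminaries section in the excerpt. In particular $(X\ni x,0)$ is $\frac{\epsilon}{d^d}$-lc, so $X\ni x$ has klt (indeed $\frac{\epsilon}{d^d}$-lc) singularities. Now invoke \cite[Corollary 1.4]{XZ21} as packaged in Lemma \ref{lem: xz21 1.4 nonclosed point}: there is a positive integer $I = I(d,\epsilon)$ such that $I K_X$ is Cartier near $x$. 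Take a plt blow-up $f\colon Y \to X$ of $(X\ni x,0)$ with Koll\'ar component $E$; then $I(K_Y + E) \le f^*(I K_X) + I(1 + a(E,X,0) - 1)E$ wait — more directly, $a(E,X,0) = 1 + \mathrm{mult}_E(K_Y - f^*K_X)$, and since $I K_X$ is Cartier, $I \cdot \mathrm{mult}_E(K_Y - f^*K_X) = \mathrm{mult}_E(I K_Y - f^*(I K_X))$ is an integer; combined with $a(E,X,0) > 0$ this gives $a(E,X,0) \ge 1/I$, again the wrong direction. The honest difficulty is producing an \emph{upper} bound, and for that one needs a Koll\'ar component of bounded "depth."

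The step I expect to be the main obstacle — and the one I would lean on a cited boundedness result for — is exactly this: showing that among all Koll\'ar components of $(X\ni x,0)$ there is one with $a(E,X,0)$ bounded above in terms of $d,\epsilon$. This follows from the boundedness of $\epsilon'$-lc klt singularities with $\nvol$ bounded below: by \cite{XZ21} (see also \cite{Xu20}), the set of klt germs $(X\ni x)$ with $\nvol(X\ni x,0)\ge\epsilon$ — or more precisely the minimizing degenerations — lies in a bounded family, and boundedness of the total space yields a uniform bound on the log discrepancy of some Koll\'ar component (e.g. one can take $E$ with $\mathrm{mult}_E \mathfrak{m}_x = 1$ after a bounded sequence of blow-ups, or use that a bounded $X\ni x$ admits a Koll\'ar component of bounded discrepancy). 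Concretely, I would argue: the normalized volume lower bound together with \cite[Theorem 1.3]{Xu20} and \cite[Corollary 1.4]{XZ21} implies $X\ni x$ is a klt singularity of index $\le I(d,\epsilon)$ and bounded embedding complexity, hence by \cite[Lemma 3.1 or similar]{HLQ23}-type arguments there is a Koll\'ar component $E$ with $a(E,X,0)\le l(d,\epsilon)$; then $\mld(X\ni x,B) \le a(E,X,B) \le a(E,X,0) \le l$. Filling in the precise citation for "bounded singularity $\Rightarrow$ Koll\'ar component of bounded log discrepancy" is the crux, and I would expect the paper to either cite \cite{HLQ23} directly (where the $\Qq$-factorial bounded case of the parallel mld statement is handled) or to extract it from the boundedness statements in \cite{XZ21, Xu20}.
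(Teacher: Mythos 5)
Your overall strategy — bound $\mld(X\ni x,B)$ by the log discrepancy of a Koll\'ar component with no boundary — is the right one and matches the paper's plan in spirit. But there are two genuine gaps.

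First, you work with $(X\ni x,0)$ directly, claiming that $\nvol(X\ni x,B)\geq\epsilon$ makes $(X\ni x,0)$ an $\frac{\epsilon}{d^d}$-lc germ and then taking a Koll\'ar component of $(X\ni x,0)$. This requires $K_X$ to be $\Rr$-Cartier near $x$, which is not part of the hypothesis: we only know $K_X+B$ is $\Rr$-Cartier. The semicontinuity remark gives that $(X,B)$ is $\frac{\epsilon}{d^d}$-lc, but says nothing about $(X,0)$; you cannot even evaluate $a(E,X,0)$ or invoke \cite[Lemma 1]{Xu14} for the boundary-free germ without $\Rr$-Cartierness of $K_X$. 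The paper resolves this by first passing to a small $\Qq$-factorialization $f\colon Y\to X$, writing $K_Y+B_Y=f^*(K_X+B)$ and working with $(Y\ni y)$ for a closed point $y\in f^{-1}(x)$; since $Y$ is $\Qq$-factorial, $K_Y$ is $\Qq$-Cartier, and Lemma \ref{lem: zhu24 2.10 real} transports the lower bound to $\nvol(Y\ni y)\geq\nvol(Y\ni y,B_Y)\geq\nvol(X\ni x,B)\geq\epsilon$. At the end one recovers $\mld(X\ni x,B)=\mld(Y/X\ni x,B_Y)\leq\mld(Y/X\ni x)\leq\mld(Y\ni y)$.

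Second, and more importantly, you correctly identify the crux — producing a Koll\'ar component of $(Y\ni y,0)$ with $a(E,Y,0)$ bounded by $d,\epsilon$ — but you do not locate the actual tool. You speculate this comes from boundedness statements in \cite{XZ21,Xu20} or the $\Qq$-factorial bounded case in \cite{HLQ23}. None of those suffices here: what the paper uses is the recent \cite[Theorem 1.3]{XZ24}, which states precisely that $\mld^K(Y\ni y)\leq l(d,\epsilon)$ whenever $\nvol(Y\ni y)\geq\epsilon$, with no boundedness or $\Qq$-factoriality hypothesis on the ambient germ beyond what follows from the volume bound. This is the key new ingredient that unlocks the corollary, and the sequence of "wrong direction" attempts in your write-up is symptomatic of trying to rederive it from weaker, older inputs. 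Once you cite \cite[Theorem 1.3]{XZ24}, everything else in your sketch collapses to two lines and coincides with the paper's argument.
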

\begin{proof}
   Let $f: Y\rightarrow X$ be a small $\Qq$-factorialization, $K_Y+B_Y:=f^*(K_X+B)$, and $y\in f^{-1}(x)$ a closed point. By Lemma \ref{lem: zhu24 2.10 real},
   $$\nvol(Y\ni y)\geq\nvol(Y\ni y,B_Y)\geq\nvol(X\ni x,B)\geq\epsilon.$$
   By \cite[Theorem 1.3]{XZ24}, there exists  a positive integer $l$ depending only on $d$ and $\epsilon$, such that $\mld(Y\ni y)\leq\mld^K(Y\ni y)\leq l$. Thus
   $$\mld(X\ni x,B)=\mld(Y/X\ni x,B_Y)\leq\mld(Y/X\ni x)\leq\mld(Y\ni y)\leq l.$$
\end{proof}

\section{Singularities with local volumes bounded away from zero}\label{sec: acc mld}

\subsection{Lower bound of minimal log discrepancies}

\begin{lem}\label{lem: qfact nv to mld lower bound}
     Let $d$ be a positive integer and $\epsilon$ a positive real number. Then there exists a positive real number $\epsilon_0$ depending only on $d$ and $\epsilon$ satisfying the following. Assume that
    \begin{enumerate}
        \item $(X,B)$ is a klt pair of dimension $d$,
        \item the coefficients of $B$ are $\ge \epsilon$,
        \item $W$ is a proper closed subvariety of $X$ such that $X$ is $\Qq$-factorial near $W$, and
        \item for any closed point $x\in W$, $\nvol(X\ni x,B)\geq\epsilon$.
    \end{enumerate}
    Then for any (not necessarily closed) point $\eta\in W$, $\mld(X\ni\eta,B)\geq\epsilon_0$.
\end{lem}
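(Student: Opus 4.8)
The plan is to combine the lower semicontinuity/constructibility properties of $\nvol$ with the effective Cartier index bound in Lemma~\ref{lem: xz21 1.4 nonclosed point}, and then reduce an mld lower bound to a statement about bounded-index singularities. First I would reduce to the case where $\eta$ is a closed point: since $\nvol(X\ni \eta, B) = \nvol(X\ni x, B)$ for a general closed point $x\in\overline{\eta}$, and one expects $\mld(X\ni\eta,B)\ge\mld(X\ni x,B)$ for such general $x$ (the mld can only drop under specialization along the subvariety), it suffices to bound $\mld(X\ni x,B)$ from below for closed points $x\in W$. So fix a closed point $x\in W$ with $\nvol(X\ni x,B)\ge\epsilon$.

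Next I would extract the key geometric inputs. By Lemma~\ref{lem: xz21 1.4 nonclosed point}, there is a positive integer $I=I(d,\epsilon)$ such that $I(K_X+B')$ is Cartier near $x$ for the $\Qq$-divisorial part; more precisely $I K_X$ and $I B_i$ are Cartier near $x$ for each prime component $B_i$ — this gives that the Cartier index of $X$ near $x$ is bounded. Combined with hypothesis (2), that the coefficients of $B$ are $\ge\epsilon$, and the semicontinuity bound $\nvol\ge\epsilon\Rightarrow(X,B)$ is $\tfrac{\epsilon}{d^d}$-lc near $x$ (as recalled in the introduction via \cite{BL21,LLX20}), we have: $(X\ni x,B)$ is an $\tfrac{\epsilon}{d^d}$-lc germ of dimension $d$ with bounded Cartier index $I$ and with coefficients of $B$ in the DCC (indeed $\ge\epsilon$) set. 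The $\Qq$-factoriality hypothesis (3) is used here so that $B$ itself is $\Qq$-Cartier and the index bound applies to the boundary.

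Finally, I would invoke the known ACC/lower-bound results for mlds under a bounded Cartier index: for $d$-dimensional $a$-lc germs with Cartier index bounded by $I$ and coefficients in a fixed DCC set, the mld takes values in a set satisfying the ACC and bounded below by a positive constant depending only on $d, I$, and the coefficient set — this is essentially the bounded-index case of the ACC conjecture for mlds, available in the literature (e.g.\ via \cite{Kaw23,NS22a,NS22b,HLL22}), or alternatively via a direct argument using that $\mld$ of a bounded-index singularity, if positive, lies in $\tfrac{1}{I}\bZ_{>0}$ plus contributions from coefficients, hence is bounded below once we know it is positive, which follows from $\tfrac{\epsilon}{d^d}$-lc-ness. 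Packaging these gives $\mld(X\ni x,B)\ge\epsilon_0$ for $\epsilon_0=\epsilon_0(d,\epsilon)$, and by the first reduction the same bound holds at every point $\eta\in W$.

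The main obstacle I anticipate is making the reduction to bounded Cartier index genuinely \emph{uniform} and then citing a clean lower-bound-for-mld statement in exactly the right generality (real coefficients, germ at a closed point, coefficients only $\ge\epsilon$ rather than in a finite set); one has to be careful that the relevant ACC-for-mld input is available without circularity, since Theorem~\ref{thm: acc mld nv lower bound no l} in the introduction is precisely the harder global statement this lemma feeds into. I expect the resolution is that only the \emph{bounded-index} case of ACC for mlds is needed here, which is genuinely known, whereas the full Theorem~\ref{thm: acc mld nv lower bound no l} additionally has to handle unbounded index — so the logical dependency is safe.
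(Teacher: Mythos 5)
Your reduction to closed points is where the argument breaks, and it breaks in a fundamental way. You claim $\mld(X\ni\eta,B)\ge\mld(X\ni x,B)$ for a general closed point $x\in\overline{\eta}$, "since the mld can only drop under specialization." This inequality is \emph{backwards}: by Ambro's precise adjunction formula (\cite[Proposition 2.3]{Amb99}, which the paper itself invokes in the proof of Theorem~\ref{thm: acc mld nv lower bound no l}), for a general closed point $x\in\overline{\eta}$ one has
\[
\mld(X\ni\eta,B)+\dim\overline{\eta}=\mld(X\ni x,B),
\]
so the mld at the non-closed point is \emph{smaller} by $\dim\overline{\eta}$, which can be as large as $d-1$. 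A lower bound $\mld(X\ni x,B)\ge\tfrac{\epsilon}{d^d}$ at closed points therefore only gives $\mld(X\ni\eta,B)\ge\tfrac{\epsilon}{d^d}-\dim\overline{\eta}$, which is useless (it is typically negative, and while klt-ness saves positivity, you get no \emph{uniform} $\epsilon_0$). Once the reduction fails, the rest of your argument has no foothold, because the bounded-index ACC input you want to invoke is itself tailored to mlds at closed points or a fixed point, and the $\tfrac{\epsilon}{d^d}$-lc bound you cite is a closed-point statement as well. (A secondary remark: even granting a correct reduction, the $\tfrac{\epsilon}{d^d}$-lc bound at closed points would finish the lemma outright, so your appeal to bounded-index ACC for mlds would be superfluous; also, for real coefficients merely lying in $[\epsilon,1]$, values of $a(E,X,B)$ with $a(E,X,0),\mult_E B_i\in\tfrac{1}{I}\bZ$ do not lie in any discrete set, so the "bounded from below once positive" heuristic is not by itself valid.)

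The paper avoids the reduction entirely and works directly at $\eta$. Two inputs: Lemma~\ref{lem: xz21 1.4 nonclosed point} gives a uniform Cartier index $I$ near $\eta$ (this is the bounded-index input you correctly identified), and Theorem~\ref{thm: zhu24 1.3 real} gives a constant $c=c(d)$ with $\lct(X\ni x,B;B)\ge c\cdot\nvol(X\ni x,B)$, hence $(X\ni x,(1+c)B)$ lc for all closed $x\in W$, hence $(X\ni\eta,(1+c)B)$ lc. If $B=0$ near $\eta$ the bounded index alone gives $\mld(X\ni\eta,B)\ge 1/I$. Otherwise pick a component $S$ of $B$ through $\eta$: coefficients $\ge\epsilon$ give that $(X\ni\eta,B+c\epsilon S)$ is lc; any $E$ computing $\mld(X\ni\eta,B)$ has $\Center_X E=\overline{\eta}\subset\Supp S$, so $IS$ Cartier near $\eta$ forces $\mult_E S\ge 1/I$; then
\[
0\le a(E,X,B+c\epsilon S)=a(E,X,B)-c\epsilon\,\mult_E S\le\mld(X\ni\eta,B)-\frac{c\epsilon}{I},
\]
giving $\epsilon_0=\tfrac{c\epsilon}{I}$. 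The key ingredient your proposal misses is Theorem~\ref{thm: zhu24 1.3 real}; without it, there is no way to control log discrepancies at a genuinely non-closed $\eta$ from the closed-point $\nvol$ hypothesis.
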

\begin{proof}
By Lemma \ref{lem: xz21 1.4 nonclosed point}, there exists a positive integer $I$ depending only on $d$ and $\epsilon$, such that for any (not necessarily closed) point $\eta\in W$ and any Weil divisor $D$ on $X$, $ID$ is Cartier near $\eta$. If $B=0$ near $\eta$ then $\mld(X\ni\eta,B)\geq\frac{1}{I}$ and we are done, so we may assume that $B\not=0$ near $\eta$.

By Theorem \ref{thm: zhu24 1.3 real}, there exists a positive real number $c$ depending only on $d$ and $\epsilon$ such that $(X\ni x,(1+c)B)$ is lc for any closed point $x\in W$. Thus $(X\ni \eta,(1+c)B)$ is lc for any (not necessarily closed) point $\eta\in W$. 

Let $S$ be a component of $B$ which passes through $\eta$. Since $(X\ni \eta,(1+c)B)$ is lc and $\mult_SB\geq\epsilon$,  $(X\ni\eta,B+c\epsilon S)$ is lc. Let $E$ be a divisor over $X\ni\eta$ such that $a(E,X,B)=\mld(X\ni \eta,B)$. Since $IS$ is Cartier near $x$, $\mult_ES\geq\frac{1}{I}$.
Then
$$0\leq a(E,X,B+c\gamma_0S)=a(E,X,B)-c\gamma_0\mult_ES\leq\mld(X\ni\eta,B)-\frac{c\epsilon}{I},$$
so we may take $\epsilon_0=\frac{c\epsilon}{I}$.
\end{proof}

\begin{lem}\label{lem: closed point nv lower bound implies mld lower bound}
      Let $d$ be a positive integer and $\epsilon$ a positive real number. Then there exists a positive real number $\epsilon_0$ depending only on $d$ and $\epsilon$ satisfying the following. 

      Assume that $(X\ni x,B)$ is a klt germ of dimension $d$ such that the coefficients of $B$ are $\ge \epsilon$ and $\nvol(X\ni x,B)\geq\epsilon$. Then $\mld(X\ni x,B)\geq\epsilon_0$.
\end{lem}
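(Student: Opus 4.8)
The plan is to deduce the statement from the $\Qq$-factorial case already established in Lemma~\ref{lem: qfact nv to mld lower bound}, by passing to a small $\Qq$-factorialization.

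Concretely, I would first take a small $\Qq$-factorialization $f\colon Y\to X$ of $(X\ni x,B)$ (it exists by \cite{BCHM10}) and set $B_Y:=f^{-1}_*B$. Since $f$ is small and birational, $K_Y+B_Y=f^*(K_X+B)$; hence $(Y,B_Y)$ is a klt pair of dimension $d$, $Y$ is $\Qq$-factorial, and the coefficients of $B_Y$ coincide with those of $B$, so they are all $\geq\epsilon$. Moreover $W:=f^{-1}(x)$ is a proper closed subset of $Y$, and for every closed point $y\in W$ Lemma~\ref{lem: zhu24 2.10 real} gives $\nvol(Y\ni y,B_Y)\geq\nvol(X\ni x,B)\geq\epsilon$.

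Next I would apply Lemma~\ref{lem: qfact nv to mld lower bound} to the pair $(Y,B_Y)$, taking each irreducible component of $W$ as the proper closed subvariety required there (this is where $\Qq$-factoriality of $Y$ is used). This yields a positive real number $\epsilon_0$, depending only on $d$ and $\epsilon$, such that $\mld(Y\ni\eta,B_Y)\geq\epsilon_0$ for every (not necessarily closed) point $\eta\in W$. Finally, since $K_Y+B_Y=f^*(K_X+B)$ we have $\mld(X\ni x,B)=\mld(Y/X\ni x,B_Y)$, and a standard argument --- the center on $Y$ of any prime divisor over $Y/X\ni x$ is contained in $W$, so one reduces to the generic point of that center --- shows $\mld(Y/X\ni x,B_Y)=\inf_{\eta\in W}\mld(Y\ni\eta,B_Y)$. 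Combining this with the previous step gives $\mld(X\ni x,B)\geq\epsilon_0$, as desired.

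I do not expect a genuine obstacle here: the argument is essentially bookkeeping on top of Lemma~\ref{lem: qfact nv to mld lower bound}, whose proof carries the real content (the bounded $\Qq$-Gorenstein index of \cite{XZ21} together with the lower bound for $\lct(X\ni x,B;B)$ of \cite{Zhu24}). The only minor points needing care are that $B_Y$ inherits the coefficient lower bound $\geq\epsilon$ and that $\Qq$-factoriality is restored on $Y$ --- both immediate from the smallness of $f$ --- and the identification $\mld(X\ni x,B)=\mld(Y/X\ni x,B_Y)$, which is the usual crepant invariance of minimal log discrepancies.
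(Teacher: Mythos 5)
Your proposal is correct and matches the paper's own proof essentially line for line: pass to a small $\Qq$-factorialization, transport the local volume lower bound via Lemma~\ref{lem: zhu24 2.10 real}, invoke Lemma~\ref{lem: qfact nv to mld lower bound} on the fiber over $x$, and conclude with the crepant identification $\mld(X\ni x,B)=\inf_{\eta\in f^{-1}(x)}\mld(Y\ni\eta,B_Y)$. The only cosmetic difference is that you spell out applying the $\Qq$-factorial lemma component-by-component of $f^{-1}(x)$, a bookkeeping detail the paper leaves implicit.
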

\begin{proof}
Let $f: Y\rightarrow X$ be a small $\Qq$-factorialization near $x$ and let $K_Y+B_Y:=f^*(K_X+B)$. Then $(Y,B_Y)$ is klt, the coefficients of $B_Y$ are $\ge \epsilon$, and $Y$ is $\Qq$-factorial near $f^{-1}(x)$. By Lemma \ref{lem: zhu24 2.10 real}, for any closed point $y\in f^{-1}(x)$, $\nvol(Y\ni y,B_Y)\geq\nvol(X\ni x,B)\geq\epsilon$. By Lemma \ref{lem: qfact nv to mld lower bound}, for any (not necessarily closed) point $\eta\in f^{-1}(x)$, $\mld(Y\ni\eta,B_Y)\geq\epsilon_0$. Thus
$$\mld(X\ni x,B)=\inf_{\eta\in f^{-1}(x)}\mld(Y\ni\eta,B_Y)\geq\epsilon_0.$$
\end{proof}

\subsection{ACC for minimal log discrepancies}

\begin{lem}\label{lem: acc mld nv lower bound q factorial}
    Let $d$ be a positive integer, $\epsilon,l$ two positive real numbers, and $\Ii\subset [0,1]$ a DCC set. Then there exists an ACC set $\MLD(d,\Ii,\epsilon,l)\subset [0,l]$ depending only on $d,\epsilon,l$ and $\Ii$ satisfying the following. Assume that $X$ is a normal variety, $\eta\in X$ is a (not necessarily closed) point, and $B\geq 0$ is an $\Rr$-divisor, such that
    \begin{enumerate}
        \item $(X,B)$ is $\Qq$-factorial klt near $W:=\overline{\eta}$,
        \item $\dim X=d$ and $B\in\Ii$,
        \item $\nvol(X\ni x,B)\geq\epsilon$ for any closed point $x\in W$, and
        \item $\mld(X\ni\eta,B)\leq l$.
    \end{enumerate}
    Then $\mld(X\ni \eta,B)\in\MLD(d,\Ii,\epsilon,l)$.
\end{lem}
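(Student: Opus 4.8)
The plan is to argue by contradiction. Suppose the collection of all numbers $\mld(X\ni\eta,B)$ arising from data as in (1)--(4) is not ACC. Then we may choose such data $(X_i\ni\eta_i,B_i)$, put $W_i:=\overline{\eta_i}$, and arrange that $a_i:=\mld(X_i\ni\eta_i,B_i)$ is strictly increasing (so in particular $a_i\le l$ by (4)). As each $(X_i,B_i)$ is klt, hence lc, at $\eta_i$, the mld is a minimum: there is a prime divisor $E_i$ over $X_i\ni\eta_i$ with $a(E_i,X_i,B_i)=a_i$. Write $B_i=\sum_j b_{ij}B_{ij}$ with the $B_{ij}$ prime and $b_{ij}\in\Ii$.

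Next I would extract a uniform Cartier index. By Lemma \ref{lem: xz21 1.4 nonclosed point} there is $I=I(d,\epsilon)\in\ZZ_{>0}$ with $ID$ Cartier near $\eta_i$ for every Weil divisor $D$ on $X_i$; since $X_i$ is $\QQ$-factorial near $W_i$ this applies to $K_{X_i}$ and to every $B_{ij}$. Pulling back along a log resolution over a neighborhood of $\eta_i$ then gives $a(E_i,X_i,0)\in\tfrac1I\ZZ_{>0}$ and $\mult_{E_i}B_{ij}\in\tfrac1I\ZZ_{\ge0}$. Also, the positive elements of a DCC subset of $[0,1]$ are bounded below by some $b_{\min}=b_{\min}(\Ii)>0$, so every coefficient $b_{ij}$ that really occurs is $\ge b_{\min}$.

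The crucial point is an \emph{upper} bound for $a(E_i,X_i,0)$. By Theorem \ref{thm: zhu24 1.3 real} and hypothesis (3), there is $c_0=c_0(d,\epsilon)>0$ such that $(X_i\ni x,(1+c_0)B_i)$ is lc for every closed point $x\in W_i$; since $E_i$ has center $W_i$, this yields $a(E_i,X_i,(1+c_0)B_i)\ge0$. Setting $s_i:=\sum_j b_{ij}\mult_{E_i}B_{ij}=a(E_i,X_i,0)-a_i$, this inequality reads $a(E_i,X_i,0)\ge(1+c_0)s_i$, which together with $a_i\le l$ yields
$$a(E_i,X_i,0)\le L:=\frac{(1+c_0)l}{c_0},$$
a bound depending only on $d,\epsilon,l$. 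I expect this step to be the main obstacle: it is the only place the local-volume hypothesis enters, and without the resulting bound the ``shape'' of the divisor computing the mld is uncontrolled.

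To conclude, drop the components with $\mult_{E_i}B_{ij}=0$ (they do not change $a(E_i,X_i,B_i)$). The surviving ones have $\mult_{E_i}B_{ij}\ge\tfrac1I$ and $b_{ij}\ge b_{\min}$, so from $s_i\le a(E_i,X_i,0)\le L$ their number is at most $LI/b_{\min}$ and each lies in the finite set $\tfrac1I\ZZ\cap(0,L/b_{\min}]$; likewise $a(E_i,X_i,0)\in\tfrac1I\ZZ\cap(0,L]$. Passing to a subsequence we may assume $a(E_i,X_i,0)=\alpha$ is constant, the number $m$ of surviving components is constant, and their multiplicities form a fixed multiset $\{\mu_1,\dots,\mu_m\}$ with $\mu_j>0$. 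Then
$$a_i=\alpha-\sum_{j=1}^m b_{ij}\mu_j\in\alpha-T,\qquad T:=\Bigl\{\sum_{j=1}^m c_j\mu_j \Bigm| c_j\in\Ii\Bigr\}.$$
Since a finite sum of a fixed DCC set with fixed nonnegative coefficients is again DCC (extract monotone subsequences in each coordinate; the nonincreasing ones stabilize by the DCC property of $\Ii$, forcing the sums to be eventually nondecreasing), $T$ is DCC, so $\alpha-T$ is ACC — contradicting the strict increase of the $a_i$. Hence the set of all achievable values of $\mld(X\ni\eta,B)$ is an ACC subset of $[0,l]$, which we take to be $\MLD(d,\Ii,\epsilon,l)$.
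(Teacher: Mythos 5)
Your proof is correct, and it takes a genuinely different route from the one in the paper. The paper bounds the number of components of $B_i$ through $\eta_i$ by invoking \cite[18.22 Theorem]{Kol+92}, forms the ``limit'' boundary $\bar B_i$ with limit coefficients, appeals to \cite[Theorem 1.2]{Nak16} (discreteness of mlds for bounded Gorenstein index) to force $\mld(X_i\ni\eta_i,\bar B_i)$ into a finite set, and then derives a contradiction by a scaling/non-lc argument. You instead deduce everything from two ingredients: the uniform Cartier index $I$ (Lemma \ref{lem: xz21 1.4 nonclosed point}) gives integrality of $a(E_i,X_i,0)$ and of the multiplicities $\mult_{E_i}B_{ij}$ in $\tfrac1I\ZZ$, and the lct bound (Theorem \ref{thm: zhu24 1.3 real}) yields the key uniform bound $a(E_i,X_i,0)\le L=l(1+c_0)/c_0$. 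From this you get finiteness of the possible values of $a(E_i,X_i,0)$, of the surviving multiset of multiplicities, and of the number of surviving components — all without citing \cite{Kol+92} or \cite{Nak16} — and the contradiction is then elementary DCC arithmetic on $\alpha-T$. This is more self-contained and arguably more illuminating: it makes explicit what the local-volume hypothesis buys (a bound on $a(E_i,X_i,0)$), whereas the paper routes the same information through blacker boxes. Two minor remarks. First, your parenthetical claim that the lct-bound step is ``the only place the local-volume hypothesis enters'' is not quite right — it also enters through the Cartier index bound $I$; this does not affect the argument. Second, the passage from lc-ness at all closed points of $W_i$ to $a(E_i,X_i,(1+c_0)B_i)\ge0$ for $E_i$ with center $W_i$ is standard (the non-lc locus is closed, so if it contained $W_i$ it would contain closed points of $W_i$), but you could spell it out; the paper makes the same inference in Lemma \ref{lem: qfact nv to mld lower bound}.
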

\begin{proof}
    Suppose the lemma does not hold. Then there exists a sequence $(X_i\ni\eta_i,B_i)$ and $W_i:=\overline{\eta_i}$ satisfying the conditions of $(X\ni\eta,B)$ and $W$ as above, such that $a_i:=\mld(X_i\ni\eta_i,B_i)$ is strictly increasing. We may assume that all components of $B_i$ pass through $\eta_i$. Since $X_i$ is $\Qq$-factorial near $\eta_i$ and $B_i\in\Ii$, by \cite[18.22 Theorem]{Kol+92}, possibly replacing $B_i$ and passing to a subsequence, we may assume that there exists a positive integer $m$ and real numbers $\{b_{i,j}\}_{i\geq 1,1\leq j\leq m}$, such that
    $$B_i=\sum_{j=1}^mb_{i,j}B_{i,j},$$
    where $B_{i,j}$ are the irreducible components of $B_i$, $b_{i,j}\in\Ii$, and $\{b_{i,j}\}_{i=1}^{+\infty}$ is increasing for any fixed $j$. We let $\bar b_j:=\lim_{i\rightarrow+\infty}b_{i,j}$ for each $j$ and let $\bar B_i:=\sum_{j=1}^m \bar b_jB_{i,j}$. 

    Let $c:=c(d)$ be the real number defined in Theorem \ref{thm: zhu24 1.3 real}. Then $(X_i\ni x_i,(1+c\epsilon)B_i)$ is lc for any closed point $x_i\in W_i$, so $(X_i\ni\eta_i,(1+c\epsilon)B_i)$ is lc. Since $B_i\in\Ii$, possibly passing to a subsequence, we may assume that 
    $$(1+c\epsilon)B_i\geq\bar B_i$$
    for each $i$. Then $(X_i\ni\eta_i,\bar B_i)$ is lc for each $i$.  By Lemma \ref{lem: xz21 1.4 nonclosed point}, there exists a positive integer $I$ depending only on $d$ and $\epsilon$, such that $ID$ is Cartier near $\eta_i$ for any Weil divisor $D$ on $X_i$. By \cite[Theorem 1.2]{Nak16}, $\mld(X_i\ni\eta_i,\bar B_i)$ belongs to a discrete set. Since
    $$\mld(X_i\ni\eta_i,\bar B_i)\leq \mld(X_i\ni\eta_i,B_i)\leq l,$$
$\mld(X_i\ni\eta_i,\bar B_i)$ belongs to a finite set. Possibly passing to a subsequence, we may assume that $a:=\mld(X_i\ni\eta_i,\bar B_i)$ is a constant. Since $a_i$ is strictly increasing, $a_i-a$ is strictly increasing. In particular, we may assume that $a_i>a$ for each $i$.

Let $E_i$ be a prime divisor over $X_i\ni\eta_i$ such that $a=a(E_i,X_i,\bar B_i)$. Then $a(E_i,X_i,B_i)\geq a_i$. Thus $\mult_{E_i}(\bar B_i-B_i)\geq a_i-a$, so
$$a\left(E_i,X_i,\bar B_i+\frac{2a}{a_i-a}(\bar B_i-B_i)\right)<0.$$
Thus $$\left(X_i\ni\eta_i,\bar B_i+\frac{2a}{a_i-a}(\bar B_i-B_i)\right)=\left(X_i\ni\eta_i,B_i+\left(1+\frac{2a}{a_i-a}\right)(\bar B_i-B_i)\right)$$ is not lc. However, for any $i\gg 0$, 
$$(1+c\epsilon)B_i\geq B_i+\left(1+\frac{2a}{a_i-a}\right)(\bar B_i-B_i),$$
so $(X_i\ni\eta_i,(1+c\epsilon)B_i)$ is not lc, a contradiction.
\end{proof}

\begin{thm}\label{thm: acc mld nv lower bound}
    Let $d$ be a positive integer, $\epsilon,l$ two positive real numbers, and $\Ii\subset [0,1]$ a DCC set. Assume that 
    \begin{enumerate}
        \item $(X\ni x,B)$ is a klt germ of dimension $d$,
        \item $B\in\Ii$,
        \item $\nvol(X\ni x,B)\geq\epsilon$, and
        \item $\mld(X\ni x,B)\leq l$.
    \end{enumerate}
    Then $\mld(X\ni x,B)\in\MLD(d,\Ii,\epsilon,l)$ where $\MLD(d,\Ii,\epsilon,l)$ is the set as in Lemma \ref{lem: acc mld nv lower bound q factorial}.
\end{thm}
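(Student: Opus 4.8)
The plan is to reduce Theorem~\ref{thm: acc mld nv lower bound} to the $\Qq$-factorial case already handled in Lemma~\ref{lem: acc mld nv lower bound q factorial} by passing to a small $\Qq$-factorialization. First I would take $f\colon Y\rightarrow X$ a small $\Qq$-factorialization (which exists by \cite{BCHM10}) and set $K_Y+B_Y:=f^*(K_X+B)$, so that $(Y,B_Y)$ is klt, $Y$ is $\Qq$-factorial, and $f$ is an isomorphism in codimension one; in particular $B_Y=f^{-1}_*B$ has the same coefficients as $B$, hence $B_Y\in\Ii$ and $\dim Y=d$. Since $f$ is small, $a(E,X,B)=a(E,Y,B_Y)$ for every divisor $E$ over $X$, and the divisors over $X\ni x$ are exactly the divisors over $Y/X\ni x$; therefore $\mld(X\ni x,B)=\mld(Y/X\ni x,B_Y)=\inf_{\eta\in f^{-1}(x)}\mld(Y\ni\eta,B_Y)$ where $\eta$ ranges over the (not necessarily closed) points of the fibre $f^{-1}(x)$, which is a proper closed subvariety of $Y$.

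Next I would transfer the hypotheses to $Y$. For any closed point $y\in f^{-1}(x)$, Lemma~\ref{lem: zhu24 2.10 real} applied to $f$ (note $K_Y+B_Y=f^*(K_X+B)$, so in particular $K_Y+B_Y\leq f^*(K_X+B)$) gives $\nvol(Y\ni y,B_Y)\geq\nvol(X\ni x,B)\geq\epsilon$. So with $W:=f^{-1}(x)$, the pair $(Y,B_Y)$ together with any point $\eta\in W$ satisfies conditions (1)--(3) of Lemma~\ref{lem: acc mld nv lower bound q factorial}. Since the fibre $f^{-1}(x)$ has only finitely many irreducible components and hence finitely many generic points $\eta_1,\dots,\eta_r$, and $\mld(Y\ni\eta,B_Y)\geq\mld(Y\ni\eta_j,B_Y)$ whenever $\eta\in\overline{\eta_j}$, we have $\mld(X\ni x,B)=\min_{1\leq j\leq r}\mld(Y\ni\eta_j,B_Y)$. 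For each $j$, either $\mld(Y\ni\eta_j,B_Y)\leq l$, in which case Lemma~\ref{lem: acc mld nv lower bound q factorial} gives $\mld(Y\ni\eta_j,B_Y)\in\MLD(d,\Ii,\epsilon,l)$, or $\mld(Y\ni\eta_j,B_Y)>l$. Since the overall minimum equals $\mld(X\ni x,B)\leq l$ by hypothesis (4), at least one index $j$ achieves a value $\leq l$, and the minimizing index does, so $\mld(X\ni x,B)\in\MLD(d,\Ii,\epsilon,l)$.

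The one point that needs a little care is the identification $\mld(X\ni x,B)=\inf_{\eta\in f^{-1}(x)}\mld(Y\ni\eta,B_Y)$ and the replacement of the infimum over all points of the fibre by a minimum over its finitely many generic points; this is where I would be most careful, using that $f$ is small so discrepancies are preserved and that $\Center_X E=\overline{x}$ translates to $\Center_Y E\subseteq f^{-1}(x)$, together with the finiteness of the set of generic points of $f^{-1}(x)$. The rest is a direct invocation of Lemma~\ref{lem: zhu24 2.10 real} and Lemma~\ref{lem: acc mld nv lower bound q factorial}, so I do not anticipate any serious obstacle beyond this bookkeeping.
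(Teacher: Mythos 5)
Your overall strategy — reduce to the $\Qq$-factorial case via a small $\Qq$-factorialization $f\colon Y\to X$, set $K_Y+B_Y=f^*(K_X+B)$, and invoke Lemma~\ref{lem: zhu24 2.10 real} and Lemma~\ref{lem: acc mld nv lower bound q factorial} — is exactly the paper's approach, and the transfer of hypotheses (1)--(3) to $Y$ is fine. However, the step you yourself flag as needing care contains a genuine error. You assert that $\mld(Y\ni\eta,B_Y)\geq\mld(Y\ni\eta_j,B_Y)$ whenever $\eta\in\overline{\eta_j}$ (so that the infimum over $\eta\in f^{-1}(x)$ can be replaced by a minimum over the finitely many generic points $\eta_1,\dots,\eta_r$ of $f^{-1}(x)$), and this is false. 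A specialization can have strictly smaller mld than its generization: for instance, take $Y$ a $\Qq$-factorial klt threefold with no boundary, $C$ a curve through a singular closed point $p$ with $\mld(Y\ni p)<2$ while $Y$ is smooth along $C\setminus\{p\}$; then $\mld(Y\ni\eta_C)=2$ but $\mld(Y\ni p)<2$, even though $p\in\overline{\eta_C}$. Consequently, the infimum $\inf_{\eta\in f^{-1}(x)}\mld(Y\ni\eta,B_Y)$ need not be attained at any $\eta_j$, and the equality $\mld(X\ni x,B)=\min_j\mld(Y\ni\eta_j,B_Y)$ that your argument relies on does not hold in general.

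The fix, which is what the paper does, is shorter and avoids generic points entirely: since $(X\ni x,B)$ is klt, $\mld(X\ni x,B)\geq 0$ and the infimum defining it is a minimum, attained by some prime divisor $E_0$ over $X\ni x$. Let $\eta_0$ be the generic point of $\Center_Y E_0$; then $\eta_0\in f^{-1}(x)$ (not necessarily a generic point of $f^{-1}(x)$), and because $f$ is small one checks directly that $\mld(Y\ni\eta_0,B_Y)=\mld(X\ni x,B)$: the inequality $\leq$ comes from $a(E_0,Y,B_Y)=a(E_0,X,B)$, and $\geq$ comes from the fact that every divisor over $Y\ni\eta_0$ has center contained in $f^{-1}(x)$, hence is over $X\ni x$. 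Since $\overline{\eta_0}\subseteq f^{-1}(x)$, Lemma~\ref{lem: zhu24 2.10 real} gives the volume bound at all closed points of $\overline{\eta_0}$, and Lemma~\ref{lem: acc mld nv lower bound q factorial} applied to $(Y\ni\eta_0,B_Y)$ (with $W=\overline{\eta_0}$, as the lemma itself stipulates) finishes the proof. In short: apply the lemma at the one point realizing the mld, rather than trying to range over generic points of the fiber.
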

\begin{proof}
Let $f: Y\rightarrow X$ be a small $\Qq$-factorialization near $x$, and $K_Y+B_Y:=f^*(K_X+B)$. Then there exists a (not necessarily closed) point $\eta\in f^{-1}(x)$, such that
$$\mld(Y\ni\eta,B_{Y})=\mld(X\ni x,B)\leq l$$
By our construction, $(Y,B_Y)$ is $\Qq$-factorial klt near $\overline{\eta}$, $\dim Y=d$, and $B_Y\in\Ii$. By Lemma \ref{lem: zhu24 2.10 real}, 
$$\nvol(Y\ni y,B_Y)\geq\nvol(X\ni x,B)\geq\epsilon$$
for any closed point $y\in\overline{\eta}$. By Lemma \ref{lem: acc mld nv lower bound q factorial}, $\mld(X\ni x,B)=\mld(Y\ni\eta,B_{Y})\in\MLD(d,\Ii,\epsilon,l)$.
\end{proof}

\subsection{ACC for \texorpdfstring{$a$}{}-lc thresholds}

\begin{lem}\label{lem: acc alct nv lower bound q factorial}
    Let $d$ be a positive integer, $\epsilon,a$ two positive real numbers, and $\Ii\subset [0,1]$ a DCC set. Then there exists an ACC set $\Ii'$ and a positive real number $s$ depending only on $d,\epsilon,a$ and $\Ii$ satisfying the following. Assume that $X$ is a normal variety, $\eta\in X$ is a (not necessarily closed) point, and $B\geq 0$ is an $\Rr$-divisor, and $S$ a component of $B$, such that
    \begin{enumerate}
        \item $(X,B)$ is $\Qq$-factorial klt near $W:=\overline{\eta}$,
        \item $\dim X=d$ and $B\in\Ii$,
        \item $\nvol(X\ni x,B)\geq\epsilon$ for any closed point $x\in W$, and
        \item $a\leq\mld(X\ni\eta,B)$.
    \end{enumerate}
    Then $a$-$\lct(X\ni\eta,B-(\mult_SB)S;S)\in\Ii'\cup [\mult_SB+s,+\infty)$.
\end{lem}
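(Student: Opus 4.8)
The plan is to argue by contradiction and leverage Lemma \ref{lem: acc mld nv lower bound q factorial} together with an ACC statement for $a$-lc thresholds in the $\Qq$-Gorenstein (bounded index) setting. Suppose the conclusion fails: there is a sequence $(X_i\ni\eta_i,B_i)$ with components $S_i\subset B_i$, satisfying conditions (1)--(4), such that $t_i:=a\text{-}\lct(X_i\ni\eta_i,B_i-(\mult_{S_i}B_i)S_i;S_i)$ satisfies $t_i<\mult_{S_i}B_i+\frac1i$ (so $t_i$ is bounded above, ruling out the $[\mult_SB+s,+\infty)$ branch) yet $t_i\notin\Ii'$ for every candidate ACC set $\Ii'$; concretely, one passes to a strictly monotone subsequence of the $t_i$. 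Write $B_i':=B_i-(\mult_{S_i}B_i)S_i$, so $(X_i\ni\eta_i,B_i'+t_iS_i)$ is $a$-lc by semicontinuity of mld in the threshold, and is \emph{not} $a'$-lc for any $a'>a$ when $t_i$ exceeds the threshold infinitesimally.

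First I would record the uniform constraints available: by Lemma \ref{lem: xz21 1.4 nonclosed point} there is a fixed positive integer $I=I(d,\epsilon)$ so that $I D$ is Cartier near $\eta_i$ for every Weil divisor $D$ on $X_i$ — in particular $I S_i$ and $I(K_{X_i}+B_i)$ are Cartier near $\eta_i$, so $\mult_{E}S_i\in\frac1I\Zz_{>0}$ for any divisor $E$ over $X_i\ni\eta_i$ with $S_i$ in its center. Second, by Theorem \ref{thm: zhu24 1.3 real} there is $c=c(d)$ with $(X_i\ni x_i,(1+c\epsilon)B_i)$ lc for closed $x_i\in W_i$, hence $(X_i\ni\eta_i,(1+c\epsilon)B_i)$ lc; since $\mult_{S_i}B_i\in\Ii$ is DCC and $t_i$ is bounded, the coefficients of $B_i'+t_iS_i$ lie in a DCC set (after passing to a subsequence, in fact the $\bar b_j$-limit trick of Lemma \ref{lem: acc mld nv lower bound q factorial} applies to the fixed number $m$ of components). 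Third, the mld condition $a\le\mld(X_i\ni\eta_i,B_i)$ combined with Lemma \ref{lem: acc mld nv lower bound q factorial} (applied with the value $l$ chosen, say, as any fixed bound $\ge a$ coming from Corollary \ref{cor: nv lower bound imply mld}) forces $\mld(X_i\ni\eta_i,B_i)$ into a finite set after passing to a subsequence, so it may be taken constant.

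The core of the argument is then an ACC-for-thresholds statement with bounded Cartier index: having reduced to a setting where $I(K_{X_i}+B_i)$ and $IS_i$ are Cartier near $\eta_i$, the coefficients of $B_i'$ lie in a DCC set, and $(X_i\ni\eta_i,B_i)$ is $a$-lc with $a$-lc thresholds bounded above, I would invoke the ACC for $a$-lc thresholds in the bounded-index case — this is the analog of \cite{Nak16} / the standard ACC-for-lct machinery, adapted to $a$-lc in place of lc and to a fixed bound on the Cartier index of $K_X+B$ and of the tested divisor $S$. (If no such black box is quotable, one runs the perturbation argument: $a$-$\lct(X\ni\eta,B';S)=\lct(X\ni\eta,B'+(\text{small})\cdot(\text{boundary correction});S)$ reduces $a$-lc to lc at the cost of perturbing coefficients within a still-DCC set, whence the usual ACC for lc thresholds applies.) This yields an ACC set $\Ii'$ containing all the $t_i$ that are $<\mult_{S_i}B_i+s$ for a suitable uniform $s>0$; the threshold values that are $\ge\mult_{S_i}B_i+s$ are exactly the ones excluded into the $[\mult_SB+s,+\infty)$ branch. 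The existence of the gap $s$ is what prevents $t_i$ from accumulating at $\mult_{S_i}B_i$ from above — here is where one uses that $(X_i\ni\eta_i,(1+c\epsilon)B_i)$ is lc uniformly, so the threshold cannot be pushed much beyond the coefficient $\mult_{S_i}B_i$ without violating the fixed $a$-lc-ness governed by the constant mld value.

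The main obstacle I anticipate is the interface between $a$-lc thresholds and the classical ACC-for-lct results, which are stated for lc (i.e. $a=0$): one must either cite an $a$-lc version (e.g. via \cite{HLS19}-type arguments, or \cite[Theorem 5.19]{HLL22}) or carefully carry out the reduction from $a$-lc to lc by absorbing the discrepancy $a$ into an auxiliary perturbation of the coefficients, checking that the perturbed coefficient set is still DCC and that the Cartier-index bound $I$ is preserved. A secondary subtlety is the bookkeeping around the non-closed point $\eta_i$: one must ensure every invocation (ACC for lct, the finiteness of mld, the bound on Cartier index) is valid \emph{at} $\eta_i$ rather than merely at general closed points of $W_i$, which is precisely why Lemma \ref{lem: xz21 1.4 nonclosed point} and the localization arguments in Lemmas \ref{lem: qfact nv to mld lower bound}--\ref{lem: acc mld nv lower bound q factorial} were set up in that generality.
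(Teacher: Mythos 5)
Your proposal assembles the right preliminary ingredients (the Cartier-index bound $I$ from Lemma~\ref{lem: xz21 1.4 nonclosed point}, the lc-ness of $(1+c)B$ from Theorem~\ref{thm: zhu24 1.3 real}, and a role for Lemma~\ref{lem: acc mld nv lower bound q factorial}) and correctly identifies the contradiction framework, but the argument has a genuine gap at exactly the place you flag: the ``core of the argument'' is delegated to an ``ACC for $a$-lc thresholds in the bounded-index case,'' which is neither a quotable black box nor is the sketched ``perturbation argument'' carried out. That is not a minor bookkeeping issue; it is the entire content of the lemma, and the paper does \emph{not} resolve it by reducing to an $a$-lct version of Nakamura or HMX14.

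What the paper actually does, and what your proposal misses, is a direct contradiction built from the \emph{perturbed} pair at the limiting coefficient. After setting $s=\tfrac12 c\gamma_0$ and passing to a bad sequence with $t_i\nearrow t$ and $t_i<b_i+s$, the paper (i) uses the ordinary ACC for lc thresholds \cite[Theorem 1.1]{HMX14} merely to conclude that $(X_i\ni x_i,\,B_i-b_iS_i+(t+s)S_i)$ is lc for closed $x_i\in W_i$; (ii) feeds this into Lemma~\ref{lem: zhu24 2.11 real} to obtain a uniform lower bound $\nvol(X_i\ni x_i,\,B_i-b_iS_i+tS_i)\ge\epsilon_1$ --- note this is the pair with $S_i$-coefficient equal to the fixed limit $t$, not the original $B_i$; (iii) applies Theorem~\ref{thm: acc mld nv lower bound} (the already-established ACC for mlds with a local-volume lower bound, which rests on Lemma~\ref{lem: acc mld nv lower bound q factorial}) to this perturbed family to extract a \emph{decreasing} sequence $a_i'=\mld(X_i\ni\eta_i,B_i-b_iS_i+tS_i)<a$; and (iv) lets $E_i$ compute $a_i'$, uses the $a$-lc-ness at coefficient $t_i$ to bound $\mult_{E_i}S_i\ge\frac{a-a_i'}{t-t_i}$, and then shows that the coefficient $t_i+\frac{2a(t-t_i)}{a-a_i'}$ at $S_i$, which still lies below $b_i+c\gamma_0$ once $i\gg0$ because $a-a_i'$ is bounded away from $0$ while $t-t_i\to0$, produces a non-lc pair --- contradicting that $(X_i\ni\eta_i,B_i+c\gamma_0 S_i)$ is lc. Your use of Lemma~\ref{lem: acc mld nv lower bound q factorial} only to ``force $\mld(X_i\ni\eta_i,B_i)$ into a finite set'' misses the essential move of applying the ACC-for-mlds result to the \emph{perturbed, limit-coefficient} pairs, and the divisor $E_i$ computing their mld, rather than to the original pairs; without it, no contradiction appears and the lemma remains unproved.
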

\begin{proof}
We may assume that $\Ii\not=\emptyset$. Let $\gamma_0:=\min\Ii_{>0}$.

By Theorem \ref{thm: zhu24 1.3 real}, there exists a positive real number $c$ depending only on $d$ and $\epsilon$ such that $(X\ni x,(1+c)B)$ is lc for any closed point $x\in W$. Thus  $(X\ni \eta,(1+c)B)$ is lc, so $(X\ni\eta, B+c\gamma_0S)$ is lc. Let $s:=\frac{1}{2}c\gamma_0$. 

Suppose the lemma does not hold. Then there exists a sequence $(X_i\ni\eta_i,B_i)$, $S_i$ and $W_i:=\overline{\eta_i}$ satisfying the conditions (1)-(4) above, such that, $t_i:=a$-$\lct(X_i\ni\eta_i,B_i-b_iS_i;S_i)$ is strictly increasing, and $t_i<b_i+s$ for any $i$, where $b_i:=\mult_{S_i}B_i$. Let $t:=\lim_{i\rightarrow+\infty}t_i$, then by the ACC for log canonical thresholds \cite[Theorem 1.1]{HMX14}, $(X_i\ni x_i,B_i-b_iS_i+tS_i+\frac{1}{2}c\gamma_0S_i)$ is lc for any closed point $x_i\in W_i$, by Lemma \ref{lem: zhu24 2.11 real}, there exists a positive real number $\epsilon_1$ depending only on $d,\epsilon$ and $\Ii$, such that $\nvol(X_i\ni x_i,B_i-b_iS_i+tS_i)\geq\epsilon_1$. Let $a_i':=\mld(X_i\ni\eta_i,B_i-b_iS_i+tS_i)$. Since $a_i'<a$, by Theorem \ref{thm: acc mld nv lower bound}, possibly passing to a subsequence, we may assume that $a_i'$ is decreasing. Let $E_i$ be a prime divisor over $X_i\ni\eta_i$ such that $a(E_i,X_i,B_i-b_i S_i+tS_i)=a_i'$. Then $a(E_i,X_i,B_i-b_i S_i+t_iS_i)\ge a>a_i'$, and so $\mult_{E_i}S_i>\frac{a-a_i'}{t-t_i}$. Thus
\[
    a\left(E_i,X_i,B_i+\left(t_i-b_i+\frac{2a(t-t_i)}{a-a_i'}\right)S_i\right)\leq -a<0,
\]
so $\left(X_i\ni\eta_i,B_i+\left(t_i-b_i+\frac{2a(t-t_i)}{a-a_i'}\right)S_i\right)$ is not lc. This is not possible as $$b_i+c\gamma_0>b_i+\frac{1}{2}c\gamma_0+\frac{2a(t-t_i)}{a-a_i'}>t_i+\frac{2a(t-t_i)}{a-a_i'}$$
for any $i\gg 0$ and $(X_i\ni\eta_i,B_i+c\gamma_0S_i)$ is lc.
\end{proof}

The following lemma is a slightly strengthened version of \cite[Lemma 5.17]{HLS19}.

\begin{lem}\label{lem:hls19 5.17 improved}
Let $M$ be a positive real number, $\Ii\subset [0,M]$ a DCC set, and $\Ii'=\bar\Ii'$ an ACC set. Then for any positive real number $\alpha$, there exist a finite set $\Ii_{\alpha}\subset\bar\Ii$ and a projection $p_{\alpha}:\bar\Ii\rightarrow\Ii_{\alpha}$ satisfying the following:
\begin{enumerate}
    \item $\gamma+\alpha\geq p_{\alpha}(\gamma)\geq\gamma$ for any $\gamma\in\Ii$,
    \item $p_{\alpha}(\gamma')\geq p_{\alpha}(\gamma)$ for any $\gamma',\gamma\in\Ii$ such that $\gamma'\geq\gamma$, 
    \item for any $\beta\in\Ii'$ and $\gamma\in\Ii$, if $\beta\geq\gamma$, then $\beta\geq p_{\alpha}(\gamma)$, and
    \item if $\alpha=n\alpha'$ for some positive integer $n$, then $p_{\alpha}(\gamma)\geq p_{\alpha'}(\gamma)$ for any $\gamma\in\Ii$.
\end{enumerate}
\end{lem}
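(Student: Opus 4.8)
The plan is to construct $\Ii_\alpha$ and $p_\alpha$ by a greedy procedure using the order structure on $\bar\Ii$, exploiting the fact that $\bar\Ii$ (the closure of a DCC set in $[0,M]$) is itself a DCC set and that $\Ii'=\bar\Ii'$ satisfies ACC. First I would fix $\alpha>0$ and define, for each $\gamma\in\bar\Ii$, the ``admissible ceiling'' $c(\gamma):=\inf\big(\{\gamma+\alpha\}\cup\{\beta\in\Ii'\mid \beta\geq\gamma\}\big)$; since $\Ii'$ is closed and satisfies ACC, this infimum is attained and lies in $\{\gamma+\alpha\}\cup\Ii'$, and moreover $c(\gamma)\geq\gamma$. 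The idea is that $p_\alpha(\gamma)$ must lie in $[\gamma,c(\gamma)]$ in order to get (1) and (3) simultaneously. Then I would build $\Ii_\alpha$ inductively going \emph{downward}: because $\bar\Ii$ is DCC it is well-ordered by the reverse order only after we note it has a maximum; more carefully, I would run a transfinite greedy selection — repeatedly pick the largest not-yet-covered element $\gamma$ of $\bar\Ii$, throw $c(\gamma)$... no: rather, define $p_\alpha(\gamma)$ to be the largest element of the finite set being built that is $\leq c(\gamma)$, or if none exists yet, adjoin a new ``anchor''. The cleanest route is: cover $\bar\Ii$ by finitely many intervals $[\gamma,c(\gamma)]$.

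Concretely, here is the key step. Consider the collection of intervals $\{[\gamma, c(\gamma)]\}_{\gamma\in\bar\Ii}$. I claim finitely many of them cover $\bar\Ii$. Suppose not; then one can extract a strictly decreasing sequence $\gamma_1>\gamma_2>\cdots$ in $\bar\Ii$ with $\gamma_{k+1}\notin\bigcup_{j\leq k}[\gamma_j,c(\gamma_j)]$, forcing $\gamma_{k+1}<\gamma_k$ and in fact $c(\gamma_{k+1})<\gamma_k$ for all $k$; but a strictly decreasing sequence in the DCC set $\bar\Ii$ is impossible. (One has to be a little careful to set this extraction up so that it terminates — the right statement is that any subset of $\bar\Ii$ with the property that no interval $[\gamma,c(\gamma)]$ contains another selected point has order type at most $\omega$ reversed, hence is finite by DCC.) Having a finite subcover $[\gamma_1,c(\gamma_1)],\dots,[\gamma_N,c(\gamma_N)]$ with $\gamma_1<\cdots<\gamma_N$, I would set $\Ii_\alpha:=\{\gamma_1,\dots,\gamma_N\}$ (closed under nothing further needed, it is finite and $\subset\bar\Ii$) and define $p_\alpha(\gamma):=\max\{\gamma_j\mid \gamma_j\geq\gamma,\ \gamma\in[\gamma_j,c(\gamma_j)]\}$ — equivalently the largest anchor whose interval contains $\gamma$. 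Then $p_\alpha(\gamma)\geq\gamma$ is immediate, $p_\alpha(\gamma)\leq c(\gamma)\leq\gamma+\alpha$ gives (1), and $p_\alpha(\gamma)\leq c(\gamma)\leq\beta$ for any $\beta\in\Ii'$ with $\beta\geq\gamma$ gives (3). Monotonicity (2) needs the interval system arranged so that the map $\gamma\mapsto p_\alpha(\gamma)$ is monotone; this can be arranged by replacing the chosen intervals with their ``staircase closure'' (if $\gamma<\gamma'$ but the naive choice gives $p_\alpha(\gamma)>p_\alpha(\gamma')$, enlarge $\Ii_\alpha$ by finitely many more anchors — still finite — or more simply define $p_\alpha(\gamma):=\min\{\gamma_j\in\Ii_\alpha\mid \gamma_j\geq\gamma\}$ after first enlarging $\Ii_\alpha$ so that every $\gamma\in\bar\Ii$ has \emph{some} anchor in $[\gamma,c(\gamma)]$ that is $\geq\gamma$; then $p_\alpha$ is automatically nondecreasing and still satisfies (1) and (3)). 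That $p_\alpha$ is a projection, $p_\alpha\circ p_\alpha=p_\alpha$, follows since $p_\alpha$ restricted to $\Ii_\alpha$ is the identity: for $\gamma_j\in\Ii_\alpha$ we have $\gamma_j\in[\gamma_j,c(\gamma_j)]$ so the minimal anchor $\geq\gamma_j$ is $\gamma_j$ itself.

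For part (4), the point is compatibility across different $\alpha$. When $\alpha=n\alpha'$, I would \emph{not} run two independent greedy constructions; instead I would build $\Ii_{\alpha'}$ first and then, when constructing $\Ii_\alpha$, require $\Ii_{\alpha'}\subseteq\Ii_\alpha$... actually the reverse: since a larger $\alpha$ gives wider intervals hence a \emph{coarser} cover, I expect $\Ii_\alpha\subseteq\Ii_{\alpha'}$ is the natural inclusion and $p_\alpha=p_{\alpha_0}\circ$(something). The honest approach is to fix once and for all a single countable ``master'' DCC well-ordering and do all constructions for $\alpha\in\{\alpha_0, \alpha_0/2, \alpha_0/3,\dots\}$ (or the relevant countable family) simultaneously and consistently, defining $p_\alpha(\gamma):=\min\{\gamma_j\in\Ii_\alpha : \gamma_j\geq\gamma\}$ from a nested family $\Ii_{\alpha_0}\supseteq\Ii_{\alpha_0/2}\supseteq\cdots$; then $p_\alpha(\gamma)\geq p_{\alpha'}(\gamma)$ whenever $\Ii_\alpha\subseteq\Ii_{\alpha'}$, which holds when $\alpha\geq\alpha'$, giving (4). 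The \textbf{main obstacle} I anticipate is reconciling (2) and (4) with the finiteness of $\Ii_\alpha$: the greedy cover gives finiteness and (1),(3) cheaply, but forcing the projection to be globally monotone \emph{and} compatible under refinement $\alpha=n\alpha'$ requires the constructions for all relevant $\alpha$ to be coordinated through one underlying well-order of $\bar\Ii$, and verifying that the resulting nested finite sets genuinely work for all four conclusions at once is the delicate bookkeeping. This is essentially the argument of \cite[Lemma 5.17]{HLS19}, and I would follow that proof, inserting the extra clause (4) by making the choices monotone in $\alpha$ from the start.
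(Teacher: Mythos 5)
Your strategy (cover $\bar\Ii$ by intervals $[\gamma,c(\gamma)]$, extract a finite subcover, then read off the projection from the anchors) is genuinely different from the paper's, which uses a \emph{fixed} grid: partition $[0,M]$ into intervals $(k\alpha,(k+1)\alpha]$, set $f(\gamma):=\min\{\beta\in\Ii'\mid\beta\geq\gamma\}$, and define $p_\alpha(\gamma)$ to be the largest element of $\bar\Ii$ lying in the same grid interval as $\gamma$ and $\leq f(\gamma)$. The fixed grid makes (1) and (2) trivial, and (4) is automatic because when $\alpha=n\alpha'$ each $\alpha$-interval is a union of $\alpha'$-intervals, so the $\alpha$-maximum dominates the $\alpha'$-maximum. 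Finiteness is then a short DCC/ACC interplay. Your variable-interval strategy could in principle be made to work, but as written it has two real gaps.

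First, the finite-subcover argument is wrong in the direction you run it. If no finite subcover exists and you extract $\gamma_1,\gamma_2,\dots$ with $\gamma_{k+1}\notin\bigcup_{j\leq k}[\gamma_j,c(\gamma_j)]$, there is no way to ``force'' $\gamma_{k+1}<\gamma_k$: the point $\gamma_{k+1}$ may just as well lie above $c(\gamma_k)$. The natural extraction (after a Ramsey-type passage to a monotone subsequence, and using DCC to rule out decreasing) yields a strictly \emph{increasing} sequence $\gamma_{i_1}<\gamma_{i_2}<\cdots$ with $c(\gamma_{i_j})<\gamma_{i_{j+1}}$. To get a contradiction one then needs to split according to whether $c(\gamma_{i_j})=\gamma_{i_j}+\alpha$ (which violates boundedness of $\bar\Ii\subset[0,M]$ if it happens infinitely often) or $c(\gamma_{i_j})\in\Ii'$ (which produces a strictly increasing sequence in $\Ii'$, contradicting ACC). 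Your argument never invokes the ACC of $\Ii'$ in this step, yet it is essential; DCC of $\bar\Ii$ alone cannot rule out the increasing case.

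Second, your definition of $p_\alpha$ from the cover has a direction error. If $\Ii_\alpha=\{\gamma_1,\dots,\gamma_N\}$ are the \emph{left} endpoints of the covering intervals, then for $\gamma\in[\gamma_j,c(\gamma_j)]$ you have $\gamma_j\leq\gamma$, so ``the largest anchor $\gamma_j\geq\gamma$ with $\gamma\in[\gamma_j,c(\gamma_j)]$'' is typically empty; the anchors sit on the wrong side. The patch you gesture at ($p_\alpha(\gamma):=\min\{\gamma_j\in\Ii_\alpha\mid\gamma_j\geq\gamma\}$, after ``enlarging'' $\Ii_\alpha$) is exactly where the work is, and it is not carried out. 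A clean fix would be to replace each $\gamma_j$ by $a_j:=\max\{\beta\in\bar\Ii\mid\beta\leq c(\gamma_j)\}$ and set $p_\alpha(\gamma):=\min\{a_j\mid a_j\geq\gamma\}$; one then needs to check $p_\alpha(\gamma)\leq c(\gamma)$ using monotonicity of $c$. Similarly, the nesting $\Ii_{\alpha'}\supseteq\Ii_\alpha$ needed for (4) does not come for free from independent finite subcovers and you only sketch a plan. The paper's grid construction sidesteps all three difficulties simultaneously, which is why it is preferred.
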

\begin{proof}
    We may replace $\Ii$ with $\bar\Ii$ and assume that $\Ii=\bar\Ii$. Let $N_{\alpha}:=\lceil\frac{M}{\alpha}\rceil$, $\Ii_{\alpha,0}:=\Ii\cap [0,\frac{1}{N}]$, and $\Ii_{\alpha,k}:=\Ii\cap (k\alpha,(k+1)\alpha]$ for any $1\leq k\leq N_{\alpha}-1$. 

First, for any fixed $\alpha$, we construct $p_{\alpha}$. For any $\gamma\in\Ii$ and $\alpha$, there exists a unique $0\leq k\leq N_{\alpha}-1$ such that $\gamma\in\Ii_{\alpha,k}$. 

If $\gamma\in\Ii$ and $\gamma>\max\{\beta\mid\beta\in\Ii'\}$, then we let $p_{\alpha}(\gamma):=\max\{\beta\mid\beta\in\Ii_{\alpha,k}\}$. It is clear that $p_{\alpha}(\gamma)$ belongs to a finite set and $p_{\alpha}(p_{\alpha}(\gamma))=p_{\alpha}(\gamma)$.

Now for any $\gamma\in\Ii$ such that $\gamma\leq\max\{\beta\mid\beta\in\Ii'\}$, we define
$$f(\gamma):=\min\{\beta\in\Ii''\mid\beta\geq\gamma\}$$
and
$$p_{\alpha}(\gamma):=\max\{\beta\in\Ii_{\alpha,k}\mid\beta\leq f(\gamma),\gamma\in\Ii_{\alpha,k}\}.$$

Now we show that $\Ii_{\alpha}:=\{p_{\alpha}(\gamma)\mid\gamma\in\Ii\}$ satisfies our requirements. We prove (1-4) first. For any $\gamma\in\Ii$, it is clear that 
$$0\leq p_{\alpha}(\gamma)-\gamma\leq\alpha,$$
this implies (1). (2) and (3) follow from our construction. To prove (4), note that for any positive integer $n$ and $\alpha=n\alpha'$, and any $0\leq k'\leq N_{\alpha'}-1$, there exists a unique $0\leq k\leq N_{\alpha}-1$ such that $\Ii_{\alpha',k'}\subset\Ii_{\alpha,k}$. Thus if $\gamma>\max\{\beta\mid\beta\in\Ii'\}$, we have
    $$p_{\alpha'}(\gamma)=\max\{\beta\mid\beta\in\Ii_{\alpha',k'}\}\leq\max\{\beta\mid\beta\in\Ii_{\alpha,k}\}=p_{\alpha}(\gamma),$$
and if $\gamma\leq\max\{\beta\mid\beta\in\Ii'\}$, we have
    $$p_{\alpha'}(\gamma):=\max\{\beta\in\Ii_{\alpha',k'}\mid\beta\leq f(\gamma),\gamma\in\Ii_{\alpha',k'}\}\leq\max\{\beta\in\Ii_{\alpha,k}\mid\beta\leq f(\gamma),\gamma\in\Ii_{\alpha,k}\}=p_{\alpha}(\gamma).$$
    Thus (4) holds. 

Since $f(\gamma)\geq p_{\alpha}(\gamma)\geq\gamma$, we have $f(p_{\alpha}(\gamma))=f(\gamma)$ and $p_{\alpha}(p_{\alpha}(\gamma))=p_{\alpha}(\gamma)$, so $p_{\alpha}$ is a projection. We are left to show that $\Ii_{\alpha}$ is a finite set. 
is a finite set. Since $\Ii_{\alpha}\subset\Ii$, $\Ii_{\alpha}$ satisfies the DCC, so we only need to show that $\Ii_{\alpha}$ satisfies the ACC. Suppose that there exists a strictly increasing sequence $p_{\alpha}(\gamma_1)<p_{\alpha}(\gamma_2)<\dots$, where $\gamma_i\in\Ii$. Since $f(\gamma_i)\in\Ii'$, possibly passing to a subsequence,
we may assume that $f(\gamma_i)$ is decreasing. Thus $g(\gamma_i)$ is decreasing, and we
get a contradiction. Thus  $\Ii_{\alpha}$ is a finite set and we are done.
\end{proof}


\begin{prop}\label{prop: alct uniform increasing}
        Let $d$ be a positive integer, $\epsilon,a$ two positive real numbers, and $\Ii\subset [0,1]$ a DCC set such that $\bar\Ii=\Ii$. Then there exists a positive real number $s$ depending only on $d,\epsilon,a$ and $\Ii$, such that for any positive real number $\alpha<s$, there exist a finite set $\Ii_{\alpha}\subset\Ii$ and a projection $p_{\alpha}: \Ii\rightarrow\Ii_{\alpha}$ depending only on $d,\epsilon,\alpha,a$ and $\Ii$ such that 
        \begin{enumerate}
           \item  $\gamma+\alpha\geq p_{\alpha}(\gamma)\geq\gamma$ for any $\gamma\in\Ii$,
           \item $p_{\alpha}(\gamma')\geq p_{\alpha}(\gamma)$ for any $\gamma',\gamma\in\Ii$ such that $\gamma'\geq\gamma$,
            \item if $\alpha=n\alpha'$ for some positive integer $n$, then $p_{\alpha}(\gamma)\geq p_{\alpha'}(\gamma)$ for any $\gamma\in\Ii$.
        \end{enumerate}
and satisfies the following. Assume that $X$ is a normal variety, $\eta\in X$ is a (not necessarily closed) point, and $B\geq 0$ is an $\Rr$-divisor, such that
    \begin{itemize}
        \item $(X,B)$ is $\Qq$-factorial klt near $W:=\overline{\eta}$,
        \item $\dim X=d$ and $B=\sum_{i=1}^m b_iB_i$, where $B_i$ are the irreducible components of $B$ and $b_i\in\Ii$ for each $i$,
        \item $\nvol(X\ni x,B)\geq\epsilon$ for any closed point $x\in W$, and
        \item $a\leq\mld(X\ni\eta,B)$.
    \end{itemize}
    Then
    \begin{enumerate}
        \item[(4)] $(X\ni\eta, p_{\alpha}(B):=\sum_{i=1}^m p_{\alpha}(b_i)B_i)$ is $a$-lc.
    \end{enumerate}
\end{prop}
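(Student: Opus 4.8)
The plan is to combine Lemma \ref{lem: acc alct nv lower bound q factorial}, which provides a uniform gap $s$ for the $a$-lc thresholds of components of $B$, with the combinatorial device of Lemma \ref{lem:hls19 5.17 improved}, which produces projections that "round up" DCC coefficients without violating membership in an ACC set. First I would invoke Lemma \ref{lem: acc alct nv lower bound q factorial} to obtain the ACC set $\Ii'$ and the positive real number $s$ (depending only on $d,\epsilon,a,\Ii$) such that, under hypotheses (1)--(4) of that lemma, for every component $S$ of $B$ one has $a\text{-}\lct(X\ni\eta,B-(\mult_SB)S;S)\in\Ii'\cup[\mult_SB+s,+\infty)$. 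I may enlarge $\Ii'$ to its closure $\bar\Ii'$ (still ACC) and shrink $s$ if needed. Then I apply Lemma \ref{lem:hls19 5.17 improved} with $M=1$, the DCC set $\Ii$ (which equals $\bar\Ii$ by hypothesis), and the ACC set $\bar\Ii'$: for every $\alpha>0$ this yields a finite set $\Ii_\alpha\subset\Ii$ and a projection $p_\alpha:\Ii\to\Ii_\alpha$ satisfying properties (1), (2), (3) in the statement (these are exactly (1), (2), (4) of Lemma \ref{lem:hls19 5.17 improved}, with the DCC hypothesis $\bar\Ii=\Ii$ ensuring $\Ii_\alpha\subset\Ii$) plus the extra property that $\beta\in\bar\Ii'$, $\beta\ge\gamma$ implies $\beta\ge p_\alpha(\gamma)$. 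Everything so far depends only on $d,\epsilon,\alpha,a,\Ii$, as required.

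It remains to prove (4): that $(X\ni\eta,p_\alpha(B))$ is $a$-lc whenever $\alpha<s$. I would argue component by component. Fix a component $S=B_i$ with $b_i=\mult_SB$, and write $B=B'+b_iS$ where $B'=B-b_iS$. Let $t_i:=a\text{-}\lct(X\ni\eta,B';S)\ge b_i$ (this inequality holds because $(X\ni\eta,B)$ is already $a$-lc by hypothesis (4) of the Proposition). By Lemma \ref{lem: acc alct nv lower bound q factorial}, either $t_i\ge b_i+s$ or $t_i\in\bar\Ii'$. In the first case, since $\alpha<s$ and $b_i\le p_\alpha(b_i)\le b_i+\alpha<b_i+s\le t_i$, replacing the coefficient of $S$ by $p_\alpha(b_i)$ keeps the pair $a$-lc along $S$. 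In the second case $t_i\in\bar\Ii'$ and $t_i\ge b_i$, so by property (3) of the projection (item (3) of Lemma \ref{lem:hls19 5.17 improved}) we get $t_i\ge p_\alpha(b_i)$, and again raising the coefficient of $S$ to $p_\alpha(b_i)$ preserves $a$-lc-ness along $S$. Since this holds for every component and since increasing one coefficient at a time (each step individually $a$-lc relative to the previous pair) chains together — more precisely, $p_\alpha(B)-B=\sum_i(p_\alpha(b_i)-b_i)B_i$ and one can write $p_\alpha(B)=B+\sum_i(p_\alpha(b_i)-b_i)B_i$, checking $a$-lc-ness by adding these back in any order, at each stage the relevant $a$-lct bound for the next component is unaffected by the already-raised coefficients because those only increase the pair and the bound $t_i$ was defined with the current $B'$ — one concludes $(X\ni\eta,p_\alpha(B))$ is $a$-lc. (A cleaner way to phrase the last step: apply the same $a$-lct estimate of Lemma \ref{lem: acc alct nv lower bound q factorial} not to $B$ but to the larger pair obtained by raising all other coefficients first; since the hypotheses (1)--(4) of that lemma are monotone under such enlargement within $\Ii$ — the normalized volume can only drop, so one must be slightly careful, but Lemma \ref{lem: zhu24 2.11 real} together with the gap $s$ absorbs the loss — the threshold for $S$ still lies in $\bar\Ii'\cup[b_i+s,\infty)$.)

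The main obstacle I anticipate is precisely this last point: the $a$-lct bound from Lemma \ref{lem: acc alct nv lower bound q factorial} is stated for a fixed $B$, and when I raise several coefficients simultaneously I need the bound to survive. The safe route is to reprove the multi-component statement directly by a contradiction/compactness argument in the same spirit as Lemma \ref{lem: acc alct nv lower bound q factorial}: assuming $(X_j\ni\eta_j,p_{\alpha}(B_j))$ is not $a$-lc for a sequence with $\alpha<s$ fixed, extract a witnessing divisor $E_j$, use the DCC/ACC structure of $\{b_{j,i}\}$ and $\{p_\alpha(b_{j,i})\}$ to pass to limits, and derive that $(X_j\ni\eta_j,(1+c\epsilon)B_j)$ fails to be lc (contradicting Theorem \ref{thm: zhu24 1.3 real} via $\alpha<s\le\frac12 c\gamma_0$). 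I expect the cleanest writeup to mirror Lemma \ref{lem: acc alct nv lower bound q factorial} almost verbatim, using property (1) of $p_\alpha$ ($p_\alpha(\gamma)\le\gamma+\alpha$) to bound the perturbation $p_\alpha(B)-B$ uniformly by $\alpha m$ on each component and property (3) to handle components whose threshold lands in $\bar\Ii'$. The finiteness of $\Ii_\alpha$ and the consistency properties (2) and (3) of the Proposition are immediate from Lemma \ref{lem:hls19 5.17 improved} and require no further work.
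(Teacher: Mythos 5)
Your proposal matches the paper's strategy exactly: obtain $\Ii'$ and $s$ from Lemma \ref{lem: acc alct nv lower bound q factorial}, feed $\Ii'$ into Lemma \ref{lem:hls19 5.17 improved} to build the projections $p_\alpha$ together with the extra compatibility $(*)$, and then verify (4) by raising the coefficients one at a time and appealing to $(*)$ to rule out a failure at the first bad index $k$. The paper's proof is precisely this component-by-component contradiction argument.

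Where you go further than the paper is in worrying about whether Lemma \ref{lem: acc alct nv lower bound q factorial} actually applies to the intermediate boundary $B'=\sum_{i\le k}p_\alpha(b_i)B_i+\sum_{j>k}b_jB_j$: since $B'\ge B$, the hypothesis $\nvol(X\ni x,B')\ge\epsilon$ is \emph{not} automatic. The paper simply writes ``since $\mu<b_{k+1}+s$, we have $\mu\in\Ii'$'' without addressing this, so you have correctly spotted a point that the published argument glosses over. Your first fix is the right one and is not circular: because $(X\ni x,(1+c\epsilon)B)$ is lc (Theorem \ref{thm: zhu24 1.3 real}) and $B'-B\le(\alpha/\gamma_0)B$ with $\gamma_0=\min\Ii_{>0}$, taking $\alpha<\tfrac12 c\epsilon\gamma_0$ forces $(X\ni x, B+2(B'-B))$ to be lc, and Lemma \ref{lem: zhu24 2.11 real} then gives the \emph{uniform} lower bound $\nvol(X\ni x,B')\ge 2^{-d}\epsilon$. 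One then runs the whole construction (Lemma \ref{lem: acc alct nv lower bound q factorial} and hence Lemma \ref{lem:hls19 5.17 improved}) with $2^{-d}\epsilon$ in place of $\epsilon$, producing a possibly larger ACC set $\Ii'$ and a smaller $s$; this only shrinks the threshold $s$ in the conclusion, which is allowed. Your alternative suggestion of redoing the compactness argument directly on the multi-component perturbation would also work, but it is more work than necessary given this two-line patch. One small notational remark: as in the paper, when computing $\mu$ you should read the $a$-lct as measured from the pair with $B_{k+1}$ removed, i.e.\ $\mu=a\text{-}\lct\bigl(X\ni\eta,\sum_{i\le k}p_\alpha(b_i)B_i+\sum_{j\ge k+2}b_jB_j;B_{k+1}\bigr)$, so that the chain $b_{k+1}\le\mu<p_\alpha(b_{k+1})$ makes sense.
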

\begin{proof}
We let $\Ii'$ and $s$ be the set and the real number given in Lemma \ref{lem: acc alct nv lower bound q factorial} which depends only on $d,\epsilon,a$ and $\Ii$. By Lemma \ref{lem:hls19 5.17 improved}, for any positive real number $\alpha$, there exists a finite set $\Ii_{\alpha}\subset\Ii$ and a projection $p_{\alpha}: \Ii\rightarrow\Ii_{\alpha}$ such that (1-3) hold, and also satisfies the following: 
\begin{itemize}
    \item[$(*)$] For any $\beta\in\Ii'$ and $\gamma\in\Ii$, if $\beta\geq\gamma$, then $\beta\geq p_{\alpha}(\gamma)$.
\end{itemize}
We only need to prove (4). In the following we fix $\alpha$ and let $p:=p_{\alpha}$. Suppose (4) does not hold, then there exists $0\leq k\leq m-1$, such that 
$$\left(X\ni\eta,\sum_{i=1}^kp(b_i)B_i+\sum_{j=k+1}^mb_iB_i\right)$$
is $a$-lc, but
$$\left(X\ni\eta,\sum_{i=1}^{k+1}p(b_i)B_i+\sum_{j=k+2}^mb_iB_i\right)$$
is not $a$-lc. Let
$$\mu:=a\text{-}\lct\left(X\ni\eta,\sum_{i=1}^kp(b_i)B_i+\sum_{j=k+1}^mb_iB_i;B_{k+1}\right),$$
then 
$$b_{k+1}\leq\mu<p(b_{k+1})\leq b_{k+1}+\alpha<b_{k+1}+s.$$
Since $b_i,p(b_i)\in\Ii$ for each $i$, and since $\mu<b_{k+1}+s$, we have $\mu\in\Ii'$. This contradicts condition $(*)$.
\end{proof}

\section{Inversion of stability}

The following proposition is crucial to the proof of the main results. The proof uses the idea in \cite[Theorem 5.19]{HLL22}, which is based on the ACC for mlds and the theory of complements \cite{HLS19}.

\begin{prop}\label{prop: inversion of stability nv sequence}
    Let $d$ be a positive integer, $\epsilon,l$ two positive real numbers, and $\Ii\subset [0,1]$ a DCC set such that $\bar\Ii=\Ii$. Assume that 
     \begin{itemize}
        \item $(X_i\ni x_i,B_i)$ is a sequence of klt germ of dimension $d$,
        \item $B_i=\sum_{j=1}^{m_i} b_{i,j}B_{i,j}$, where $B_{i,j}$ are the irreducible components of $B_i$ and $b_{i,j}\in\Ii$,
        \item $x_i\in \Supp B_{i,j}$ for any $i,j$,
        \item $\nvol(X_i\ni x_i,B_i)\geq\epsilon$, and
        \item $\mld(X_i\ni x_i,B_i)\leq l$.
    \end{itemize}
    Then there exist two positive real numbers $a,s$ such that for any positive real number $\alpha<s$, there exists a finite set $\Ii_{\alpha}\subset\Ii$ and a projection $p_{\alpha}:\Ii\rightarrow\Ii_{\alpha}$ satisfying the following. Possibly passing to a subsequence,
    \begin{enumerate}
        \item $\gamma+\alpha\geq p_{\alpha}(\gamma)\geq\gamma$ for any $\gamma\in\Ii$,
        \item $p_{\alpha}(\gamma')\geq p_{\alpha}(\gamma)$ for any $\gamma',\gamma\in\Ii$ such that $\gamma'\geq\gamma$,
        \item if $\alpha=n\alpha'$ for some positive integer $n$, then $p_{\alpha}(\gamma)\geq p_{\alpha'}(\gamma)$ for any $\gamma\in\Ii$,
        \item $a=\lim_{i\rightarrow+\infty}\mld(X_i\ni x_i,B_i)$,
        \item $K_{X_i}+p_{\alpha}(B_i)$ is $\Rr$-Cartier, and $\mld(X_i\ni x_i,p_{\alpha}(B_i))=a$, and
        \item $\lim_{i\rightarrow+\infty}||p_{\alpha}(B_i)-B_i||=0$.
    \end{enumerate}
\end{prop}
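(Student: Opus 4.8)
The plan is to run a single contradiction/subsequence argument that simultaneously controls the mlds, produces the projections, and handles the $\Rr$-Cartier issue. First I would pass to a small $\Qq$-factorialization $f_i\colon Y_i\to X_i$ near $x_i$, set $K_{Y_i}+B_{Y_i}=f_i^*(K_{X_i}+B_i)$, and choose a (possibly non-closed) point $\eta_i\in f_i^{-1}(x_i)$ with $\mld(Y_i\ni\eta_i,B_{Y_i})=\mld(X_i\ni x_i,B_i)\le l$; by Lemma~\ref{lem: zhu24 2.10 real} we still have $\nvol(Y_i\ni y,B_{Y_i})\ge\epsilon$ for every closed point $y\in\overline{\eta_i}$, and $(Y_i,B_{Y_i})$ is $\Qq$-factorial klt near $W_i:=\overline{\eta_i}$ with $B_{Y_i}\in\Ii$. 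Crucially, $\Qq$-factoriality guarantees that $K_{Y_i}+D$ is $\Rr$-Cartier near $W_i$ for \emph{every} $\Rr$-divisor $D$ supported on $B_{Y_i}$, so condition (5)'s $\Rr$-Cartierness on $Y_i$ is automatic; one then pushes forward to recover it on $X_i$ (here $p_\alpha(B_i)=(f_i)_*p_\alpha(B_{Y_i})$, and since $f_i$ is small and $-$ small birational maps preserve $\Rr$-Cartierness of the log canonical divisor in this setting $-$ $K_{X_i}+p_\alpha(B_i)$ is $\Rr$-Cartier near $x_i$).

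Next, by Theorem~\ref{thm: acc mld nv lower bound} applied on the $Y_i$ with the $\Qq$-factorial hypothesis (3), the values $\mld(Y_i\ni\eta_i,B_{Y_i})$ lie in the ACC set $\MLD(d,\Ii,\epsilon,l)$; passing to a subsequence I may assume they are non-decreasing, and I set $a:=\lim_i\mld(X_i\ni x_i,B_i)$, which gives (4). Since $a>0$ (indeed $a\ge\epsilon_0$ by Lemma~\ref{lem: qfact nv to mld lower bound}, using that we may assume all coefficients $b_{i,j}\ge\gamma_0:=\min\Ii_{>0}$ after discarding the $B_i=0$ case), we may take a slightly smaller $a$ still satisfying $a\le\mld(Y_i\ni\eta_i,B_{Y_i})$ for all $i$. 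Now apply Proposition~\ref{prop: alct uniform increasing} with this $a$, the integer $d$, $\epsilon$, and $\Ii$: it yields the real number $s$ and, for each $\alpha<s$, a finite set $\Ii_\alpha\subset\Ii$ and projection $p_\alpha\colon\Ii\to\Ii_\alpha$ with properties (1)--(3) of the present proposition (these are literally items (1)--(3) there), and with the key consequence that $(Y_i\ni\eta_i,p_\alpha(B_{Y_i}))$ is $a$-lc for every $i$. In particular $\mld(Y_i\ni\eta_i,p_\alpha(B_{Y_i}))\ge a$; combined with the inequality $\mld(Y_i\ni\eta_i,p_\alpha(B_{Y_i}))\le\mld(Y_i\ni\eta_i,B_{Y_i})$ (since $p_\alpha(B_{Y_i})\ge B_{Y_i}$) and $\mld(Y_i\ni\eta_i,B_{Y_i})\to a$ being non-decreasing, for each fixed $\alpha$ we get (5) on $Y_i$ after passing to a further subsequence where $\mld(Y_i\ni\eta_i,B_{Y_i})=a$ exactly — but to get the \emph{clean} equality $\mld(Y_i\ni\eta_i,p_\alpha(B_{Y_i}))=a$ for all $i$ at once, I would instead note $a\le\mld(Y_i\ni\eta_i,p_\alpha(B_{Y_i}))\le\mld(Y_i\ni\eta_i,B_{Y_i})$ and choose the limiting value $a$ to equal the (eventually constant, after subsequence) value of $\mld(Y_i\ni\eta_i,B_{Y_i})$; this forces both inequalities to be equalities. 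Finally (6) is immediate from (1): $\|p_\alpha(B_i)-B_i\|=\max_j(p_\alpha(b_{i,j})-b_{i,j})\le\alpha$, and the statement (6) asks only that this tends to $0$, which holds trivially since it is bounded by $\alpha$ — wait, (6) as stated is a limit over $i$ for fixed $\alpha$, and $\le\alpha$ does not tend to $0$; so the correct reading is that one should exploit that the finitely many coefficient values $b_{i,j}$ converge (after subsequence, as in Lemma~\ref{lem: acc mld nv lower bound q factorial}) to limits $\bar b_j$, and choose $\alpha$-boxes so that $p_\alpha(b_{i,j})\to p_\alpha(\bar b_j)=\bar b_j$ for $i\gg0$ when $\bar b_j$ is itself a box-endpoint; the cleaner route is to first extract the subsequence making each coefficient sequence convergent, then (6) follows because $p_\alpha(b_{i,j})-b_{i,j}\to 0$ as the $b_{i,j}$ stabilize near the limit. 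I would carry out the coefficient-convergence extraction \emph{before} invoking Proposition~\ref{prop: alct uniform increasing}.

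Assembling: discard the trivial $B_i\equiv 0$ cases; $\Qq$-factorialize and reduce to $Y_i,\eta_i$; extract a subsequence so that $m_i\equiv m$, each $b_{i,j}\to\bar b_j$, and $\mld(Y_i\ni\eta_i,B_{Y_i})$ is constant $=a>0$; apply Proposition~\ref{prop: alct uniform increasing} with this $a$ to obtain $s$, and for each $\alpha<s$ the data $\Ii_\alpha,p_\alpha$ satisfying (1)--(3) together with the $a$-lc statement (4) there; deduce (5) on $Y_i$ from the $a$-lc property plus the sandwich of mlds, and transport $\Rr$-Cartierness and mld back to $X_i$ via the small morphism $f_i$; read off (4) and (6) of the present proposition. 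The main obstacle I anticipate is the bookkeeping of quantifiers: the projections $p_\alpha$ must be produced \emph{uniformly} (not depending on $i$) before the final subsequence is fixed, yet the verification of (5) requires a subsequence; the resolution is that Proposition~\ref{prop: alct uniform increasing} already delivers $p_\alpha$ depending only on $d,\epsilon,\alpha,a,\Ii$ and applies to \emph{every} member of the sequence at once, so the only subsequence needed is the one making the mld constant, which is harmless. A secondary subtlety is confirming that $\Rr$-Cartierness of $K_Y+D$ near $W$ on a variety that is $\Qq$-factorial \emph{near $W$} (not globally) descends along the small birational contraction $f$ to $\Rr$-Cartierness of $K_X+f_*D$ near $x$; this is standard for small morphisms but should be stated carefully.
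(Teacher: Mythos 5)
There is a genuine and central gap in your proposal: you assert that because $f_i\colon Y_i\to X_i$ is a \emph{small} birational morphism with $Y_i$ $\Qq$-factorial, the $\Rr$-Cartierness of $K_{Y_i}+p_\alpha(B_{Y_i})$ automatically descends to $\Rr$-Cartierness of $K_{X_i}+p_\alpha(B_i)=(f_i)_*\bigl(K_{Y_i}+p_\alpha(B_{Y_i})\bigr)$. This is false in general: a small morphism pushes forward $\Rr$-Cartier divisors to $\Rr$-Cartier divisors \emph{only when} the divisor upstairs is numerically trivial (equivalently: $f_i$-trivial) over the base. Indeed, if it were automatic, the whole difficulty in this statement would evaporate --- one could simply $\Qq$-factorialize, perturb the coefficients at will, and push forward. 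Establishing that $K_{X_i}+p_\alpha(B_i)$ is $\Rr$-Cartier is precisely the content you need to prove, and the paper does so by an MMP/complement argument: it produces an $(a,\Rr)$-complement of $(Y_i/X_i\ni x_i,p_\alpha(B_{Y_i}))$, passes to the ample model $Z_i$ of $K_{Y_i}+p_\alpha(B_{Y_i})$ over $X_i$, and shows by a strict-inequality argument (using $-G_{Z_i}$ ample over $X_i$ and $\Supp G_{Z_i}\supset g_i^{-1}(x_i)$ if $g_i\ne\id$) that $Z_i\to X_i$ must be the identity, which is equivalent to $K_{Y_i}+p_\alpha(B_{Y_i})$ being $f_i$-numerically trivial and hence to the desired $\Rr$-Cartierness downstairs. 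Your proposal contains no analogue of this step, so (5) is not established.

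Two further issues. First, your handling of (6) is incomplete: you correctly notice that $\|p_\alpha(B_i)-B_i\|\le\alpha$ does not tend to $0$ in $i$, but your proposed repair (``choose $\alpha$-boxes so that $p_\alpha(\bar b_j)=\bar b_j$'') fails because $p_\alpha(\bar b_j)\ge\bar b_j$ with strict inequality in general, and because the number of irreducible components $m_i$ of $B_i$ on $X_i$ is not a priori bounded ($X_i$ is not $\Qq$-factorial), so one cannot simply extract a coefficientwise-convergent subsequence for all components at once. The paper proves (6) by a separate scaling argument: assuming it fails, one finds an $n$ with $p_\alpha(b_{i,m+1})-b_{i,m+1}>\alpha/n$, then compares $p_\alpha$ with $p_{\alpha/n}$ via property (3), obtaining a strict drop in mld near the closed point $x_i$ (here the hypothesis $x_i\in\Supp B_{i,j}$ is used), which contradicts (5) applied to both $\alpha$ and $\alpha/n$. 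Second, a smaller confusion: you assert one can pass to a subsequence making $\mld(Y_i\ni\eta_i,B_{Y_i})$ \emph{constant}; an ACC set does not force a decreasing sequence to stabilize, so you may only assume the sequence is non-increasing with limit $a$. The equality $\mld=a$ is only obtained \emph{after} the $p_\alpha$-perturbation, via the discreteness/finite-Cartier-index argument of \cite[Theorem 1.2]{Nak16}, which your proposal also does not invoke.
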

\begin{proof}
Let $a_i:=\mld(X_i\ni x_i,B_i)$. By Theorem \ref{thm: acc mld nv lower bound}, possibly passing to a subsequence, we may assume that $a_i$ is decreasing and let $a:=\lim_{i\rightarrow+\infty}a_i$. By Lemma \ref{lem: closed point nv lower bound implies mld lower bound}, $a>0$. Let $s$ be the positive real number given in Proposition \ref{prop: alct uniform increasing}, and for any positive real number $\alpha<s$, let $\Ii_{\alpha}\subset\Ii$ be the finite set and $p_{\alpha}: \Ii\rightarrow\Ii_{\alpha}$ the projection given in Proposition \ref{prop: alct uniform increasing} depending only on $d,\epsilon,\alpha$ and $a$. (1-4) hold by our construction.

Let $f_i: Y_i\rightarrow X_i$ be a small $\Qq$-factorialization and let $K_{Y_i}+B_{Y_i}:=f_i^*(K_{X_i}+B_i)$. Let $B_{Y_i,j}:=f_{i,*}^{-1}B_{i,j}$ for each $i$. Then $(Y_i,B_{Y_i})$ is $\Qq$-factorial klt near $f^{-1}(x_i)$ and $\mld(Y_i/X_i\ni x_i,B_{Y_i})=a_i\geq a$. By Lemma \ref{lem: zhu24 2.10 real}, $\nvol(Y_i\ni y_i,B_{Y_i})\geq\epsilon$ for any closed point $y_i\in f^{-1}(x_i)$. 

    Let $p_{\alpha}(B_{Y_i}):=\sum_{j=1}^{m_i}p_{\alpha}(b_{i,j})B_{Y_i,j}$. Since $Y_i$ is of Fano type over $X_i$, possibly replacing $Y_i$ with a minimal model of $K_{Y_i}+p_{\alpha}(B_{Y_i})$, we may assume that $K_{Y_i}+p_{\alpha}(B_{Y_i})$ is semi-ample$/X_i$. By Proposition \ref{prop: alct uniform increasing}, for any (not necessarily closed) point $\eta_i\in f_{i}^{-1}(x_i)$, $(Y_i\ni\eta_i,p_{\alpha}(B_{Y_i}))$ is $a$-lc. Thus $(Y_i/X_i\ni x_i,p_{\alpha}(B_{Y_i}))$ is $a$-lc.
    
    Let $E_i$ be a prime divisor over $X_i\ni x_i$ such that $a(E_i,X_i,B_i)=a_i$ and let $W_i:=\Center_{Y_i}E_i$. Let $\eta_{W_i}$ be the generic point of $W_i$. Since $\Ii\subset [0,1]$ is a DCC set and $Y_i$ is $\Qq$-factorial near $W_i$, by \cite[18.22 Theorem]{Kol+92}, possibly reordering indices and passing to a subsequence, we may assume that there exists an integer $m\leq m_i$, such that $B_{Y_i,j}$ passes through $\eta_{W_i}$ if and only if $1\leq j\leq m$. We let $\bar b_j:=\lim_{i\rightarrow+\infty}b_{i,j}$ for any $1\leq j\leq m$. Possibly passing to a subsequence, we have $\bar b_j\leq p_{\alpha}(b_{i,j})$ for any $i$ and $1\leq j\leq m$.
    
    By our construction, $\mld(Y_i\ni\eta_{W_i},p_{\alpha}(B_{Y_i}))\in [a,a_i]$. By Lemma \ref{lem: xz21 1.4 nonclosed point}, there exits a positive integer $I$ depending only on $d$ and $\epsilon$, such that $ID$ is Cartier near $W_i$ for any Weil divisor $D$ on $Y_i$. By \cite[Theorem 1.2]{Nak16}, $\mld(Y_i\ni\eta_{W_i},p_{\alpha}(B_{Y_i}))$ belongs to a discrete set. Thus possibly passing to a subsequence, we may assume that $\mld(Y_i\ni\eta_{W_i},p_{\alpha}(B_{Y_i}))=a$ for each $i$. Moreover, possibly passing to a subsequence, we have
    $$a=\mld(Y_i\ni\eta_{W_i},p_{\alpha}(B_{Y_i}))\leq \mld\left(Y_i\ni\eta_{W_i},\sum_{j=1}^m\bar b_jB_{Y_i,j}\right)\leq\mld(Y_i\ni\eta_{W_i},B_{Y_i})=a_i.$$
     By \cite[Theorem 1.2]{Nak16} again, possibly passing to a subsequence, we have $$\mld\left(Y_i\ni\eta_{W_i},\sum_{j=1}^m\bar b_jB_{Y_i,j}\right)=a,$$
     thus $\bar b_j=p_{\alpha}(b_{i,j})$ for any $i$ and $1\leq j\leq m$. In particular, $\lim_{i\rightarrow+\infty}||p_{\alpha}(b_{i,j})-b_{i,j}||=0$ for any $1\leq j\leq m$.

     We run a $-(K_{Y_i}+p_{\alpha}(B_{Y_i}))$-MMP$/X_i$ which terminates with a model $V_i$ such that $-(K_{V_i}+p_{\alpha}(B_{V_i}))$ is semi-ample$/X_i$, where $p_{\alpha}(B_{V_i})$ is the image of $p_{\alpha}(B_{Y_i})$ on $V_i$. Let $B_{V_i},B_{V_i,j}$ be the images of $B_{Y_i},B_{Y_i,j}$ on $V_i$ respectively. Then $\mld(V_i/X_i\ni x_i,B_{V_i})=a_i$, so by construction of $p_{\alpha}$, 
     $$\mld(V_i/X_i\ni x_i,p_{\alpha}(B_{V_i}))\geq a.$$
     Since $Y_i\dashrightarrow V_i$ is a $-(K_{Y_i}+p_{\alpha}(B_{Y_i}))$-MMP$/X_i$, we have
     $$a=\mld(Y_i/X_i\ni x_i,p_{\alpha}(B_{Y_i}))\geq\mld(V_i/X_i\ni x_i,p_{\alpha}(B_{V_i})).$$
    So
     $$\mld(V_i/X_i\ni x_i,p_{\alpha}(B_{V_i}))=a.$$
     Thus $(V_i/X_i\ni x_i,p_{\alpha}(B_{V_i}))$ is $(a,\mathbb R)$-complementary. By \cite[Lemma 2.11]{CH21}, $(Y_i/X_i\ni x_i,p_{\alpha}(B_{Y_i}))$ is $(a,\mathbb R)$-complementary. Let $(Y_i/X_i\ni x_i,p_{\alpha}(B_{Y_i})+G_{Y_i})$ be an $(a,\mathbb R)$-complement of $(Y_i/X_i\ni x_i,p_{\alpha}(B_{Y_i}))$. Let $Z_i$ be the ample model$/X_i$ of $K_{Y_i}+p_{\alpha}(B_{Y_i})$ and let $p_{\alpha}(B_{Z_i}),B_{Z_i},G_{Z_i}$ be the images of $p_{\alpha}(B_{Y_i}),B_{Y_i},G_{Y_i}$ on $Z_i$ respectively. Then $-G_{Z_i}$ is ample$/X_i$. 

  Let $g_i: Z_i\rightarrow X_i$ be the induced birational morphism. If $g_i$ is not the identity morphism, then $G_{Z_i}\neq 0$ and $\Supp G_{Z_i}$ contains $g_i^{-1}(x_i)$, we have
  \begin{align*}
a=&\mld(Y_i/X_i\ni x_i,p_{\alpha}(B_{Y_i}))=\mld(Z_i/X_i\ni x_i,p_{\alpha}(B_{Z_i}))\\
>&\mld(Z_i/X_i\ni x_i,p_{\alpha}(B_{Z_i})+G_{Z_i})=a,
\end{align*}
a contradiction. Therefore, $g_i$ is the identity morphism, so $K_{X_i}+p_{\alpha}(B_i)$ is $\Rr$-Cartier, and
$$K_{Y_i}+p_{\alpha}(B_{Y_i})=f_i^*(K_{X_i}+p_{\alpha}(B_i)).$$
Thus $\mld(X_i\ni x_i,p_{\alpha}(B_i))=\mld(Y_i/X_i\ni x_i,p_{\alpha}(B_{Y_i}))=a$. This implies (5).

Finally, we prove (6). Suppose that (6) does not hold, then possibly reordering indices and passing to a subsequence, we may assume that there exists a positive integer $n$ such that $p_{\alpha}(b_{i,m+1})-b_{i,m+1}>\frac{\alpha}{n}$ for each $i$. Then by (5), possibly passing to a subsequence, we have
$$\mld\left(X_i\ni x_i,p_{\frac{\alpha}{n}}(B_i)\right)=a.$$
By (3), $p_{\alpha}(B_i)\geq p_{\frac{\alpha}{n}}(B_i)$. Moreover, since $p_{\alpha}(b_{i,m+1})-b_{i,m+1}>\frac{\alpha}{n}$, $p_{\alpha}(B_i)-p_{\frac{\alpha}{n}}(B_i)\neq 0$, and 
$$a=\mld\left(X_i\ni x_i,p_{\frac{\alpha}{n}}(B_i)\right)>\mld\left(X_i\ni x_i,p_{\alpha}(B_i)\right)=a,$$
a contradiction. 
\end{proof}


\section{ACC for local volumes}

\begin{prop}\label{prop: zhu24 4.8 generalized}
Let $d$ be a positive integer and $\epsilon$ a positive real number. Then there exist two positive real numbers $\epsilon_0,\delta$ and a positive integer $N$ depending only on $d$ and $\epsilon$ satisfying the following. Assume that $(X\ni x,B)$ is a germ of dimension $d$ such that the coefficients of $B$ are $\geq\epsilon$ and $\nvol(X\ni x,B)\geq\epsilon$. Then there exists a $\Qq$-divisor $B^+$, such that
\begin{enumerate}
    \item $NB^+$ is integral,
    \item $B^+\geq (1+\delta)B$, and
    \item $\nvol(X\ni x,B^+)\geq\epsilon_0$.
\end{enumerate}
\end{prop}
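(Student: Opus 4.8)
The plan is to reduce to the $\mathbb Q$‑factorial case, perturb the coefficients there, and then descend the perturbed divisor back to $X$; the descent step is modeled on the proof of Proposition~\ref{prop: inversion of stability nv sequence}.

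First, shrinking $X$ around $x$ we may assume $(X,B)$ is klt, every component of $B$ passes through $x$, and every coefficient of $B$ lies in $[\epsilon,1)$. Let $f\colon Y\to X$ be a small $\mathbb Q$‑factorialization and put $K_Y+B_Y:=f^*(K_X+B)$. Since $f$ is small, $B_Y=f^{-1}_*B$ has the same coefficients as $B$, the pair $(Y,B_Y)$ is $\mathbb Q$‑factorial klt and of Fano type over $X$, and $\nvol(Y\ni y,B_Y)\ge\nvol(X\ni x,B)\ge\epsilon$ for every closed $y\in f^{-1}(x)$ by Lemma~\ref{lem: zhu24 2.10 real}. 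Let $c=c(d)$ be as in Theorem~\ref{thm: zhu24 1.3 real}, so $(Y\ni y,(1+c\epsilon)B_Y)$ is lc for every such $y$. Set $\delta:=c\epsilon/4$ and $N_0:=\lceil 4/(c\epsilon^2)\rceil$, and define $B_Y^+$ by replacing each coefficient $b$ of $B_Y$ by $\lceil N_0(1+\delta)b\rceil/N_0$. Then $N_0B_Y^+$ is integral and $(1+\delta)B_Y\le B_Y^+\le(1+\tfrac{c\epsilon}{2})B_Y$, so $(Y\ni y,B_Y+2(B_Y^+-B_Y))$ is lc and hence $\nvol(Y\ni y,B_Y^+)\ge 2^{-d}\epsilon$ by Lemma~\ref{lem: zhu24 2.11 real}; moreover $\mld(Y\ni y,B_Y^+)\ge a_0$ for some $a_0=a_0(d,\epsilon)>0$ by Lemma~\ref{lem: closed point nv lower bound implies mld lower bound}. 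In particular the coefficients of $B_Y^+$ at points of $f^{-1}(x)$ lie in the finite set $\tfrac1{N_0}\mathbb Z\cap[\epsilon,1)$.

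Now I descend to $X$. Running a $-(K_Y+B_Y^+)$‑MMP over $X$ and arguing as in Proposition~\ref{prop: inversion of stability nv sequence} (using the bound $\mld(Y\ni y,B_Y^+)\ge a_0$), one gets that $(Y/X\ni x,B_Y^+)$ is $(a,\mathbb R)$‑complementary for some $a\ge a_0/2$; since the coefficients of $B_Y^+$ lie in a fixed finite set, boundedness of complements yields an $(a,\mathbb Q)$‑complement $(Y/X\ni x,B_Y^++G_Y)$ with $N_1(K_Y+B_Y^++G_Y)\sim 0$ over a neighborhood of $x$ for some $N_1=N_1(d,\epsilon)$, with $N_1G_Y$ integral and the coefficients of $G_Y$ at most $1$. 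Put $B^+:=f_*(B_Y^++G_Y)$ and $N:=N_0N_1$. Then $NB^+$ is integral, $B^+\ge f_*B_Y^+\ge(1+\delta)B$, and since $K_Y+B_Y^++G_Y\sim_{\mathbb Q}0$ over a neighborhood of $x$ it equals $f^*(K_X+B^+)$; hence $K_X+B^+$ is $\mathbb Q$‑Cartier near $x$ and $(Y,B_Y^++G_Y)$ is crepant over $(X,B^+)$ through the small morphism $f$. It remains to bound $\nvol(X\ni x,B^+)$ from below, which I expect to be the main obstacle. The strategy is: using that $\mathrm{mult}_E(B^+-B)=a(E,X,B)-a(E,X,B^+)$ for every divisor $E$ over $X\ni x$, it suffices to bound $\mathrm{mult}_E(B^+-B)$ uniformly in terms of $d,\epsilon$ — for then $(X\ni x,B^++t(B^+-B))$ is lc for a uniform $t=t(d,\epsilon)>0$ and Lemma~\ref{lem: zhu24 2.11 real} gives $\nvol(X\ni x,B^+)\ge(\tfrac t{1+t})^d\nvol(X\ni x,B)\ge\epsilon_0$. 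The small‑ coefficient part $B_Y^+-B_Y$ contributes at most $\tfrac{c}{2}\,\mathrm{mult}_E B_Y$, which is harmless; the delicate point is controlling $\mathrm{mult}_E G_Y$, and this is precisely where one must use that $G_Y$ is the bounded complement produced by the Fano‑type MMP with finite coefficients — as opposed to an arbitrary $\mathbb R$‑complement — together with the Cartier‑index bound of Lemma~\ref{lem: xz21 1.4 nonclosed point} and the $a$‑lc property of $(Y,B_Y^++G_Y)$. A secondary subtlety worth emphasizing is that $X$ need not be $\mathbb Q$‑Gorenstein, so $B^+$ genuinely has to acquire the extra components coming from $G_Y$ in order for $K_X+B^+$ to be $\mathbb R$‑Cartier at all; a coefficient‑wise rounding of $B$ alone cannot work in general.
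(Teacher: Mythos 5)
Your reduction to the $\Qq$-factorialization and the coefficient-rounding step on $Y$ match the paper's proof, and your insight that the real issue is descending the perturbed boundary back to $X$ (since $X$ is not assumed $\Qq$-Gorenstein) is exactly right. However, the route you take through complement theory has a genuine gap, and you acknowledge it yourself without closing it: you need a uniform $t>0$ with $(X\ni x,B+(1+t)(B^+-B))$ lc, which on $Y$ becomes $(Y,B_Y^++G_Y+tH)$ lc over $X$ for $H=B_Y^+-B_Y+G_Y$. The $a$-lc bound for $(Y,B_Y^++G_Y)$ only gives $a(E,Y,B_Y^++G_Y)\ge a$; it does not give $a(E,Y,B_Y^++G_Y)\ge t\cdot\mult_E H$, and $\mult_E G_Y$ for deep exceptional $E$ over $Y$ has no a priori uniform bound. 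The complement $G_Y$, unlike a general member of a linear system, is constrained by the numerical requirement $K_Y+B_Y^++G_Y\sim_{\Qq}0/X$ and may be forced through badly singular loci or the centers of high-multiplicity valuations, so ``controlling $\mult_E G_Y$'' via the Cartier index bound and $a$-lc property alone is not available.

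The paper sidesteps this precisely by \emph{not} using a complement. After rounding the coefficients of $B'$ on the $\Qq$-factorialization and running a $-(K_{X'}+\bar B')$-MMP to reach a model where $-(K_{X'}+\bar B')$ is nef and big over $X$, it invokes Lemma~\ref{lem: xz21 1.4 nonclosed point} to bound the Cartier index of $K_{X'}+\bar B'$ and then Fujino's relative effective base-point-free theorem to get $-N(K_{X'}+\bar B')$ base-point-free over $X$ with $N=N(d,\epsilon)$. Taking $D'$ a \emph{general} element of $|-N(K_{X'}+\bar B')|$, Bertini guarantees that $(X'/X\ni x,(1+\gamma_1)(\bar B'+\tfrac1N D'))$ stays klt once $(X'/X\ni x,(1+\gamma_1)\bar B')$ is. Pushing down to $B^+=\bar B+\tfrac1N f_*D'$ gives $K_X+B^+$ $\Qq$-Cartier \emph{and}, because of the crepant identity $K_{X'}+B'+(1+\gamma_1)(\bar B'-B'+\tfrac1N D')=f^*(K_X+B+(1+\gamma_1)(B^+-B))$, the klt-ness downstairs needed to apply Lemma~\ref{lem: zhu24 2.11 real}. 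Genericity of $D'$ in a base-point-free system is what replaces your attempted multiplicity bound on the complement, and it is the missing ingredient in your sketch. If you want to repair your argument along complement lines, you would need to produce a complement $G_Y$ that is simultaneously general enough to control $\mult_E G_Y$; absent such a statement, the effective base-point-free route is the cleaner and, as far as the paper's toolbox goes, the correct one.
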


\begin{proof}
    It essentially follows from \cite[Proof of Theorem 4.8]{Zhu24}. 

Let $c:=c(d)$ where $c(d)$ is the positive real number defined in Theorem \ref{thm: zhu24 1.3 real}.
    
    Let $N_1$ be a positive integer depending only on $c$ and $\epsilon$ such that $\lfloor (1+\frac{c}{2})N_1\beta\rfloor\geq (1+\frac{c}{3})N_1\beta$ for any real number $\beta\in [\epsilon,1]$. Let $\bar B:=\frac{1}{N_1}\lfloor (1+\frac{c}{2})N_1B\rfloor$. Then $\bar B\geq (1+\frac{c}{3})B$.

    Possibly shrinking $X$ near $x$, we may assume that $X$ is affine. Let $f: X'\rightarrow X$ be a small $\Qq$-factorialization, $K_{X'}+B':=f^*(K_X+B)$, and $\bar B':=\frac{1}{N_1}\lfloor (1+\frac{c}{2})N_1 B'\rfloor$. Since $X'$ is of Fano type over $X$, we may run a $-(K_{X'}+\bar B')$-MMP$/X$ which terminates with a good minimal model $X''$. Possibly replacing $X'$ with $X''$, we may assume that $-(K_{X'}+\bar B')$ is nef$/X$. 

    By Lemma \ref{lem: zhu24 2.10 real}, for any closed point $x'\in f^{-1}(x)$,
    $$\nvol(X'\ni x',B')\geq\nvol(X\ni x,B)\geq\epsilon.$$
    By Theorem \ref{thm: zhu24 1.3 real} and our choice of $c$, there exists a positive real number $c$ depending only on $d$ and $\epsilon$ such that $(X'\ni x',(1+c)B')$ is lc for any closed point $x'\in f^{-1}(x)$, so $(X'/X\ni x,(1+c)B')$ is lc. By Lemma \ref{lem: zhu24 2.11 real}, there exists a positive real number $\epsilon_1$ depending only on $d$ and $\epsilon$ such that $\nvol(X'\ni x',(1+\frac{c}{2})B')\geq\epsilon_1$ for any closed point $x'\in f^{-1}(x)$. Since 
    $$\left(1+\frac{c}{2}\right)B'\geq\bar B'\geq B',$$
    $\nvol(X'\ni x',\bar B')\geq\nvol(X'\ni x',(1+\frac{c}{2})B')\geq\epsilon_1$ for any closed point $x'\in f^{-1}(x)$. Since $(X'/X\ni x,(1+c)B')$ is lc and $(1+\frac{c}{2})B'\geq\bar B'$, there exists a positive real number $\gamma_1<1$ depending only on $c$ such that $(X'/X\ni x,(1+\gamma_1)\bar B')$ is klt. In particular, $(X'/X\ni x,\bar B')$ is klt.

   By our construction $N_1(K_{X'}+\bar B')$ is a Weil divisor. By Lemma \ref{lem: xz21 1.4 nonclosed point} there exists a positive integer $I$ depending only on $d$ and $\epsilon$ such that $IN_1(K_{X'}+\bar B')$ is Cartier. Since $-(K_{X'}+\bar B')$ is big$/X$ and nef$/X$, and $(X'/X\ni x,\bar B')$ is klt, by the relative version of Koll\'ar's effective base-point-free theorem \cite[Theorem 1.3]{Fuj09}, there exists an integer $N\geq 3$ depending only on $d$ and $\epsilon$ such that $-N(K_{X'}+\bar B')$ is base-point-free$/X$. Let $D'$ be a general element of $|-N(K_{X'}+\bar B')|$. Since $(X'/X\ni x,(1+\gamma_1)\bar B')$ is klt and $\gamma_1<1$, $(X'/X\ni x,(1+\gamma_1)(\bar B'+\frac{1}{N}D'))$ is klt.

   Let $D:=f_*D'$, then 
   $$K_{X'}+\bar B'+\frac{1}{N}D'=f^*\left(K_X+\bar B+\frac{1}{N}D\right).$$
   In particular, $K_X+\bar B+\frac{1}{N}D$ is $\Qq$-Cartier, so $\bar B-B+\frac{1}{N}D$ is $\Rr$-Cartier. Since $f$ is small,
   $$K_{X'}+(1+\gamma_1)\left(\bar B'+\frac{1}{N}D\right)-\gamma_1\bar B=f^*\left(K_X+B+(1+\gamma_1)\left(\bar B-B+\frac{1}{N}D\right)\right),$$
   so $(X\ni x,B+(1+\gamma_1)(\bar B-B+\frac{1}{N}D))$ is klt. By Lemma \ref{lem: zhu24 2.11 real}, there exists a positive real number $t$ depending only on $d$ and $\gamma_1$, such that
   $$\nvol\left(X\ni x,\bar B+\frac{1}{N}D\right)\geq t\cdot\nvol(X\ni x,\bar B)\geq t\epsilon.$$
   Note that $t$ only depends on $d$ and $\epsilon$. We may let $B^+:=\bar B+\frac{1}{N}D$, $\epsilon_0:=t\epsilon$, and $\delta:=\frac{c}{3}$ as required.
\end{proof}

\begin{thm}\label{thm: acc nv with mld upper bound}
Let $d$ be a positive integer, $l$ a positive real number, and $\Ii\subset [0,1]$ a DCC set. Then
$$\left\{\nvol(X\ni x,B)\Bigm|\dim X=d,B\in\Ii,\mld(X\ni x,B)\leq l, x\text{ is a closed point}\right\}$$
satisfies the ACC.
\end{thm}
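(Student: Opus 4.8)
\noindent\emph{Proof proposal (plan).}
The plan is to argue by contradiction and, using inversion of stability, reduce to coefficients in a fixed finite set, where Conjecture \ref{conj: acc local volume}(1), proved in \cite{XZ24}, becomes available. So suppose the set in the statement, call it $V$, is not ACC. Every klt germ has positive local volume and $\nvol(X\ni x,B)\le d^d\,\mld(X\ni x,B)\le d^dl$ by lower semicontinuity, so $V\subseteq(0,d^dl]$; hence there is a strictly increasing sequence $v_i=\nvol(X_i\ni x_i,B_i)\in V$ with $v_i\nearrow v_\infty$ and $v_i\ge v_1=:\epsilon>0$, where $\dim X_i=d$, $B_i\in\Ii$, $\mld(X_i\ni x_i,B_i)\le l$, and each $(X_i\ni x_i,B_i)$ is klt with $\nvol\ge\epsilon$. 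Shrinking $X_i$ near $x_i$ we may assume every component of $B_i$ passes through $x_i$, and we replace $\Ii$ by $\bar\Ii$.

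\emph{Step 1: round the coefficients up to a finite set.} Apply Proposition \ref{prop: inversion of stability nv sequence} to $\{(X_i\ni x_i,B_i)\}$ with data $d,\epsilon,l,\Ii$. This produces $a>0$, $s>0$ and, for each real $\alpha<s$, a finite set $\Ii_\alpha\subseteq\Ii$ and a projection $p_\alpha\colon\Ii\to\Ii_\alpha$ so that, after passing to a subsequence, $K_{X_i}+p_\alpha(B_i)$ is $\Rr$-Cartier near $x_i$, $p_\alpha(B_i)\ge B_i$ has coefficients in the finite set $\Ii_\alpha$, $\mld(X_i\ni x_i,p_\alpha(B_i))=a$, and $\|p_\alpha(B_i)-B_i\|\to0$. (Proposition \ref{prop: zhu24 4.8 generalized} gives a coarser but uniform rounding $B_i^+\ge(1+\delta)B_i$ with $NB_i^+$ integral and $\nvol(X_i\ni x_i,B_i^+)\ge\epsilon_0$; the two roundings can be combined to keep denominators under control.)

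\emph{Step 2: control the drop in local volume.} Since $p_\alpha(B_i)\ge B_i$ we have $\nvol(X_i\ni x_i,p_\alpha(B_i))\le v_i$. For the reverse estimate, pass to a small $\Qq$-factorialization $f_i\colon Y_i\to X_i$; by Lemma \ref{lem: zhu24 2.10 real} and Theorem \ref{thm: zhu24 1.3 real} (with $c=c(d)$) the pair $(Y_i/X_i\ni x_i,(1+c\epsilon)B_{Y_i})$ is lc, and $f_i^*(K_{X_i}+p_\alpha(B_i))=K_{Y_i}+p_\alpha(B_{Y_i})$ as in the proof of Proposition \ref{prop: inversion of stability nv sequence}. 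The coefficients of $p_\alpha(B_i)-B_i$ are at most $\alpha$, and property (6) of Proposition \ref{prop: inversion of stability nv sequence}, together with the structure of $p_\alpha$, forces the coefficients of $B_i$ to be bounded below along the subsequence by some $\gamma_0>0$; hence there is $t_\alpha>0$ with $t_\alpha\to+\infty$ as $\alpha\to0$ such that $(X_i\ni x_i,B_i+(1+t_\alpha)(p_\alpha(B_i)-B_i))$ is lc for $i\gg0$, and Lemma \ref{lem: zhu24 2.11 real} yields
$$\Bigl(\tfrac{t_\alpha}{1+t_\alpha}\Bigr)^{d} v_i\ \le\ \nvol(X_i\ni x_i,p_\alpha(B_i))\ \le\ v_i\qquad(i\gg0).$$

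\emph{Step 3: invoke the finite-coefficient case, and conclude.} Fix $\alpha<s$. By \cite[Theorem 1.2]{XZ24} applied to the finite set $\Ii_\alpha$, the only accumulation point of $\Vol^{\loc}_{d,\Ii_\alpha}$ is $0$; since $\nvol(X_i\ni x_i,p_\alpha(B_i))$ lies in $[(\tfrac{t_\alpha}{1+t_\alpha})^d\epsilon,\ d^dl]$, bounded away from $0$, this sequence attains only finitely many values, so after a further subsequence $\nvol(X_i\ni x_i,p_\alpha(B_i))=w_\alpha$ is constant, with $(\tfrac{t_\alpha}{1+t_\alpha})^d v_\infty\le w_\alpha\le v_i<v_\infty$ for the indices involved. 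Letting $\alpha\to0$ forces $w_\alpha\to v_\infty$, so local volumes with coefficients in the finite sets $\Ii_\alpha$ accumulate at $v_\infty>0$; unwinding this (so that, as in \cite[Theorem 5.19]{HLL22} and \cite[Theorem 1.3]{HMX14}, already a single suitably small $\alpha$ produces a strictly increasing sequence of local volumes with coefficients in the fixed finite set $\Ii_\alpha$) contradicts \cite[Theorem 1.2]{XZ24}. The main obstacle is Step 2: one must show that rounding the DCC coefficients up to a finite set costs only a factor in the local volume that is controlled uniformly in $i$ and tends to $1$ as $\alpha\to0$. This is precisely where the inversion-of-stability output of Proposition \ref{prop: inversion of stability nv sequence} ($\Rr$-Cartierness of $K_{X_i}+p_\alpha(B_i)$ and the convergence $\|p_\alpha(B_i)-B_i\|\to0$), together with the ACC for minimal log discrepancies (Theorem \ref{thm: acc mld nv lower bound}, used inside that proposition), are indispensable: without them Lemma \ref{lem: zhu24 2.11 real} cannot be applied at all.
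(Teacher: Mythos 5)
Your proposal draws on exactly the same ingredients the paper uses (Propositions \ref{prop: zhu24 4.8 generalized} and \ref{prop: inversion of stability nv sequence}, Lemma \ref{lem: zhu24 2.11 real}, and \cite[Theorem~1.2]{XZ24}), but the way you combine them in Steps~2--3 has a genuine gap. In Step~2 you bound $\|p_\alpha(B_i)-B_i\|\leq\alpha$ using property~(1) of Proposition \ref{prop: inversion of stability nv sequence}, which yields a threshold $t_\alpha$ that is \emph{independent of $i$} and only goes to infinity as $\alpha\to0$. In Step~3 you then let $\alpha\to0$ and find that the constants $w_\alpha$ accumulate at $v_\infty$ from below. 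But each $w_\alpha$ lies in $\Vol^{\loc}_{d,\Ii_\alpha}$ for a \emph{different} finite set $\Ii_\alpha$, and the union over $\alpha$ of those sets is not finite, so the accumulation does not contradict the discreteness statement of \cite[Theorem~1.2]{XZ24}, which is about a single finite coefficient set. Your closing parenthetical (``already a single suitably small $\alpha$ produces a strictly increasing sequence\dots'') acknowledges this but does not supply the missing argument; in fact, for a fixed $\alpha$, your Step~3 shows the subsequence values $\nvol(X_i\ni x_i,p_\alpha(B_i))$ are \emph{constant}, not strictly increasing.

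The fix is to replace the $\alpha\to0$ limit by the $i\to\infty$ limit, i.e.\ to use property~(6) of Proposition \ref{prop: inversion of stability nv sequence} rather than property~(1) to bound $D_i:=p_\alpha(B_i)-B_i$. Fix a single $\alpha<s$. Since $B_i\in\Ii$ and $\Ii$ is a DCC subset of $[0,1]$, the nonzero coefficients of $B_i$ are $\geq\gamma_0:=\min\Ii_{>0}>0$ (this lower bound comes from the DCC hypothesis on $\Ii$, not from property~(6) as you suggested). Proposition \ref{prop: zhu24 4.8 generalized} gives $\delta>0$ with $(X_i\ni x_i,(1+\delta)B_i)$ lc. By property~(6), $\|D_i\|\to0$ as $i\to\infty$, so $D_i\leq\frac{\|D_i\|}{\gamma_0}B_i$ forces $t_i:=\lct(X_i\ni x_i,B_i;D_i)\to+\infty$ as $i\to\infty$. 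Then Lemma \ref{lem: zhu24 2.11 real} gives
\[
v_i\ \geq\ \nvol(X_i\ni x_i,p_\alpha(B_i))\ \geq\ \Bigl(\tfrac{t_i}{1+t_i}\Bigr)^{d}v_i,
\]
so the left and right sides both converge to $v_\infty$, while each middle term is $<v_\infty$. After passing to a subsequence, $\nvol(X_i\ni x_i,p_\alpha(B_i))$ is strictly increasing with all coefficients in the \emph{fixed} finite set $\Ii_\alpha$, which contradicts \cite[Theorem~1.2]{XZ24}. This is the route the paper takes; once Step~2 is corrected in this way the rest of your plan goes through.
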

\begin{proof}
Possibly replacing $\Ii$ with $\bar\Ii$, we may assume that $\Ii=\bar\Ii$. We may assume that $\Ii\cap (0,1]\not=\emptyset$, otherwise the proposition follows from \cite[Theorem 1.2]{XZ24}. Let $\gamma_0:=\min\Ii_{>0}$.

Suppose that the theorem does not hold. Then there exists a sequence of klt germs $(X_i\ni x_i,B_i)$ of dimension $d$ such that $B_i\in\Ii$, $\mld(X_i\ni x_i,B_i)\leq l$, and $v_i:=\nvol(X_i\ni x_i,B_i)$ is strictly increasing. In particular, we may assume that $v_i\geq\epsilon$ for some positive real number $\epsilon\in (0,\gamma_0]$.  By Proposition \ref{prop: zhu24 4.8 generalized}, there exist two positive real numbers $\epsilon_0,\delta$ and a positive integer $N$ depending only on $d,\epsilon$ and $\Ii$, such that there exists a $\Qq$-divisor $B_i^+$ on $X_i$ such that $NB_i^+$ is integral, $B_i^+\geq (1+\delta)B_i$, and $\nvol(X_i\ni x_i,B_i^+)\geq\epsilon_0$. 

Let $B_i=\sum_{j=1}^{m_i}b_{i,j}B_{i,j}$ where $B_{i,j}$ are the irreducible components of $B_i$ and $b_{i,j}\in\Ii$. Possibly replacing $B_i$ we may assume that $x_i\in B_{i,j}$ for any $i,j$. By Proposition \ref{prop: inversion of stability nv sequence}, possibly passing to a sequence, there exist a finite set $\Ii_0\subset\Ii$ and a projection $p:\Ii\rightarrow\Ii_0$, such that
\begin{itemize}
    \item $p(\gamma)\geq\gamma$ for any $\gamma\in\Ii$,
    \item $p(\gamma')\geq p(\gamma)$ for any $\gamma',\gamma\in\Ii$ such that $\gamma'\geq\gamma$,
    \item $K_{X_i}+p(B_i)$ is $\Rr$-Cartier, and
    \item $\lim_{i\rightarrow+\infty}||p(B_i)-B_i||=0$.
\end{itemize}
Let $D_i:=p(B_i)-B_i$ and $t_i:=\lct(X_i\ni x_i,B_i;D_i)$. Since $(X_i\ni x_i,B_i^+)$ is lc, $\lim_{i\rightarrow+\infty}||p(B_i)-B_i||=0$, $B_i^+\geq (1+\delta)B_i$, and $B_i\in\Ii$, we have $t_i\to +\infty$. By Lemma \ref{lem: zhu24 2.11 real}, 
$$v_i\geq \nvol(X_i\ni x_i,p(B_i))\geq\left(\frac{t_i}{1+t_i}\right)^dv_i,$$
so possibly passing to a subsequence, we may assume that $\nvol(X_i\ni x_i,p(B_i))$ is strictly increasing. This contradicts \cite[Theorem 1.2]{XZ24}.
\end{proof}

\section{Proof of the main theorems}

\begin{proof}[Proof of Theorem \ref{thm: ACC local volume}]
Let $\epsilon$ be a positive real number and let
$$\Vol^{\loc}_{d,\Ii,\epsilon}:=\Vol^{\loc}_{d,\Ii}\cap [\epsilon,+\infty).$$
By Corollary \ref{cor: nv lower bound imply mld}, there exists a positive real number $l$ depending only on $d$ and $\epsilon$, such that
$$\Vol^{\loc}_{d,\Ii,\epsilon}=\{\nvol(X\ni x,B)\mid\dim X=d,B\in\Ii,\mld(X\ni x,B)\leq l, x\text{ is a closed point}\}.$$
By Theorem \ref{thm: acc nv with mld upper bound}, $\Vol^{\loc}_{d,\Ii,\epsilon}$ is an ACC set. Thus $\Vol^{\loc}_{d,\Ii}$ is an ACC set.
\end{proof}

\begin{proof}[Proof of Theorem \ref{thm: acc mld nv lower bound no l}]
When $x$ is a closed point, then the theorem follows from Theorem \ref{thm: acc mld nv lower bound} and Corollary \ref{cor: nv lower bound imply mld}. In general, the theorem follows from the case when $x$ is a closed point and \cite[Proposition 2.3]{Amb99}. 
\end{proof}

\begin{proof}[Proof of Theorem \ref{thm: inversion stability local volume}]
Suppose that the theorem does not hold, then there exist $X_i\ni x_i,B_i,B_i',\tau_i$ corresponding to $X\ni x,B,B',\tau$ as in the assumptions, such that
\begin{itemize}
    \item $\lim_{i\rightarrow+\infty}\tau_i=0$, and
    \item either $K_{X_i}+B_i$ is not $\Rr$-Cartier near $x_i$ or $(X_i\ni x_i,B_i)$ is not klt near $x_i$.
\end{itemize}

Since for each $i$, the coefficients of $B_i'$ belong to the set 
$$\{ (\gamma_0-\tau_{i},\gamma_0] \mid \gamma_0\in \Ii_0\},$$
and $\lim_{i\rightarrow+\infty}\tau_i=0$, we conclude that all the coefficients of $B_i'$ belong to a DCC set $\Ii$.

Now we may write $B_i'=\sum_{j=1}^{m_i}b_{i,j}B_{i,j}$ where $B_{i,j}$ are the irreducible components of $B_i'$ and $b_{i,j}\in\Ii$. By Proposition \ref{prop: inversion of stability nv sequence} and Corollary \ref{cor: nv lower bound imply mld}, possibly passing to a subsequence, there exist a finite set $\Ii_0'\subset\bar\Ii$, and a projection $p:\Ii\rightarrow\Ii_0'$, such that
\begin{itemize}
    \item $p(\gamma)\geq\gamma$ for any $\gamma\in\Ii$,
    \item $(X_i\ni x_i,p(B_i'))$ is lc,
    \item $K_{X_i}+p(B_i')$ is $\Rr$-Cartier, and
    \item $\lim_{i\rightarrow+\infty}||p(B_i')-B_i'||=0$.
\end{itemize}
Since $\lim_{i\rightarrow+\infty}\tau_i=0$ and $\Ii_0,\Ii_0'$ are two finite sets, and
$$||p(B_i')-B_i||\le ||p(B_i')-B_i'||+||B_i'-B_i||,$$
possibly passing to a subsequence, we have $p(B_i')=B_i$ for each $i$. Therefore, $K_{X_i}+B_i$ is $\Rr$-Cartier. By Proposition \ref{prop: zhu24 4.8 generalized}, there exists a positive real number $\delta$ such that $(X_i\ni x_i,B_i^+)$ is klt and $B_i^+\geq (1+\delta)B_i'$ for some $B_i^+$. Possibly passing to a subsequence, $B_i^+\geq B_i$, so $(X_i\ni x_i,B_i)$ is klt near $x_i$, a contradiction.
\end{proof}

\begin{proof}[Proof of Theorem \ref{thm: upper bound mldk}]
Since $\Ii$ is a DCC set, $\Ii\backslash\{0\}$ has a positive lower bound. By Proposition \ref{prop: inversion of stability nv sequence}, there exist two positive real numbers $\delta,\epsilon_0$ and a positive integer $N$ depending only on $d,\Ii$ and $\epsilon$, and a $\Qq$-divisor $B^+$ on $X$, such that $NB^+$ is integral, $B^+\geq (1+\delta)B$, and $\nvol(X\ni x,B^+)\geq\epsilon_0$. By Proposition \ref{prop: inversion of stability nv sequence} again, there exist two positive real numbers $\delta',\epsilon_0'$, and a positive integer $N'$ depending only on $d,\Ii$ and $\epsilon$, and a $\Qq$-divisor $B'$ on $X$, such that $N'B'$ is integral, $B'\geq (1+\delta')B^+$, and $\nvol(X\ni x,B')\geq\epsilon_0'$. In particular, $(X\ni x,B')$ is lc, and there exists a positive real number $t$ depending only on $d,\Ii$ and $\epsilon$, such that $B'-B^+\geq t(B^+-B)$.

By \cite[Theorem 1.3]{XZ24}, there exists a positive real number $M$ depending only on $d,\Ii,\epsilon_0$, which in turn depends only on $d,\Ii$ and $\epsilon$, such that $\mld^K(X\ni x,B^+)\leq M$. Let $E$ be a Koll\'ar component of $(X\ni x,B^+)$ such that $a(E,X,B^+)\leq M$. Thus
\begin{align*}
   a(E,X,B)&=a(E,X,B^+)+\mult_E(B^+-B)\leq a(E,X,B^+)+\frac{1}{t}\mult_E(B'-B^+)\\
   &\leq M+\frac{1}{t}(a(E,X,B^+)-a(E,X,B'))\leq \left(1+\frac{1}{t}\right)M.
\end{align*}
Therefore, we may take $l:=\left(1+\frac{1}{t}\right)M$.
\end{proof}

\begin{proof}[Proof of Theorem \ref{thm: Un upper bound mld nv lower bound}]
By Corollary \ref{cor: nv lower bound imply mld}, there exists a positive real number $l'$ depending only on $d,\epsilon$, a prime divisor $E$ over $X$, $a(E,X,B)=\mld(X\ni x,B)\le l'$. By Theorem \ref{thm: zhu24 1.3 real}, there exists a positive real number $\delta$ depending only on $d,\epsilon$, such that $(X\ni x,(1+\delta)B)$ is lc. Therefore, 
$$\mult_{E} B=\frac{1}{\delta} \left(a(E,X,B)-a(E,X,(1+\delta)B)\right)\le \frac{l'}{\delta},$$
and
$$a(E,X,0)=a(E,X,B)+\mult_{E}B\le l'+\frac{l'}{\delta}.$$
We may let $l=l'+\frac{l'}{\delta}$ as required.
\end{proof}

The following lemma is a simple consequence of \cite[Theorem 1.8]{Bir21}.
\begin{lem}\label{lem: strictly lc complement}
Let $d$ be a positive integer, $\Ii_0\subset [0,1]\cap \Qq$ a finite set. Then exists a positive integer $N$ depending only on $d,\Ii_0$ satisfying the following. 

Let $(X,B)$ be a klt pair near a closed point $x\in X$, such that $\dim X=d$ and $B\in\Ii_0$. Then there exists a $\Qq$-divisor $B^{+}$ on $X$, such that
\begin{enumerate}
    \item $N(K_X+B^{+})$ is Cartier near $x$,
    \item $B^{+}>B$,
    \item $\mld(X\ni x, B^{+})=0$.
\end{enumerate}
In particular, $(X\ni x,B^{+})$ is lc.
\end{lem}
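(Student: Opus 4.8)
The plan is to reduce to the Kollár component of $(X\ni x,B)$ by a plt blow-up and then invoke the boundedness of complements in its relative local form \cite[Theorem 1.8]{Bir21}. After shrinking, we may assume $X$ is affine. Since $(X\ni x,B)$ is klt, choose a plt blow-up $f\colon Y\to X$ with exceptional Koll\'ar component $E$, and set $B_Y:=f^{-1}_*B$, so that $(Y,B_Y+E)$ is plt near $E=f^{-1}(x)$, $-E$ is ample$/X$, and $B_Y\in\Ii_0$. Writing $a:=a(E,X,B)>0$ we have $K_Y+B_Y+E=f^*(K_X+B)+aE$, hence $-(K_Y+B_Y+E)\equiv_X -aE$ is ample$/X$; moreover, for $0<\varepsilon\ll a$ the pair $(Y,B_Y+(1-\varepsilon)E)$ is klt (plt near $E$, and an isomorphic copy of $(X,B)$ away from $E$) with $-(K_Y+B_Y+(1-\varepsilon)E)$ ample$/X$, so $Y$ is of Fano type over $X$. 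In particular $\dim Y=d$ and the coefficients of $B_Y+E$ lie in the fixed finite set $\Ii_0\cup\{1\}\subset[0,1]\cap\Qq$.

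Now apply \cite[Theorem 1.8]{Bir21} to $(Y/X\ni x,B_Y+E)$: there exist a positive integer $N$ depending only on $d$ and $\Ii_0$ and a $\Qq$-divisor $G\ge 0$ on $Y$ such that $(Y,B_Y+E+G)$ is lc over a neighborhood $U$ of $x$, $N(K_Y+B_Y+E+G)\sim 0$ over $U$, and $B_Y+E+G\ge B_Y+E$. Since $f$ is a birational contraction, $N(K_Y+B_Y+E+G)\sim 0/U$ means $N(K_Y+B_Y+E+G)=f^*L$ for a Cartier divisor $L$ on $U$ (shrinking $U$); pushing forward, and setting $B^+:=f_*(B_Y+E+G)=B+f_*G\ (\ge B)$, we get $N(K_X+B^+)=L$ Cartier (indeed $\sim 0$) near $x$ and $K_Y+B_Y+E+G=f^*(K_X+B^+)$. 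This is (1), and the crepant relation together with lc-ness of $(Y,B_Y+E+G)$ over $U$ gives that $(X\ni x,B^+)$ is lc, the final assertion. Lc-ness of $(Y,B_Y+E+G)$ at the generic point of $E$ forces $\mult_E G=0$, so $a(E,X,B^+)=1-\mult_E(B_Y+E+G)=0$, whence $\mld(X\ni x,B^+)=0$, which is (3). Finally, if $B^+=B$ near $x$ then $\mld(X\ni x,B^+)=\mld(X\ni x,B)>0$ since $(X\ni x,B)$ is klt, contradicting $\mld(X\ni x,B^+)=0$; hence $B^+>B$, i.e. (2). (If one also wants $NB^+$ integral, enlarge $N$ by the denominators of the elements of $\Ii_0$.)

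The crux, and the step requiring care, is the application of the complement theorem in the relative local setting $(Y/X\ni x,B_Y+E)$: one must check that $Y$ is of Fano type over $X$ and that the coefficients of $B_Y+E$ come from a fixed finite set of rationals, so that $N$ is uniform in $(X\ni x,B)$ and not merely in $Y$, and one must use that the resulting complement is log canonical over an entire neighborhood of $x$ (not only near $E$) so that the descended pair $(X\ni x,B^+)$ is genuinely lc. The plt blow-up cannot be omitted: a complement of $(X\ni x,B)$ itself may well remain klt (e.g. on a smooth germ with $B=0$, where $K_X\sim 0$ locally), so passing to the Koll\'ar component $E$ is exactly what pins the complementary pair onto the log canonical boundary and yields $\mld(X\ni x,B^+)=0$.
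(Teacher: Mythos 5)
Your proposal is correct and follows essentially the same route as the paper: take a plt blow-up $f\colon Y\to X$ with Koll\'ar component $E$, invoke boundedness of relative local complements for $(Y/X\ni x, B_Y+E)$ to get a uniform $N$ and a complement $B_Y^+$, descend by $f_*$, and observe that lc-ness of the complement at the generic point of $E$ forces $a(E,X,B^+)=0$, whence $\mld=0$ and then $B^+>B$ by klt-ness of $(X\ni x,B)$. The only cosmetic difference is the citation: you appeal to \cite[Theorem 1.8]{Bir21} directly, whereas the paper cites \cite[Corollary 3.20]{HLS19}, which it itself describes as a variation of that result tailored to this exact relative local setting; the content of the argument is the same.
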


\begin{proof}
Let $f:Y\to X$ be a plt blow up of $(X\ni x,B)$, $E$ the corresponding Koll\'ar component, and $B_Y$ be strict transform of $B$ on $Y$. By \cite[Corollary 3.20]{HLS19} (a variation of \cite[Theorem 1.8]{Bir21}), there exist a positive integer $N$ depending only on $d,\Ii_0$, and a $\Qq$-divisor $B_{Y}^{+}$ on $Y$, such that  $B_{Y}^{+}\ge B_Y$, and $N(K_Y+B_{Y}^{+}+E)\sim 0$ and $(Y,B_{Y}^{+}+E)$ is lc over a neighborhood of $x$. Let $B^{+}=f_{*}B_Y^{+}$. Then $N(K_X+B^{+})$ is Cartier near $x$, and $\mld(X\ni x, B^{+})=0$ as $a(E,X,B^{+})=a(E,Y,B_{Y}^{+}+E)=0$. Since $\mld(X\ni x,B)>0$, we conclude that $B^{+}>B$.
\end{proof}

The following proof is inspired by a discussion with Chen Jiang.
\begin{proof}[Proof of Theorem \ref{thm: Birkar conje lct nv}] 
Possibly shrinking $X$ near $x$, we may assume that $X$ is affine. Let $f: X'\rightarrow X$ be a small $\Qq$-factorialization, $K_{X'}+B':=f^*(K_X+B)$. Since $X'$ is of Fano type over $X$, we may run a $-K_{X'}$-MMP$/X$ which terminates with a good minimal model $X''$. Possibly replacing $X'$ with $X''$, we may assume that $-K_{X'}$ is nef$/X$. By Lemma \ref{lem: zhu24 2.10 real}, for any closed point $x'\in f^{-1}(x)$,
    $$\nvol(X'\ni x')\ge \nvol(X'\ni x',B')\geq\nvol(X\ni x,B)\geq\epsilon.$$
  
  By Lemma \ref{lem: xz21 1.4 nonclosed point}, there exists a positive integer $I$ depending only on $d$ and $\epsilon$ such that $IK_{X'}$ is Cartier. Since $-K_{X'}$ is big$/X$ and nef$/X$, and $X'$ is klt over an open neighborhood of $x$, by the relative version of Koll\'ar's effective base-point-free theorem \cite[Theorem 1.3]{Fuj09}, there exists an integer $N\geq 3$ depending only on $d$ and $\epsilon$ such that $-NK_{X'}$ is base-point-free$/X$. Let $\tilde{D'}$ be a general element of $|-NK_{X'}|$, $\tilde{D}:=f_{*}\tilde{D'}$, and $D_1=\frac{1}{N}\tilde{D}$, then $(X,D_1)$ is klt near $x$, and $ND_1$ is integral, and $N(K_X+D_1)$ is Cartier near $x$. By Lemma \ref{lem: strictly lc complement}, possibly replacing $N$ with a multiple, there exists a $\Qq$-Cartier $\Qq$-divisor $D_2$, such that $(X\ni x,D_1+D_2)$ is lc, $\mld(X\ni x,D_1+D_2)=0$, and $N(K_X+D_1+D_2)$ is Cartier near $x$. In particular, $D\coloneqq ND_2>0$ is Cartier near $x$. 


By \cite[Theorem 1.6]{LX19}, there exists a Koll\'ar component $E$ of $X'\ni x'$ for some closed point $x'\in f^{-1}(x)$, such that $\nvol(E,X')\le d^d+1$. 
  By \cite[Lemma 3.4]{Zhu24} and \cite[Lemma 2.9]{LX19}(1), there exists a positive real number $c_0$ depending only on $d$, such that
\begin{align*}
    N\cdot \lct(X\ni x, B;D)=&\lct(X\ni x, B;D_2)\\
    \ge &c_0\cdot\frac{\epsilon}{\nvol(E,X,B)} \cdot \frac{a(E,X,B)}{\mult_{E}(D_2)}\\
    =&\frac{c_0\epsilon}{a^{d-1}(E,X,B)\vol_X(\ord_E)\mult_{E}(D_2)}\\
    \ge & \frac{c_0\epsilon}{a^{d-1}(E,X,B)\vol_X(\ord_E) a(E,X,D_1)}\\
    \ge &\frac{c_0\epsilon}{a^d(E,X')\vol_{X'}(\ord_E)}\\
    =&\frac{c_0\epsilon}{\nvol(E,X')}\ge \frac{c_0\epsilon}{d^d+1},
\end{align*}
  where the second inequality follows from that $a(E,X,D_1+D_2)\ge 0$ as $(X\ni x,D_1+D_2)$ is lc. We may let $t=\frac{c_0\epsilon}{N(d^d+1)}$ as required.
\end{proof}

\section{Further discussions}\label{sec: further discusssion}

In this section, we provide a package to prove the ACC property for local invariants. The content of this section is inspired by discussions with Z. Zhuang.


\begin{defn}
    Let $d$ be a positive integer, and $\mathfrak{f}$ a non-negative function defined on lc germs of dimension $d$. We say that $\mathfrak{f}$ is \emph{discrete} if for any finite set $\Ii_0\subset [0,1]$, 
        $$\Ll_{d,\Ii_0}:=\left\{\mathfrak{f}(X\ni x,B) \Bigm| \dim X=d,B\in\Ii_0,x\text{ is a closed point}\right\}$$
        is a discrete set away from $0$, i.e. $\Ll_{d,\Ii_0}\cap [\delta,+\infty)$ is discrete for any $\delta>0$.

        We say that $\mathfrak{f}$ is \emph{bounded} if there exists a positive real number $l$, such that $\mathfrak{f}(X\ni x,B)\le l$ for any lc germ $(X\ni x,B)$ of dimension $d$.  

        We say that $\mathfrak{f}$ is \emph{strictly decreasing} if for any lc germs $(X\ni x,B)$, $(X\ni x,B')$  of dimension $d$ with $B'>B$ near $x$, we have $\mathfrak{f}(X\ni x,B)>\mathfrak{f}(X\ni x,B')$.

        We say that $\mathfrak{f}$ is \emph{approximate} if there exists a function $C_d(t)$ defined on $\Rr$ such that $\lim_{t\to +\infty}C_d(t)=1$ and satisfies the following:  
        $$\mathfrak{f}(X\ni x,B+D)\ge C_d(t)\mathfrak{f}(X\ni x,B)$$
        for any lc germ $(X\ni x,B)$ and $\Rr$-Cartier $\Rr$-divisor $D>0$ with $\lct(X\ni x, B;D)\ge t$.  

        We say that $\mathfrak{f}$ is \emph{strictly $N$-complemented} if for any positive real number $\epsilon$, there exists a positive integer $N$ and a positive real number $\delta$ depending only on $d$ and $\epsilon$, such that for any lc germ $(X\ni x,B)$ of dimension $d$ such that the coefficients of $B$ are $\geq\epsilon$ and $\mathfrak{f}(X\ni x,B)\geq\epsilon$, then exists a $\Qq$-divisor $B^+$, such that $(X\ni x,B^{+})$ is an $N$-complement of $(X\ni x,B)$ with $B^+\geq (1+\delta)B$.

        We say that $\mathfrak{f}$ satisfies \emph{the inversion of stability of $\Rr$-Cartierness} if for any DCC set $\Ii\subset[0,1]$ with $\Ii=\overline{\Ii}$, and a sequence of lc germs $(X_i\ni x_i,B_i)$ of dimension $d$ with $\mathfrak{f}(X_i\ni x_i,B_i)\ge \epsilon$ for some positive real number $\epsilon$, 
        possibly passing to a sequence, there exist a positive real number $\epsilon_0$, a finite set $\Ii_0\subset\Ii$ and a projection $p:\Ii\rightarrow\Ii_0$, such that
\begin{itemize}
    \item $p(\gamma)\geq\gamma$ for any $\gamma\in\Ii$,
    \item $p(\gamma')\geq p(\gamma)$ for any $\gamma',\gamma\in\Ii$ such that $\gamma'\geq\gamma$,
    \item $K_{X_i}+p(B_i)$ is $\Rr$-Cartier,
    \item $\lim_{i\rightarrow+\infty}||p(B_i)-B_i||=0$, and
    \item $\mathfrak{f}(X_i\ni x_i,p(B_i))\ge \epsilon_0$.
\end{itemize}

\end{defn}

\begin{rem}
The definition of the inversion of stability of $\Rr$-Cartierness seems a bit complicated and artificial. If the number of irreducible components of $B_i$ are bounded from above by $m$ for any $i$, say $B_i=\sum_{j=1}^m b_{i,j}B_{i,j}$, then we only need to require that $K_{X_i}+\overline{B_i}$ is $\Rr$-Cartier and $\mathfrak{f}(X_i\ni x_i,\overline{B_i})\ge \epsilon_0$, where $b_j=\lim_{i\to +\infty}b_{i,j}$, and $\overline{B_i}=\sum_{j=1}^m b_{j}B_{i,j}$. However, if $X_i$ is not $\Qq$-factorial, then the number of irreducible components of $B_i$ could be unbounded, so we introduce the projection $p$ to overcome this difficulty. 
Indeed, if the ACC for $\mathfrak{f}$ (i.e. ACC for $\mathfrak{f}(X\ni x,B)$ when $\dim X=d$ and $B\in\Ii$) does not hold, then possibly passing to a subsequence, we have that $\mathfrak{f}(X_i\ni x_i,\overline{B_i})\ge \epsilon$ for some fixed $\epsilon>0$. Since we usually prove the ACC properties by contradiction, the condition $\mathfrak{f}(X_i\ni x_i,p(B_i))\ge \epsilon_0$ is natural in this sense.

By the proof of Proposition \ref{prop: inversion of stability nv sequence}, the inversion of stability for $\Rr$-Cartierness holds if (1) $\mathfrak{f}(Y\ni y,B_Y)\ge \mathfrak{f}(X\ni x,B)$, (2) the Cartier index of any $\Qq$-Cartier Weil divisor is bounded from above if $\mathfrak{f}(X\ni x,B)\ge \epsilon$, and (3) either 1) the ACC conjecture for mlds holds or 2) both the ACC conjecture for mlds and the ACC conjecture for $a$-lcts hold when $\mathfrak{f}(X\ni x,B)\ge \epsilon$.
\end{rem}





\begin{thm}\label{thm: ACC criterion}
       Let $d$ be a positive integer, and $\mathfrak{f}$ a non-negative function defined on lc germs of dimension $d$. If $\mathfrak{f}$ is discrete, bounded, strictly decreasing, approximate, strictly $N$-complemented, and satisfies the inversion of stability of $\Rr$-Cartierness, then for any DCC set $\Ii\subset [0,1]$,
         $$\Ll_{d,\Ii}:=\left\{\mathfrak{f}(X\ni x,B) \Bigm| \dim X=d,B\in\Ii,x\text{ is a closed point}\right\}$$
       satisfies the ACC.
\end{thm}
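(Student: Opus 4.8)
The plan is to mimic the proof of Theorem \ref{thm: acc nv with mld upper bound} (the special case $\mathfrak{f}=\nvol$), which is exactly the pattern abstracted by the six hypotheses. First I would argue by contradiction: suppose $\Ll_{d,\Ii}$ does not satisfy the ACC, so after replacing $\Ii$ by $\bar\Ii$ we obtain a sequence of lc germs $(X_i\ni x_i,B_i)$ of dimension $d$ with $B_i\in\Ii$ and $v_i:=\mathfrak{f}(X_i\ni x_i,B_i)$ strictly increasing; in particular $v_i\ge\epsilon$ for some $\epsilon>0$, and since $\mathfrak{f}$ is \emph{bounded}, say by $l$, the whole sequence lies in $[\epsilon,l]$.

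Next I would split off the part of $B_i$ with tiny coefficients: write $B_i=B_i^{\mathrm{sm}}+B_i^{\mathrm{big}}$, where $B_i^{\mathrm{big}}$ collects the components with coefficient $\ge\epsilon$. The idea is that $B_i^{\mathrm{sm}}$ is uniformly small and can be absorbed by \emph{approximateness}, while $B_i^{\mathrm{big}}$ has coefficients bounded below so that \emph{strict $N$-complementedness} applies to it. Actually, the cleaner route—following the $\nvol$ case verbatim—is: since $\mathfrak{f}$ is strictly $N$-complemented there are $N,\delta$ (depending only on $d,\epsilon$) and $\Qq$-divisors $B_i^+$ with $B_i^+\ge(1+\delta)B_i$ and $(X_i\ni x_i,B_i^+)$ an $N$-complement; in particular $(X_i\ni x_i,B_i^+)$ is lc. (Here one needs the coefficients of $B_i$ bounded below; if $\Ii$ contains values accumulating to $0$ one first handles those components separately, but as in the $\nvol$ proof we may assume $\min\Ii_{>0}=\gamma_0>0$ after discarding, or treat small-coefficient components via approximateness—this is a routine bookkeeping step.) Now apply the \emph{inversion of stability of $\Rr$-Cartierness}: after passing to a subsequence we get a finite set $\Ii_0\subset\Ii$ and a projection $p:\Ii\to\Ii_0$ with $p(\gamma)\ge\gamma$, monotone, $K_{X_i}+p(B_i)$ $\Rr$-Cartier, $\|p(B_i)-B_i\|\to0$, and $\mathfrak{f}(X_i\ni x_i,p(B_i))\ge\epsilon_0$ for some fixed $\epsilon_0>0$.

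Then set $D_i:=p(B_i)-B_i\ge0$, which is $\Rr$-Cartier near $x_i$ (difference of two $\Rr$-Cartier divisors), and let $t_i:=\lct(X_i\ni x_i,B_i;D_i)$. Because $(X_i\ni x_i,B_i^+)$ is lc with $B_i^+\ge(1+\delta)B_i$ and $\|D_i\|\to0$ while all coefficients of $B_i$ lie in the fixed DCC set $\Ii$ (so are bounded below on their support by $\gamma_0$), one gets $t_i\to+\infty$: concretely, once $\|D_i\|<\delta\gamma_0$ we have $B_i+t_iD_i\le B_i^+$ for $t_i$ of order $\delta\gamma_0/\|D_i\|$, forcing $t_i\to\infty$. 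Apply \emph{approximateness} with the divisor $D_i$: $\mathfrak{f}(X_i\ni x_i,p(B_i))=\mathfrak{f}(X_i\ni x_i,B_i+D_i)\ge C_d(t_i)\,\mathfrak{f}(X_i\ni x_i,B_i)=C_d(t_i)v_i$. Since $\mathfrak{f}$ is \emph{strictly decreasing} and $p(B_i)\ge B_i$ we also have $\mathfrak{f}(X_i\ni x_i,p(B_i))\le v_i$ (with equality only if $p(B_i)=B_i$), so $C_d(t_i)v_i\le\mathfrak{f}(X_i\ni x_i,p(B_i))\le v_i$; as $C_d(t_i)\to1$ and $v_i\in[\epsilon,l]$ is strictly increasing, the sequence $\mathfrak{f}(X_i\ni x_i,p(B_i))$ is, after a further subsequence, also strictly increasing. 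But $p(B_i)\in\Ii_0$ with $\Ii_0$ finite, $K_{X_i}+p(B_i)$ $\Rr$-Cartier, $\dim X_i=d$, and $\mathfrak{f}(X_i\ni x_i,p(B_i))\ge\epsilon_0$; I claim this contradicts \emph{discreteness} of $\mathfrak{f}$. Indeed discreteness says $\Ll_{d,\Ii_0}\cap[\epsilon_0,+\infty)$ is a discrete set, which cannot contain a strictly monotone bounded sequence—contradiction.

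The main obstacle I anticipate is the very first reduction: \emph{strict $N$-complementedness} and \emph{inversion of stability} as stated only feed in when the relevant coefficients are bounded away from $0$, whereas a general DCC set $\Ii$ may have $0$ as an accumulation point, so $B_i$ can have components of arbitrarily small coefficient. Matching the two regimes—decomposing $B_i$ so that the "large" part drives the complement/inversion-of-stability machinery while the "small" part is controlled by approximateness (note $\lct$ of a tiny divisor against an lc pair is large)—and checking that the finiteness of $\Ii_0$ survives this decomposition is the delicate point; everything else is a direct transcription of the $\mathfrak{f}=\nvol$ argument in Theorem \ref{thm: acc nv with mld upper bound}, where the six hypotheses were precisely reverse-engineered from the steps used there.
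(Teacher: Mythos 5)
Your proposal is correct and follows essentially the same route as the paper's proof: reduce to $\bar\Ii$, use boundedness to pin $v_i$ in $[\epsilon,l]$, apply inversion of stability to get the projection $p$ and strict $N$-complementedness to get $B_i^+$, deduce $t_i:=\lct(X_i\ni x_i,B_i;D_i)\to+\infty$, and close with approximateness, monotonicity and discreteness. Two small remarks. First, the worry you flag at the end---that a DCC set $\Ii$ might have coefficients accumulating at $0$, requiring a delicate decomposition of $B_i$---is unfounded: a DCC subset of $[0,1]$ automatically has $\gamma_0:=\min\Ii_{>0}>0$ whenever $\Ii\ne\{0\}$ (otherwise a strictly decreasing sequence in $\Ii_{>0}$ would violate the DCC), so one simply takes $\epsilon\le\gamma_0$ and all nonzero coefficients of $B_i$ are bounded below by $\gamma_0\ge\epsilon$, exactly as the paper does. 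Second, your endgame differs slightly but harmlessly from the paper's: you squeeze $C_d(t_i)v_i\le\mathfrak{f}(X_i\ni x_i,p(B_i))\le v_i$ and extract a strictly increasing bounded sequence inside $\Ll_{d,\Ii_0}\cap[\epsilon_0,+\infty)$, which contradicts discreteness (understood, as the paper uses it, to mean no accumulation point, so bounded implies finite), whereas the paper first uses discreteness and boundedness to pass to a subsequence on which $\mathfrak{f}(X_i\ni x_i,p(B_i))=v$ is constant, observes $v<v_i$ by strict monotonicity, and then contradicts $v\ge\lim_i C_d(t_i)v_i=\lim_i v_i>v$. Both variants rest on the same six hypotheses and the same $D_i$, $t_i$; yours just packages the final contradiction differently.
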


\begin{proof}
Possibly replacing $\Ii$ with $\bar\Ii$, we may assume that $\Ii=\bar\Ii$. Since $\mathfrak{f}$ is discrete, we may assume that $\Ii\ne \{0\}$. Let $\gamma_0:=\min\Ii_{>0}$.

Suppose that the theorem does not hold. Then there exists a sequence of lc germs $(X_i\ni x_i,B_i)$ of dimension $d$ such that $B_i\in\Ii$, and $v_i:=\mathfrak{f}(X_i\ni x_i,B_i)$ is strictly increasing. In particular, we may assume that $f_i\geq\epsilon$ for some positive real number $\epsilon\in (0,\gamma_0]$. 

Let $B_i=\sum_{j=1}^{m_i}b_{i,j}B_{i,j}$ where $B_{i,j}$ are the irreducible components of $B_i$ and $b_{i,j}\in\Ii$. Possibly replacing $B_i$ we may assume that $x_i\in B_{i,j}$ for any $i,j$. Since $\mathfrak{f}$ satisfies the inversion of stability of $\Rr$-Cartierness, possibly passing to a sequence, there exist a finite set $\Ii_0\subset\Ii$ and a projection $p:\Ii\rightarrow\Ii_0$, such that
\begin{itemize}
    \item $p(\gamma)\geq\gamma$ for any $\gamma\in\Ii$,
    \item $p(\gamma')\geq p(\gamma)$ for any $\gamma',\gamma\in\Ii$ such that $\gamma'\geq\gamma$,
    \item $K_{X_i}+p(B_i)$ is $\Rr$-Cartier, and
    \item $\lim_{i\rightarrow+\infty}||p(B_i)-B_i||=0$.
\end{itemize}

Since $\mathfrak{f}$ is strictly $N$-complemented, there exist a positive integer $N$ and a positive real number $\delta$ depending only on $d,\epsilon$, and $\Qq$-divisors $B_i^+$, such that $(X_i,B_i^{+})$ is an $N$-complement of $(X_i,B_i)$ with $B_i^{+}\geq (1+\delta)B_i$.

Let $D_i:=p(B_i)-B_i$ and $t_i:=\lct(X_i\ni x_i,B_i;D_i)$. Since $(X_i\ni x_i,B_i^+)$ is lc,  we have $t_i\to +\infty$.


Since $\mathfrak{f}$ is discrete and bounded, possibly passing to a subsequence, we may assume that $\mathfrak{f}(X_i\ni x_i, p(B_i))=v$ is a constant. Since $\mathfrak{f}$ is strictly decreasing, $v<v_i$ for any $i$.

Since $\mathfrak{f}$ is approximate,  
$$v=\mathfrak{f}(X_i\ni x_i,p(B_i))=\mathfrak{f}(X_i\ni x_i,B_i+D_i)\ge C_d(t_i)\mathfrak{f}(X_i\ni x_i,B_i)=C_d(t_i)v_i,$$ and 
$$v\ge \lim_{i\to +\infty}C_d(t_i)v_i=\lim_{i\to +\infty}v_i>v,$$
a contradiction. 
\end{proof}



In the following, we will give an alternative proof of Theorem \ref{thm: ACC local volume} by applying Theorem \ref{thm: ACC criterion}.

Let's recall the definition of $\delta$-plt blow-ups.
\begin{defn}\label{defn: reduced component}
	Let $\delta$ be a positive real number, and $(X,B)$ a klt pair near a closed point $x$. 
 
	A \emph{$\delta$-plt blow-up} of $(X\ni x,B)$ of $(X\ni x,B)$) is a blow-up $f: Y\rightarrow X$ with the exceptional divisor $E$ over $X\ni x$, such that $(Y,f^{-1}_*B+E)$ is $\delta$-plt near $E$, and $-E$ is ample over $X$. 

A $\Qq$-factorial weak \emph{$\delta$-plt blow-up} of $(X\ni x,B)$) is a blow-up $f: Y\rightarrow X$ with the exceptional divisor $E$, such that $(Y,f^{-1}_*B+E)$ is $\Qq$-factorial $\delta$-plt near $E$, $-E$ is nef over $X$, $-(K_Y+B_Y+E)|_{E}$ is big, and $f^{-1}(x)=\Supp E$. 
\end{defn}

Based on \cite{Zhu24}, \cite{XZ24}, we confirm \cite[Conjecture 1.6]{HLQ23}.
 \begin{thm}\label{thm:existence of delta-plt blow-up}
	Let $d\ge 2$ be a positive integer and $\epsilon$ a positive real number. Then there exists a positive real number $\delta$ depending only on $d,\epsilon$ satisfying the following. 
		If $(X\ni x,B)$ is a klt germ of dimension $d$ such that the coefficients of $B$ are $\ge \epsilon$, and $\nvol(X\ni x,B)\ge \epsilon$, then $(X\ni x,B)$ admits a $\delta$-plt blow-up.
	\end{thm}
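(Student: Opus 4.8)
The plan is to reduce to finite coefficients via Proposition \ref{prop: zhu24 4.8 generalized}, extract an arbitrary Koll\'ar component, and then use the boundedness of local Cartier indices (Lemma \ref{lem: xz21 1.4 nonclosed point}, i.e. \cite{XZ21}) to force every log discrepancy of the resulting blow-up near the exceptional divisor into a fixed discrete set; plt-ness will then automatically improve to $\delta$-plt-ness for a uniform $\delta$. \emph{Step 1 (finite coefficients).} I would first apply Proposition \ref{prop: zhu24 4.8 generalized} to $(X\ni x,B)$ to get $\epsilon_0,\delta_0>0$ and $N_0\in\Zz_{>0}$ depending only on $d,\epsilon$, together with a $\Qq$-divisor $B^+$ with $N_0B^+$ integral, $B^+\ge(1+\delta_0)B$, and $\nvol(X\ni x,B^+)\ge\epsilon_0>0$. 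Since $\nvol(X\ni x,B^+)>0$, the pair $(X\ni x,B^+)$ is klt near $x$ and $K_X+B^+$ is $\Qq$-Cartier near $x$, and the coefficients of $B^+$ lie in the finite set $\Ii_0:=\tfrac1{N_0}\Zz\cap[0,1)$. I note it suffices to produce a $\delta$-plt blow-up of $(X\ni x,B^+)$: if $f\colon Y\to X$ extracts $E$ with $-E$ ample$/X$ and $(Y,f^{-1}_*B^++E)$ $\delta$-plt near $E$, then, since $f^{-1}_*B\le f^{-1}_*B^+$, one has $a(F,Y,f^{-1}_*B+E)\ge a(F,Y,f^{-1}_*B^++E)>\delta$ for every prime divisor $F$ exceptional over $Y$ whose center meets $E$, so $f$ is also a $\delta$-plt blow-up of $(X\ni x,B)$.

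\emph{Step 2 (a Koll\'ar component with controlled indices).} Since $(X\ni x,B^+)$ is klt, it admits a plt blow-up $f\colon Y\to X$ with exceptional divisor $E$ over $X\ni x$ (cf. \cite[Lemma 1]{Xu14}), so $(Y,B_Y^++E)$ is plt near $E$ and $-E$ is ample$/X$, where $B_Y^+:=f^{-1}_*B^+$. Writing $f^*(K_X+B^+)=K_Y+B_Y^++(1-a)E$ with $a:=a(E,X,B^+)>0$ gives $K_Y+B_Y^+\le f^*(K_X+B^+)$, so by Lemma \ref{lem: zhu24 2.10 real}, $\nvol(Y\ni y,B_Y^+)\ge\nvol(X\ni x,B^+)\ge\epsilon_0$ for every closed point $y\in E\subseteq f^{-1}(x)$. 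Hence Lemma \ref{lem: xz21 1.4 nonclosed point}, applied on $X$ to the $\Qq$-Cartier Weil divisor $N_0(K_X+B^+)$ and on $Y$ with $W=E$, yields $I=I(d,\epsilon)\in\Zz_{>0}$ such that $I(K_X+B^+)$ is Cartier near $x$, $ID$ is Cartier near every point of $E$ for every $\Qq$-Cartier Weil divisor $D$ on $Y$, and $a\in\tfrac1I\Zz$. Setting $m_0:=\operatorname{lcm}(I,N_0)\cdot I$, both $K_Y+B_Y^++E=f^*(K_X+B^+)+aE$ and $K_Y+B_Y^+=f^*(K_X+B^+)-(1-a)E$ are then $\Qq$-Cartier near every point of $E$ with index dividing $m_0$.

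\emph{Step 3 (plt $\Rightarrow$ $\delta$-plt).} Let $F$ be a prime divisor exceptional over $Y$ (near $E$) whose center meets $E$; note $F\neq E$. If $\Center_YF\subseteq E$, then, since $m_0(K_Y+B_Y^++E)$ is Cartier near $E$ and the coefficients of $B_Y^++E$ lie in $\tfrac1{m_0}\Zz$, the standard discreteness of log discrepancies gives $a(F,Y,B_Y^++E)\in\tfrac1{m_0}\Zz$, and plt-ness forces it to be $>0$, hence $\ge\tfrac1{m_0}$. If $\Center_YF\not\subseteq E$, then $\mathrm{mult}_FE=0$, so $a(F,Y,B_Y^++E)=a(F,Y,B_Y^+)$, which by the same argument (using $m_0(K_Y+B_Y^+)$ Cartier near $E$, and $(Y,B_Y^+)$ klt near $E$) lies in $\tfrac1{m_0}\Zz\cap(0,\infty)$ and so is $\ge\tfrac1{m_0}$. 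Thus $(Y,B_Y^++E)$ is $\tfrac1{m_0}$-plt near $E$, and by Step 1, $f\colon Y\to X$ is a $\delta$-plt blow-up of $(X\ni x,B)$ with $\delta:=\tfrac1{m_0}$, depending only on $d$ and $\epsilon$.

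The real content sits in Steps 1 and 2: Proposition \ref{prop: zhu24 4.8 generalized} — which rests on Theorem \ref{thm: upper bound mldk}, hence ultimately on the ACC for mlds (Theorem \ref{thm: acc mld nv lower bound no l}) proved earlier — is what lets us replace $B$ by a divisor with finite coefficients without destroying the lower bound on the local volume, and Lemma \ref{lem: xz21 1.4 nonclosed point} (boundedness of local Cartier indices, \cite{XZ21}) is what makes the relevant log discrepancies discrete. Granting these, the only point needing genuine care is the index bookkeeping of Step 2 — checking that $f^*(K_X+B^+)$, $E$, and therefore $K_Y+B_Y^++E$ and $K_Y+B_Y^+$ are all $\Qq$-Cartier near \emph{every} point of $E$ with one uniformly bounded index — together with the routine check that Lemmas \ref{lem: zhu24 2.10 real} and \ref{lem: xz21 1.4 nonclosed point} deliver the conclusions at non-closed points of $E$ as well. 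With that in place, the upgrade from ``plt'' to ``$\delta$-plt'' in Step 3 is purely formal.
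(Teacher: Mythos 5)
Your Step 1 — reducing to finite rational coefficients via Proposition \ref{prop: zhu24 4.8 generalized} and observing that a $\delta$-plt blow-up of $(X\ni x,B^+)$ is also one of $(X\ni x,B)$ — is exactly the paper's first move. The divergence, and the gap, is in Step 2.

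You write $f^*(K_X+B^+)=K_Y+B_Y^++(1-a)E$ and conclude $K_Y+B_Y^+\le f^*(K_X+B^+)$, which is what Lemma \ref{lem: zhu24 2.10 real} requires. But that inequality is equivalent to $(1-a)E\ge 0$, i.e.\ to $a=a(E,X,B^+)\le 1$, and this simply fails for a general plt blow-up. For instance, if $X$ is smooth (so $\nvol(X\ni x,B^+)$ is certainly $\ge\epsilon_0$) and $B^+$ is small or zero, every Koll\'ar component over $x$ has $a(E,X,B^+)\ge d\ge 2$; there is no choice of plt blow-up that brings $a$ down to $\le 1$. With $a>1$ the inequality reverses, Lemma \ref{lem: zhu24 2.10 real} gives nothing, and you have not established $\nvol(Y\ni y,B_Y^+)\ge\epsilon_0$ for closed $y\in E$. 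Consequently Lemma \ref{lem: xz21 1.4 nonclosed point} cannot be invoked on $Y$ with $W=E$, so there is no uniform bound on the Cartier index of $E$ (or of $K_Y+B_Y^++E$) near $E$, and the discreteness argument in Step 3 has no input. Step 3 itself is fine as a formal mechanism (with the cosmetic adjustment $\delta<\tfrac1{m_0}$ rather than $\delta=\tfrac1{m_0}$, since $\delta$-plt demands a strict inequality), but it is conditional on the Cartier-index bound that Step 2 fails to deliver.

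The paper does not attempt this shortcut: after the reduction to finite coefficients it simply invokes \cite[Theorem 1.5]{Zhu24} together with \cite[Theorem 1.3]{XZ24} to produce the $\delta$-plt blow-up directly. The content of those results — boundedness of $\mld^K$ and, crucially, log boundedness of the Koll\'ar component $(E,B_E)$ via BAB — is precisely the boundedness on $Y$ near $E$ that you would need, and it does not follow from a crepant comparison of $\nvol$ because the comparison goes the wrong way when $a>1$. Repairing your Step 2 would amount to reproving \cite[Theorem 1.5]{Zhu24}, which is far from the ``routine bookkeeping'' you describe; as written, the proposal has a genuine gap.
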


 \begin{proof}
By Proposition \ref{prop: zhu24 4.8 generalized}, there exist a positive real number $\epsilon_0$, and a finite set $\Ii_0\subset \Qq$ depending only on $d,\epsilon$, and a $\Qq$-divisor $B^+$, such that $B^+\in\Ii_0$, $B^+\ge B$, and $\nvol(X\ni x,B^+)\geq\epsilon_0$. By \cite[Theorem 1.5]{Zhu24} and \cite[Theorem 1.3]{XZ24}, $(X\ni x,B^+)$ admits a $\delta$-plt blow-up, which is also a $\delta$-plt blow-up of $(X\ni x,B)$ as $B^+\ge B$.
 \end{proof}


The following lemma is a generalization of the Hodge index theorem. 
\begin{lem}\label{lem: D^2H^n-2 DH^n-1} Let $X$ be a normal projective variety of dimension $d$ and $D$ an $\Rr$-Cartier $\Rr$-divisor on $X$ such that $D\cdot H^{d-1}= 0$ and $D^2\cdot H^{d-2}= 0$ for some ample $\mathbb{R}$-Cartier $\Rr$-divisor $H$, then $D\equiv 0$.
\end{lem}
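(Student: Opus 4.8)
The plan is to derive the statement from the general form of the Hodge index theorem. Recall that for an irreducible projective variety $X$ of dimension $d\ge 2$ with an ample $\Rr$-Cartier $\Rr$-divisor $H$, the quadratic form $F\mapsto F^2\cdot H^{d-2}$ on the hyperplane $\{F\mid F\cdot H^{d-1}=0\}\subset N^1(X)_{\Rr}$ is negative semidefinite, and the conditions $F\cdot H^{d-1}=0$ and $F^2\cdot H^{d-2}=0$ together force $F\equiv 0$ (see e.g.\ \cite{Laz04}). Applying this with $F=D$ and the two hypotheses of the lemma immediately yields $D\equiv 0$. To keep the argument self-contained, one can instead reduce everything to the Hodge index theorem on a surface, as follows.

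First I would fix a sufficiently divisible positive integer $m$ so that $mH$ is very ample, and choose general members $H_1,\dots,H_{d-2}\in|mH|$. By Bertini together with Seidenberg's theorem that a general hyperplane section of a normal variety of dimension $\ge 2$ is again normal (applied $d-2$ times), $S:=H_1\cap\dots\cap H_{d-2}$ is a normal projective surface, and we may further assume $S\not\subseteq\Supp D$ so that $D_S:=D|_S$ is a well-defined $\Rr$-Cartier $\Rr$-divisor on $S$. Writing $H_S:=H|_S$, the projection formula gives $D_S\cdot H_S=m^{d-2}(D\cdot H^{d-1})=0$ and $D_S^2=m^{d-2}(D^2\cdot H^{d-2})=0$; by the classical Hodge index theorem on the surface $S$ (passing to a resolution of $S$ and using the nef-and-big version if one prefers to avoid normality of $S$), this forces $D_S\equiv 0$. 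The same cutting-down argument also shows that on $X$ itself one has $F^2\cdot H^{d-2}\le 0$ whenever $F\cdot H^{d-1}=0$; feeding in $F=sD+E_0$, where $E_0:=E-\tfrac{E\cdot H^{d-1}}{H^d}H$ for an arbitrary $\Rr$-Cartier $\Rr$-divisor $E$, and using $D^2\cdot H^{d-2}=0$, the inequality $2s\,(D\cdot E_0\cdot H^{d-2})+E_0^2\cdot H^{d-2}\le 0$ for all $s\in\Rr$ forces $D\cdot E_0\cdot H^{d-2}=0$, hence $D\cdot E\cdot H^{d-2}=0$ for every $E$. Thus the $1$-cycle class $D\cdot H^{d-2}$ pairs to zero against all divisor classes, so $D\cdot H^{d-2}\equiv 0$ in $N_1(X)_{\Rr}$.

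The remaining step is to upgrade $D\cdot H^{d-2}\equiv 0$ to $D\equiv 0$. This is precisely the non-degeneracy half of the Hodge index theorem: the map $\cdot H^{d-2}\colon N^1(X)_{\Rr}\to N_1(X)_{\Rr}$ is an isomorphism, which one can see via the hard Lefschetz isomorphism after passing to a resolution, or prove directly by induction on $d$ — given an irreducible curve $C\subset X$, one produces a projective surface $S'\supseteq C$ cut out by general members of $|mH\otimes\mathcal{I}_C|$ for $m\gg0$; since $[S']$ is a multiple of $H^{d-2}$ one again gets $D|_{S'}\cdot H|_{S'}=0$ and $(D|_{S'})^2=0$, and the surface Hodge index theorem on a resolution of $S'$ gives $D\cdot C=0$. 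The main obstacle is exactly this last point — arranging a sufficiently well-behaved surface through a prescribed curve so that the surface-level Hodge index theorem applies — and for that reason it is cleanest simply to invoke the general Hodge index theorem, from which $D\cdot H^{d-1}=0$ and $D^2\cdot H^{d-2}=0$ give $D\equiv 0$ at once.
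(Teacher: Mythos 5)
Your closing argument — given a curve $C$, cut out a surface $S'\supseteq C$ by general members of $|mH\otimes\mathcal I_C|$ for $m\gg 0$, pass to a resolution of $S'$, and apply the surface Hodge index theorem to get $D\cdot C=0$ — is essentially the paper's proof, just compressed: the paper cuts one hyperplane at a time and inducts on $\dim X$, with a base case that handles the resolution exactly as you need to (replace the pulled-back $H$, which is only nef and big, by the ample perturbation $f^*H-\epsilon E$ with $E$ exceptional, using that $f^*D\cdot E=0$). The two preceding paragraphs, however, do not on their own close the argument. Citing the ``general Hodge index theorem'' is reasonable in spirit, but the statement you quote (negative definiteness of $F\mapsto F^2\cdot H^{d-2}$ on the $H$-primitive hyperplane) is for \emph{smooth} projective $X$; for normal $X$ one pulls back to a resolution $\pi:Y\to X$, where $\pi^*H$ is merely nef and big, and there the form can have a genuine kernel in dimension $\ge 3$ (e.g.\ the exceptional divisor of a point blow-up of a threefold satisfies $E\cdot(\pi^*H)^2=E^2\cdot\pi^*H=0$ but $E\not\equiv 0$), so one cannot conclude $\pi^*D\equiv 0$ without the surface-cutting reduction anyway. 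Likewise, the middle paragraph's conclusion that $D\cdot H^{d-2}\equiv 0$ in $N_1(X)$ is strictly weaker than $D\equiv 0$: it only gives $D\cdot E\cdot H^{d-2}=0$ for divisor classes $E$, and the curve classes you need to pair against are not a priori of this form; upgrading it requires injectivity of $\cdot\,H^{d-2}\colon N^1\to N_1$, which is no more elementary than the lemma. So the correct and self-contained path through your writeup is the final paragraph, and that coincides with what the paper does.
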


\begin{proof}
We proceed by dimension induction. When $\dim X=2$, let $f:Y\to X$ be the minimal resolution, and $E\geq 0$ an anti-ample$/X$ exceptional$/X$ divisor. Then there exists a positive real number $\epsilon\ll 1$, such that $H'=f^*H-\epsilon E$ is ample. We have $f^*D\cdot H'=f^{*}D\cdot (f^*H-\epsilon E)=0$, and $(f^*D)^2=D^2=0$. By the Hodge index theorem, we have $f^*D\equiv 0$, thus $D\equiv 0$.

When $\dim X\ge 3$, let $C$ be any curve on $X$, then there exist  $n\gg 0$ and a divisor $H_0\in |nH|$ such that $C\subset H_0$. We have $D|_{H_0}\cdot (H|_{H_0})^{d-2}=nD\cdot H^{d-1}=0$ and $(D|_{H_0})^2\cdot (H|_{H_0})^{d-3}=nD^2\cdot H^{d-2}=0$. By induction, $D|_{H_0}$ is numerically trivial, thus $D\cdot C=D|_{H_0}\cdot C=0$, which implies that $D\equiv 0$ on $X$.
\end{proof}


The following lemma is suggested by Chen Jiang.

\begin{lem}\label{lem: family numeric trivial}
Let $d$ be a positive integer, $\mathcal{X}\to \mathcal{T}$ a projective family bewteen normal quasi-projective varieties, such that $\dim(\mathcal{X}/\mathcal{T})=d$ and the morphism $N^1
(\mathcal{X}/\mathcal{T})\to N^1(X_t)$ is surjective for any general closed point $t\in \mathcal{T}$. Then there exists finitely many ample$/\mathcal{T}$ Cartier divisors $\mathcal{H}_i$ on $\mathcal{X}$ satisfying the following. For any $\Rr$-Cartier divisor $D$ on $\mathcal{X}_t$ for some general closed point $t\in T$, if $D\cdot \mathcal{H}_i|_{\mathcal{X}_t}^{d-1}=0$, then $D\equiv 0$. 
\end{lem}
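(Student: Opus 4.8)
The plan is to combine the generalized Hodge index theorem (Lemma \ref{lem: D^2H^n-2 DH^n-1}) with a Noetherian/constructibility argument on the base $\mathcal{T}$. The key observation is that over a single fiber, Lemma \ref{lem: D^2H^n-2 DH^n-1} tells us that two intersection-number conditions against a fixed ample class force numerical triviality; the issue is that the ample class $H$ provided by that lemma depends on the fiber $\mathcal{X}_t$, so we need finitely many classes $\mathcal{H}_i$ on the total space that collectively ``see'' all the necessary $H$'s as $t$ varies among general closed points.

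First I would set up the Néron–Severi spaces in families: after a stratification of $\mathcal{T}$ (and shrinking to a dense open so that ``general closed point'' is unambiguous), the restriction map $N^1(\mathcal{X}/\mathcal{T})\to N^1(\mathcal{X}_t)$ is surjective with locally constant source, and by the hypothesis we may choose a finite collection of Cartier divisors $\mathcal{H}_1,\dots,\mathcal{H}_r$ on $\mathcal{X}$, ample over $\mathcal{T}$, whose restrictions to a general fiber span $N^1(\mathcal{X}_t)_{\Rr}$ as a cone-generating set — more precisely, such that the ample cone of $X_t$ is covered by the (interiors of) simplicial cones generated by subsets of the $\mathcal{H}_i|_{X_t}$, or at least such that every ample class on $X_t$ is a nonnegative combination of finitely many fixed ample classes among a bounded family. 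Here I would use that ampleness is an open condition in families (so an ample class on the total space restricts to ample on general fibers) together with the fact that a finite-dimensional rational polyhedral approximation of the ample cone suffices. The goal of this step is: for every general $t$ and every ample $H$ on $X_t$, some $\mathcal{H}_i|_{X_t}$ can play the role of an auxiliary ample class in the Hodge index argument.

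Next, given $D$ on $\mathcal{X}_t$ with $D\cdot(\mathcal{H}_i|_{X_t})^{d-1}=0$ for all $i$, I want to deduce $D^2\cdot(\mathcal{H}_i|_{X_t})^{d-2}=0$ for some $i$ and then invoke Lemma \ref{lem: D^2H^n-2 DH^n-1}. The cleanest route: pick one $\mathcal{H}:=\mathcal{H}_1$; since $D\cdot\mathcal{H}|_{X_t}^{d-1}=0$, the class $D$ is ``$\mathcal{H}$-primitive,'' so by the Hodge index theorem on $X_t$ (applied after cutting down by general members of $|m\mathcal{H}|$ to reduce to a surface, exactly as in the proof of Lemma \ref{lem: D^2H^n-2 DH^n-1}) we get $D^2\cdot\mathcal{H}|_{X_t}^{d-2}\le 0$, with equality iff $D\equiv 0$. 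So it suffices to also produce the reverse inequality $D^2\cdot\mathcal{H}|_{X_t}^{d-2}\ge 0$, or to rule out strict negativity directly. One gets this from a second ample class: if $D\cdot\mathcal{H}|_{X_t}^{d-1}=0=D\cdot\mathcal{H}'|_{X_t}^{d-1}$ for a second ample $\mathcal{H}'$ among the $\mathcal{H}_i$ with $\mathcal{H}'|_{X_t}$ and $\mathcal{H}|_{X_t}$ not proportional, then expanding $D\cdot(\mathcal{H}+s\mathcal{H}')|_{X_t}^{d-1}=0$ for a range of $s$ and comparing with the Hodge-index sign of the mixed terms forces $D^2\cdot\mathcal{H}|_{X_t}^{d-2}=0$; I would make this precise by the usual Hodge-index bilinear form argument, noting that the primitive cohomology on which the form is negative definite is detected by finitely many of the $\mathcal{H}_i$ because they span $N^1(X_t)$. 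Then Lemma \ref{lem: D^2H^n-2 DH^n-1} applied with $H=\mathcal{H}|_{X_t}$ gives $D\equiv 0$.

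The main obstacle I expect is the uniformity over the base: ensuring that a single finite list $\mathcal{H}_1,\dots,\mathcal{H}_r$ of divisors on $\mathcal{X}$ works for all general $t$ simultaneously, rather than a list depending on $t$. This is where the surjectivity hypothesis $N^1(\mathcal{X}/\mathcal{T})\to N^1(X_t)$ is essential, combined with a Noetherian stratification argument (the fiberwise Picard number and the fiberwise ample cone are constructible), and with the openness of ampleness to guarantee that classes chosen ample on the total space remain ample on all general fibers. Once the spanning/cone-covering statement is pinned down uniformly, the per-fiber deduction is just the Hodge index computation already carried out in Lemma \ref{lem: D^2H^n-2 DH^n-1}. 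A minor technical point to handle carefully is that $\mathcal{X}_t$ is normal projective but perhaps not smooth, which is fine since Lemma \ref{lem: D^2H^n-2 DH^n-1} is stated in exactly that generality.
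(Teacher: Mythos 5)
The overall strategy is correct and shares the right ingredients with the paper's proof — use the surjectivity of $N^1(\mathcal{X}/\mathcal{T})\to N^1(\mathcal{X}_t)$ to get a finite spanning set of ample classes on the fiber, and reduce to Lemma~\ref{lem: D^2H^n-2 DH^n-1} by showing both $D\cdot H^{d-1}=0$ and $D^2\cdot H^{d-2}=0$ for some ample $H$. But the crucial step, deducing $D^2\cdot H^{d-2}=0$, is left as a sketch that does not actually go through. The ``two ample classes'' idea is insufficient: knowing that all the mixed intersection numbers $D\cdot H^a\cdot {H'}^{d-1-a}$ vanish lets you compute $D^2\cdot H^{d-2}$ only when $D$ itself lies in the plane spanned by $H$ and $H'$; if $\rk N^1(\mathcal{X}_t)>2$, the bilinear-form argument you gesture at simply does not close. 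Likewise, appealing to ``$D^2\cdot H^{d-2}\ge 0$'' to combine with the Hodge index bound $\le 0$ is never established.

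The paper's route fills exactly this gap, and it is worth making explicit. Pick a basis $\mathcal{H}_1',\dots,\mathcal{H}_k'$ of $N^1(\mathcal{X}/\mathcal{T})$ consisting of ample$/\mathcal{T}$ Cartier divisors, and take as the finite list \emph{all} combinations $\mathcal{H}_{(s_1,\dots,s_k)}=\sum_{j}s_j\mathcal{H}_j'$ with $s_j\in\{1,\dots,d\}$ (so $d^k$ classes, all ample$/\mathcal{T}$). The function $(s_1,\dots,s_k)\mapsto D\cdot\left(\sum_j s_jH_j'\right)^{d-1}$, with $H_j'=\mathcal{H}_j'|_{\mathcal{X}_t}$, is a polynomial of degree $\le d-1$ in each $s_j$; vanishing on the grid $\{1,\dots,d\}^k$ forces, by the Combinatorial Nullstellensatz (or iterated Lagrange interpolation), vanishing of every coefficient, i.e.\ $D\cdot\prod_j H_j'^{d_j}=0$ whenever $\sum_j d_j=d-1$. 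Now surjectivity gives $D=\sum_j a_jH_j'$ in $N^1(\mathcal{X}_t)_{\Rr}$, whence $D^2\cdot H_1'^{d-2}=\sum_j a_j\,\bigl(D\cdot H_j'\cdot H_1'^{d-2}\bigr)=0$ since each summand is one of the vanishing monomials. Combined with $D\cdot H_1'^{d-1}=0$, Lemma~\ref{lem: D^2H^n-2 DH^n-1} gives $D\equiv 0$. Note also that your first paragraph's machinery (covering the fiberwise ample cone by simplicial cones, Noetherian stratification of $\mathcal{T}$) is not needed: a single basis of $N^1(\mathcal{X}/\mathcal{T})$ and its $d^k$ positive-integer combinations do the job uniformly for all general $t$ at once.
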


\begin{proof}
Note that $N^1
(\mathcal{X}/\mathcal{T})\to N^1(\mathcal{X}_t)$ is surjective, let $\mathcal{D}$ be a preimage of $D$. 
Let $\mathcal{H}_1',\mathcal{H}_2',\ldots, \mathcal{H}_k'$ be a basis of $N^1
(\mathcal{X}/\mathcal{T})$, and $H_j'=\mathcal{H}_j'|_{\mathcal{X}_t}$. Suppose that $\left(\mathcal{D}\cdot (\sum_{j=1}^k s_j\mathcal{H}_j')^{d-1}\right)|_{\mathcal{X}_t}=D\cdot (\sum_{j=1}^k s_jH_j')^{d-1}=0$
for any positive integers $1\le s_j\le d$, $1\le j\le k$. Then by the Combinatorial Nullstellensatz, $D\cdot \prod_{j=1} H_j^{d_j}=0$ for any non-negative integer $d_j$ with $\sum_{j=1}^k d_j=d-1$. In particular, $D\cdot H_1^{d-1}=0$. Since $H_j$ spans $N^1(\mathcal{X}_t)$, $D=\sum_{j=1}^k a_jH_j$ for some real number $a_j$, thus $D\cdot \sum_{j=1}^k a_jH_j\cdot H_1^{d-2}=D^2\cdot H_1^{d-2}=0$. We conclude that $D\equiv 0$ by Lemma \ref{lem: D^2H^n-2 DH^n-1}. We may let $\{\mathcal{H}_1,\mathcal{H}_2,\ldots, \mathcal{H}_{d^k}\}:=\{\sum_{j=1}^k s_j\mathcal{H}_j'\mid 1\le s_j\le d, 1\le j\le k\}$ as required.
\end{proof}

\begin{lem}\label{lem: Zhuang R-Cartier}
Let $d$ be a positive integer, and $\delta$ a positive real number. Then there exists an integer $m$ and finitely many rational affine spaces $\mathcal{L}_i\subseteq\Rr^m$, $1\le i\le s$ depending only on $d,\delta$ satisfying the following. 

If $(X\ni x,B)$ is a klt germ of dimension $d$ with $B\in \{0\}\cup [\delta,+\infty)$ which admits a $\delta$-plt blow-up, then 
\begin{enumerate}
    \item $\Supp B$ has at most $m$ irreducible components, and 
    \item if $\mld(X\ni x,B)\ge \delta$, then $\{(\tilde{b}_1,\tilde{b}_2,\ldots,\tilde{b}_m)\in\Rr^m \mid K_X+\sum_{i=1}^m \tilde{b}_iB_i \text{ is }\Rr\text{-Cartier}\}=\mathcal{L}_j$ for some $j$, where $B_i$ are distinct irreducible components of $B$.
\end{enumerate}
\end{lem}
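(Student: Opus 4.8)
The plan is to use the $\delta$-plt blow-up to reduce, via adjunction, to a bounded family of log Fano pairs of dimension $d-1$, to read off the bound $m$ from that family, and to extract the finitely many affine spaces $\mathcal{L}_i$ from a Noetherian stratification of a bounded family of neighbourhoods of the Koll\'ar component. We may assume $\delta<1$, since klt forces the coefficients of $B$ into $[0,1)$, so if $\delta\geq 1$ then $B=0$ and $m=0$ works. Let $f\colon Y\to X$ be a $\delta$-plt blow-up of $(X\ni x,B)$ with Koll\'ar component $E$, so $\Exc(f)=E$, $f$ is an isomorphism over $X\setminus\{x\}$, and $f^{-1}(x)=E$. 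Writing $B=\sum b_iB_i$ with $b_i\geq\delta$ and setting $B^{\sharp}:=\delta\sum B_i$, the morphism $f$ is also a $\delta$-plt blow-up of $(X\ni x,B^{\sharp})$, the germ $(X\ni x,B^{\sharp})$ is klt with $\mld(X\ni x,B^{\sharp})\geq\mld(X\ni x,B)$, $B^{\sharp}$ has the same irreducible components as $B$, and the set in conclusion~(2) depends only on $(X,\{B_i\})$ and not on the coefficients; so after replacing $B$ by $B^{\sharp}$ we may assume all coefficients of $B$ equal $\delta$.

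By adjunction on $E$, write $(K_Y+f^{-1}_*B+E)|_E=K_E+\Delta_E$ with $\Delta_E=\Diff_E(f^{-1}_*B)$. From $K_Y+f^{-1}_*B+E=f^*(K_X+B)+a(E,X,B)E$ and $a(E,X,B)>0$ we obtain $-(K_E+\Delta_E)\equiv a(E,X,B)(-E|_E)$, which is ample, so $(E,\Delta_E)$ is a log Fano pair of dimension $d-1$. Since $(Y,f^{-1}_*B+E)$ is $\delta$-plt near $E$ and the coefficients of $f^{-1}_*B$ equal $\delta<1$, standard $\delta$-plt adjunction (the $\delta$-plt condition bounds the relevant singularity indices of $Y$ along $E$ by a constant depending only on $\delta$) shows $(E,\Delta_E)$ is $\delta'$-lc for some $\delta'=\delta'(d,\delta)>0$, with coefficients of $\Delta_E$ in a DCC set $\Phi(d,\delta)\subset[\delta,1-\delta']$. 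By the boundedness of $\epsilon$-lc log Fano pairs with DCC coefficients \cite{Bir21}, the pairs $(E,\Delta_E)$ so obtained form a log bounded family; fix a very ample divisor $\mathcal{A}$ bounding it. Each $B_i$ passes through $x$, hence $f^{-1}_*B_i$ meets $f^{-1}(x)=E$ in a nonzero divisor whose components appear in $\Delta_E$ with coefficient $\geq\delta$; therefore $\delta\cdot\#\{\text{components of }B\}\leq\deg_{\mathcal{A}}\Delta_E$, which is bounded, giving the bound $m=m(d,\delta)$ in~(1).

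For~(2), fix $(X\ni x,B)$ with $\mld(X\ni x,B)\geq\delta$, so $a(E,X,B)\geq\delta$ and $-E|_E=a(E,X,B)^{-1}(-(K_E+\Delta_E))$ has bounded $\mathcal{A}$-degree from above (and, using the bounded $\mathbb{Q}$-Cartier index of $E$ near $E$, also from below). Combining this with the boundedness of $(E,\Delta_E)$ and the $\delta$-plt condition (which bounds the $\mathbb{Q}$-Cartier index of $K_Y+f^{-1}_*B+E$ near $E$, e.g.\ via boundedness of complements), one obtains a bounded family $(\mathcal{Y}\supset\mathcal{E};\mathcal{E};\mathcal{D}_1,\dots,\mathcal{D}_m)\to T$ with $T$ of finite type, a member of which is isomorphic to $(Y;E;f^{-1}_*B_1,\dots)$ near $E$. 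For a fixed $(X\ni x,B)$, the set $\mathcal{L}:=\{(\tilde b_i):K_X+\sum\tilde b_iB_i\text{ is }\mathbb{R}\text{-Cartier near }x\}$ is a rational affine subspace of $\mathbb{R}^m$: it contains $(b_i)$ (as $K_X+B$ is $\mathbb{R}$-Cartier), and it is a translate of the rational linear subspace of $(\tilde b_i)$ with $\sum\tilde b_iB_i$ $\mathbb{R}$-Cartier near $x$, rationality being the standard fact that $\mathbb{R}$-Cartierness of divisors supported on a fixed set is cut out by $\mathbb{Q}$-linear conditions. Moreover $K_X+\sum\tilde b_iB_i$ is $\mathbb{R}$-Cartier near $x$ iff there is $e\in\mathbb{R}$ with $G:=(K_Y+f^{-1}_*B+E)+\sum(\tilde b_i-b_i)f^{-1}_*B_i+(e-1)E$ both $\mathbb{R}$-Cartier near $E$ and numerically trivial on $E$ (using that $f$-numerical triviality is equivalent to triviality on $E=f^{-1}(x)$, the negativity lemma, and that $K_Y+f^{-1}_*B+E$ is $\mathbb{R}$-Cartier near $E$ by plt-ness). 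Stratify $T$ into finitely many locally closed pieces over each of which $\mathcal{Y},\mathcal{E},\mathcal{D}_i$ are flat with well-behaved fibres and $N^1(\mathcal{Y}/T)\to N^1(\mathcal{Y}_t)$ is surjective; over each piece the locus of $(\tilde b_i,e)$ making $\sum(\tilde b_i-b_i)\mathcal{D}_i+(e-1)\mathcal{E}$ $\mathbb{R}$-Cartier near $\mathcal{E}_t$ is independent of $t$, and by Lemma~\ref{lem: family numeric trivial} (together with Lemma~\ref{lem: D^2H^n-2 DH^n-1}) the condition $G|_{\mathcal{E}_t}\equiv 0$ is the vanishing of finitely many intersection numbers against fixed divisors $\mathcal{H}_i$, which is locally constant. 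Hence $\mathcal{L}$ is constant over each piece; since there are finitely many pieces and finitely many ways to assign the components of $B$ to the $m$ coordinates, the collection of all such $\mathcal{L}$ is finite, giving $\mathcal{L}_1,\dots,\mathcal{L}_s$.

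The main obstacle is the construction and exploitation of the bounded family $(\mathcal{Y}\supset\mathcal{E};\dots)\to T$ of neighbourhoods of $E$: both halves of the hypothesis enter here — the $\delta$-plt blow-up bounds $(E,\Delta_E)$ and the singularities of $Y$ along $E$, while $\mld(X\ni x,B)\geq\delta$ is precisely what bounds $a(E,X,B)$ from below and hence $-E|_E$ from above — and one then has to prove that the $\mathbb{R}$-Cartier locus $\mathcal{L}$ is locally constant on a Noetherian stratification, which requires combining the constancy of $\mathbb{R}$-Cartier (and $\mathbb{Q}$-Cartier) indices in bounded families of pairs with the treatment of the numerical-triviality condition through Lemma~\ref{lem: family numeric trivial}.
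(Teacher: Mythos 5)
Your overall strategy for this lemma is the same as the paper's: use the $\delta$-plt blow-up, do adjunction to the Koll\'ar component $E$, invoke boundedness of the resulting $\delta'$-lc log Fano pairs $(E,B_E)$ via BAB to get part (1), and then test $\Rr$-Cartierness of $K_X+\tilde B$ by intersection numbers on $E$ against the uniformly chosen ample divisors of Lemma~\ref{lem: family numeric trivial}, yielding finitely many rational affine spaces. Part~(1) of your argument is essentially identical to the paper's; the preliminary replacement of $B$ by $B^\sharp=\delta\sum B_i$ is harmless but not needed.

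For part~(2), however, your route diverges from the paper's in a way that creates a genuine gap. You work directly on the (not necessarily $\Qq$-factorial) $\delta$-plt blow-up $Y$, and therefore your criterion requires the divisor $G=(K_Y+f^{-1}_*B+E)+\sum(\tilde b_i-b_i)f^{-1}_*B_i+(e-1)E$ to be \emph{$\Rr$-Cartier near $E$} as an extra condition, which in turn forces you to produce a bounded family of \emph{neighbourhoods} $(\mathcal Y\supset\mathcal E;\mathcal D_1,\dots,\mathcal D_m)\to T$ of $E$ inside $Y$. That is a substantially stronger boundedness claim than the boundedness of the $(d-1)$-dimensional projective pairs $(E,B_E)$: it concerns a $d$-dimensional (and non-proper) germ, and you justify it only by a parenthetical remark about ``bounded $\Qq$-Cartier index near $E$'' and ``boundedness of complements'', neither of which obviously yields boundedness of the ambient germ. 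You yourself flag this as the main obstacle, but the proposal does not resolve it. The paper avoids this entirely by first composing $Y'\to X$ with a small $\Qq$-factorialization to get a $\Qq$-factorial weak $\delta$-plt blow-up $Y\to X$ (this is \cite[Lemma 3.13]{HLS19}). On the $\Qq$-factorial $Y$ every Weil divisor is automatically $\Rr$-Cartier, so the extra $\Rr$-Cartierness-near-$E$ condition disappears, and by the cone theorem for the Fano-type morphism $Y\to X$, the $\Rr$-Cartierness of $K_X+\tilde B$ reduces to a single numerical condition $K_Y+f^{-1}_*\tilde B+(1-\tilde a)E\equiv 0$ over $X$; since $\Exc(f)=\Supp E=f^{-1}(x)$, this is equivalent to a condition that can be read off entirely on $E$, i.e.\ on the bounded family $(E,B_E)$. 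You should insert the $\Qq$-factorialization step; once you do, the family-of-neighbourhoods machinery and the Noetherian stratification of $T$ become unnecessary and your argument collapses onto the paper's.
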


\begin{proof}
Let $f':Y'\to X$ be a $\delta$-plt blow up of $(X\ni x,B)$ with the exceptional divisor $E'$. Let $Y\to Y'$ be a small $\Qq$-factorization and $f:Y\to Y'\to X$ the induced morphism. By \cite[Lemma 3.13]{HLS19}, $f:Y\to X$ is a $\Qq$-factorial weak $\delta$-plt blow-up of $(X\ni x,B)$. Let $K_E+B_E:=(K_Y+f^{-1}_*B+E)|_{E}$. Let $B=\sum_{i=1}^{m'} b_iB_i$ where $B_i$ are distinct irreducible components. Note that for each $b_j>0$, there exists an irreducible component $V$ of $B_E$ such that $\mult_{V} B_E=\frac{u-1+\sum_i n_ib_i}{u}<1$ for some positive integer $u$ and non-negative integer $n_i$ with $n_j\ge 1$. In particular, $\sum_{V}\mult_{V} B_E\ge \sum_{i=1}^{m'}b_i$, where the sum runs over all irreducible components $V$ of $B_E$

By \cite[Lemma 4.1]{HLS19}, $(E, B_E)$ is $\delta$-klt, and belongs to a log bounded family. Thus, we may pick a very ample Cartier divisor $H$ on $E$, such that $-K_E\cdot H^{d-2}$ is bounded from above by a positive real number $M$ which only depends on $d,\delta$. Moreover, $B_E\cdot H^{d-2}\le-K_E\cdot H^{d-2}\le M$. Hence 
$$m'\delta\le \sum_{i=1}^{m'}b_i \le B_E\cdot H^{d-2}\le M.$$
We may let $m=\lfloor\frac{M}{\delta}\rfloor$ as required.

(2) We may assume that $m=m'$, and $B=\sum_{i=1}^m b_iB_i$. Since $K_E+B_E=(K_Y+\sum_{i=1}^m \tilde{b}_iB_i+E)|_{E}\sim_{\Rr} aE|_{E}$, where $a=a(E,X,B)\ge \delta$, by \cite[Proposition 4.4]{HLS19}, there exists a positive integer $I$ depending only on $d,\delta$, such that $IE|_{E}$ is Cartier. Suppose that $E$ belongs to the bounded family $\mathcal{X}\to \mathcal{S}$, by the proof of \cite[Proposition 2.9]{HX15}, possibly passing to a subsequence and shrinking $\mathcal{S}$, we may assume that $\mathcal{S}$ is irreducible and the geometric generic fiber of $\mathcal{X}\to \mathcal{S}$ is of Fano type. By \cite[Proposition 2.8]{HX15}, there exists a finite dominant
morphism $\mathcal{T}\to\mathcal{S}$, such that
$\mathcal{X}_{\mathcal{T}}:=\mathcal{X}\times_{\mathcal{S}}\mathcal{T}$, $N^1
(\mathcal{X}_{\mathcal{T}}/\mathcal{T})\to N^1(\mathcal{X}_{\mathcal{T}_t})$ is an isomorphism for any point $t\in \mathcal{T}$. Let $\mathcal{H}_i$ be the ample Cartier divisors constructed in Lemma \ref{lem: family numeric trivial}. Let $H_i=\mathcal{H}_i|_{E}$ for each $i$.
Since $E$ is bounded, and $-IE|_{E}$ is Cartier and nef, possibly passing to a subsequence, we may assume that $-E|_{E}\cdot H_i^{d-2}$ is fixed for each $i$. 

Since $Y$ is of Fano type over $X$, by the cone theorem, $K_X+\tilde{B}:=K_X+\sum_{i=1}^m \tilde{b}_iB_i$ is $\Rr$-Cartier if and only if $K_Y+\sum_{i=1}^m f_{*}^{-1}\tilde{b}_iB_i+(1-\tilde{a})E\equiv 0/X$ for some real number $\tilde{a}$, the latter is equivalent to $K_E+\tilde{B}_E:=(K_Y+\sum_{i=1}^m f_{*}^{-1}\tilde{b}_iB_i+E)|_{E}\sim_{\Rr} \tilde{a}E|_{E}$. Thus by construction of $H_i$, $K_X+\tilde{B}$ is $\Rr$-Cartier if and only if $\frac{(K_E+\tilde{B}_E)\cdot H_i^{d-2}}{E|_{E}\cdot H_i^{d-2}}$ are all equal. Since $(E, B_E)$ is $\delta$-klt, and belongs to a log bounded family, possibly passing to a subsequence, for each irreducible component $V$ of $\tilde{B}_E$, $\mult_{V}\tilde{B}_E=\frac{u-1+\sum_i n_ib_i}{u}$ for some fixed positive integer $u$ and non-negative integer $n_i$, and $V\cdot H_i^{d-2}$ are fixed for each $i$ and $V$. Hence the equation $\frac{(K_E+\tilde{B}_E)\cdot H_1^{d-2}}{E|_{E}\cdot H_1^{d-2}}=\frac{(K_E+\tilde{B}_E)\cdot H_2^{d-2}}{E|_{E}\cdot H_2^{d-2}}=\cdots$ defines a rational affine space. We finish the proof.
\end{proof}



\begin{proof}[Second proof of Theorem \ref{thm: ACC local volume}] 
We may assume $d\ge 2$. By Theorem \ref{thm: ACC criterion}, it suffices to verify that $\nvol$ is discrete, bounded, strictly decreasing, approximate, and strictly $N$-complemented, and satisfies the inversion of stability of $\Rr$-Cartierness.

By \cite[Theorem 1.2]{XZ24}, $\nvol$ is discrete. By \cite[Theorem 1.6]{LX19}, $\nvol$ is bounded. By definition, $\nvol$ is strictly decreasing. By Lemma \ref{lem: zhu24 2.11 real}, $\nvol$ is approximate. By Theorem \ref{thm:existence of delta-plt blow-up}, Lemma \ref{lem: Zhuang R-Cartier} and \cite[Lemma 5.17]{HLS19}, $\nvol$ is inversion of stability of $\Rr$-Cartierness. By Proposition \ref{prop: zhu24 4.8 generalized}, $\nvol$ is strictly $N$-complemented.
\end{proof}

\begin{rem}
Our first proof of Theorem \ref{thm: ACC local volume} relies on Corollary \ref{cor: nv lower bound imply mld}, that is, an upper bound for all $\mld(X\ni x,B)$. The proof provided in this section essentially relies on much deeper results: an upper bound for all $\mld^K(X\ni x,B)$, \cite[Theorem 1.5]{Zhu24}, and the BAB conjecture proved by Birkar.
\end{rem}

\begin{rem}
We may also apply Theorem \ref{thm: ACC criterion} to reprove the ACC for mlds of exceptional singularities \cite[Theorem 1.2]{HLS19}.
\end{rem}

\medskip

\noindent\textbf{Other behavior of numerical invariants.} In birational geometry, a lot of invariants for pairs with no boundary and pairs with fixed boundary coefficients are known, or conjectured to satisfy the ACC or DCC. Usually, the same property will hold for pair with DCC boundary coefficients. What we have proven in this paper confirms this for local volumes. 

Nevertheless, there is one more thing: local volumes for varieties with fixed coefficients are not only known to satisfy the ACC, but are also known to be discrete away from $0$, that is, their only accumulation point is $0$. This is very different from many other invariants (e.g. volume, mld). The following theorem gives a positive answer to a question asked by C. Xu and Z. Zhuang.


Recall that $$\Vol_{d,\Ii}^{\loc}:=\left\{\nvol(X\ni x,B)\Bigm| \dim X=d,B\in\Ii,x\text{ is a closed point}\right\}.$$

\begin{thm}\label{thm: acc coefficient}
    Let $d$ be a positive integer, $\epsilon$ a positive real number, and $\Ii\subset [0,1]$ an ACC set such that $\Ii\backslash\{0\}\subset [\epsilon, 1]$. Then $0$ is the only accumulation point of $\Vol^{\loc}_{d,\Ii}$ from above, that is, $\Vol^{\loc}_{d,\Ii}\cap [\epsilon_0,+\infty)$
    satisfy the DCC for any positive real number $\epsilon_0$.
\end{thm}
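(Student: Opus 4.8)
We may assume $d\ge 2$: when $d=1$ every germ $X_i$ is a smooth curve, on which every Weil divisor is Cartier, and $\Vol^{\loc}_{1,\Ii}\subseteq\{1\}\cup\{1-\gamma\mid\gamma\in\Ii\}$, the reflection of the ACC set $\Ii$, which is DCC. Fix $\epsilon_0>0$ and suppose for contradiction that $\Vol^{\loc}_{d,\Ii}\cap[\epsilon_0,+\infty)$ is not DCC; then there is a strictly decreasing sequence $v_i=\nvol(X_i\ni x_i,B_i)$ with $v_i\ge\epsilon_0$, $\dim X_i=d$, $B_i\in\Ii$, $x_i$ a closed point, and $v_i\downarrow v_\infty\ge\epsilon_0>0$. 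Deleting the components of $B_i$ through which $x_i$ does not pass changes neither $\nvol(X_i\ni x_i,\cdot)$ nor the $\Rr$-Cartierness of $K_{X_i}+B_i$ near $x_i$, so we may assume every component of $B_i$ passes through $x_i$; then all coefficients of $B_i$ lie in $\Ii\cap[\epsilon,1]$. Write $\epsilon':=\min\{\epsilon,\epsilon_0\}$.

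First I would extract the uniform boundedness implied by $\nvol(X_i\ni x_i,B_i)\ge\epsilon_0$. By Lemma \ref{lem: closed point nv lower bound implies mld lower bound}, $\mld(X_i\ni x_i,B_i)\ge\epsilon_1$ for some $\epsilon_1=\epsilon_1(d,\epsilon')>0$; by Theorem \ref{thm:existence of delta-plt blow-up}, $(X_i\ni x_i,B_i)$ admits a $\delta$-plt blow-up for some $\delta=\delta(d,\epsilon')>0$. Put $\delta_*:=\min\{\delta,\epsilon,\epsilon_1\}>0$. By Lemma \ref{lem: Zhuang R-Cartier} (with its parameter equal to $\delta_*$), $\Supp B_i$ has at most $m=m(d,\delta_*)$ irreducible components, and---since $\mld(X_i\ni x_i,B_i)\ge\epsilon_1\ge\delta_*$---the set of coefficient vectors $(\tilde b_j)$ making $K_{X_i}+\sum_j\tilde b_jB_{i,j}$ $\Rr$-Cartier near $x_i$ equals one of finitely many rational affine subspaces depending only on $d$ and $\delta_*$. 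Passing to a subsequence, I may assume the number of components is a constant $m_0\le m$, this affine subspace is a fixed $\mathcal{L}$, and---after relabeling so that $b_{i,1}\ge\cdots\ge b_{i,m_0}$---each $(b_{i,j})_i$ is monotone with limit $\bar b_j\in[\epsilon,1]$.

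The crucial point is that $\Ii$ satisfies the ACC, so any monotone \emph{increasing} sequence in $\Ii$ is eventually constant. Hence, writing $\bar B_i:=\sum_j\bar b_jB_{i,j}$, for all $i\gg 0$ we have $\bar B_i\le B_i$ (equality on the increasing coordinates, $\bar b_j\le b_{i,j}$ on the decreasing ones). Since $(b_{i,j})_j\in\mathcal{L}$ and $\mathcal{L}$ is closed, also $(\bar b_j)_j\in\mathcal{L}$, so $K_{X_i}+\bar B_i$ is $\Rr$-Cartier near $x_i$; comparing log discrepancies gives $a(E,X_i,\bar B_i)=a(E,X_i,B_i)+\mult_E(B_i-\bar B_i)\ge a(E,X_i,B_i)>0$, so $(X_i\ni x_i,\bar B_i)$ is klt, and monotonicity of $\nvol$ in the boundary (immediate from the valuative formula, \cite[Theorem 1.3]{LX20}) gives $\nvol(X_i\ni x_i,\bar B_i)\ge\nvol(X_i\ni x_i,B_i)=v_i\ge\epsilon_0$. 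Now $\bar B_i$ has coefficients in the \emph{fixed finite} set $\{\bar b_1,\dots,\bar b_{m_0}\}$, so by \cite[Theorem 1.2]{XZ24} the values $\nvol(X_i\ni x_i,\bar B_i)\ge\epsilon_0$ form a finite set; after a further subsequence, $\nvol(X_i\ni x_i,\bar B_i)=\bar v$ is a constant $\ge\epsilon_0$.

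It remains to contradict the strict monotonicity of $(v_i)$. We may assume $D_i:=B_i-\bar B_i\ne 0$ for all $i$ (else $v_i=\bar v$ infinitely often), so $D_i\ge 0$ is $\Rr$-Cartier near $x_i$ with $\|D_i\|\to 0$. Applying Proposition \ref{prop: zhu24 4.8 generalized} to $(X_i\ni x_i,\bar B_i)$---whose coefficients are $\ge\epsilon\ge\epsilon'$ and whose local volume is $\bar v\ge\epsilon'$---produces, for some $\delta_0=\delta_0(d,\epsilon')>0$, a divisor $\bar B_i^+\ge(1+\delta_0)\bar B_i$ with $(X_i\ni x_i,\bar B_i^+)$ klt. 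Setting $t_i:=\delta_0\epsilon/\|D_i\|-1\to+\infty$, one checks that $(1+t_i)D_i\le\delta_0\bar B_i$ coefficientwise for $i\gg 0$, hence $\bar B_i+(1+t_i)D_i\le(1+\delta_0)\bar B_i\le\bar B_i^+$, so $(X_i\ni x_i,\bar B_i+(1+t_i)D_i)$ is klt, in particular lc. Lemma \ref{lem: zhu24 2.11 real} then gives
$$v_i=\nvol(X_i\ni x_i,\bar B_i+D_i)\ge\left(\tfrac{t_i}{1+t_i}\right)^{d}\nvol(X_i\ni x_i,\bar B_i)=\left(\tfrac{t_i}{1+t_i}\right)^{d}\bar v,$$
so $\liminf_i v_i\ge\bar v$; combined with $v_i\le\bar v$ (from $\bar B_i\le B_i$) this forces $v_i\to\bar v$. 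But a strictly decreasing sequence all of whose terms are $\le\bar v$ has limit strictly below $\bar v$---a contradiction, which proves the theorem.

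The step I expect to be the main obstacle is the passage from $B_i$ to the ``limit boundary'' $\bar B_i$ while keeping $K_{X_i}+\bar B_i$ $\Rr$-Cartier near $x_i$: the $X_i$ need not be $\Qq$-Gorenstein or $\Qq$-factorial, so this is a genuine constraint, and it is exactly here that the $\delta$-plt blow-up together with the finiteness of rational affine models in Lemma \ref{lem: Zhuang R-Cartier} (plus closedness of affine subspaces under limits) is used. The secondary subtlety is the quantitative estimate $v_i\ge(t_i/(1+t_i))^d\bar v$ with $t_i\to+\infty$, which relies on the auxiliary lc pair $(X_i\ni x_i,\bar B_i+(1+t_i)D_i)$ supplied by the complement-type Proposition \ref{prop: zhu24 4.8 generalized}.
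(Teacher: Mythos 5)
Your proof is correct and follows essentially the same route as the paper's: use the $\nvol$ and coefficient lower bounds to get a $\delta$-plt blow-up, invoke Lemma \ref{lem: Zhuang R-Cartier} to bound the number of components and pin the $\Rr$-Cartier locus to a fixed rational affine subspace, pass to the limit boundary $\bar B_i$, apply the discreteness result of \cite[Theorem 1.2]{XZ24} to fix $\nvol(X_i\ni x_i,\bar B_i)=\bar v$, and then use Proposition \ref{prop: zhu24 4.8 generalized} together with Lemma \ref{lem: zhu24 2.11 real} to force $v_i\to\bar v$, contradicting strict monotonicity. The only cosmetic differences from the paper's proof are that you write an explicit $t_i=\delta_0\epsilon/\|D_i\|-1$ instead of $t_i=\lct(X_i\ni x_i,\bar B_i;D_i)$, and you treat the trivial $d=1$ case separately.
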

\begin{proof}
Possibly replacing $\Ii$ with $\bar\Ii$, we may assume that $\Ii=\bar\Ii$. 

Suppose that the theorem does not hold. Then there exists a sequence of klt germs $(X_i\ni x_i,B_i)$ of dimension $d$ such that $B_i\in\Ii$, and $v_i:=\nvol(X_i\ni x_i,B_i)\ge \epsilon_0$ is strictly decreasing. Let $v:=\lim_{i\to +\infty} v_i$. By Theorem \ref{thm:existence of delta-plt blow-up}, there exists a positive real number $\delta$ depending only on $d,\epsilon,\epsilon_0$, such that each $(X_i\ni x_i,B_i)$ admits a $\delta$-plt blow-up.  

Let $B_i=\sum_{j=1}^{m_i}b_{i,j}B_{i,j}$ where $B_{i,j}$ are the irreducible components of $B_i$ and $b_{i,j}\in\Ii$. Possibly replacing $B_i$ we may assume that $x_i\in B_{i,j}$ for any $i,j$. By Lemma \ref{lem: Zhuang R-Cartier}(1), possibly passing to a subsequence, we may assume that $m_i=m$ for any $i$, $\{b_{i,j}\}_{i=1}^{+\infty}$ is decreasing for each $j$. Let $b_j=\lim_{i\to +\infty}b_{i,j}$ for each $j$. By Lemma \ref{lem: closed point nv lower bound implies mld lower bound} and Lemma \ref{lem: Zhuang R-Cartier}(2), possibly passing to a subsequence, we may further assume that $K_{X_i}+\overline{B_i}$ is $\Rr$-Cartier for any $i$, where $\overline{B_i}=\sum_{j=1}^m b_jB_{i,j}$. By \cite[Theorem 1.2]{XZ24}, passing to a subsequence, we may assume that there exists a positive real number $\overline{v}>v$, such that $\nvol(X_i\ni x_i,\overline{B_i})=\overline{v}$ for any $i$.

Let $D_i:=B_i-\overline{B_i}$ and $t_i:=\lct(X_i\ni x_i,\overline{B_i};D_i)$. Since $\lim_{i\rightarrow+\infty}||D_i||=0$, by Proposition \ref{prop: zhu24 4.8 generalized}, $t_i\to +\infty$. By Lemma \ref{lem: zhu24 2.11 real}, 
$$v_i=\nvol(X_i\ni x_i,B_i)\geq\left(\frac{t_i}{1+t_i}\right)^d\overline{v},$$
which implies that $v=\lim_{i\to+\infty} v_i\ge \overline{v}$, a contradiction. 
\end{proof}


\begin{rem}
It is worth mentioning that other invariants may not share similar properties as local volumes in Theorem \ref{thm: acc coefficient}. For example, consider
\[
    \MLD_{d,\Ii}^{\smooth}:=\{\mld(X\ni x,B)\mid \dim X=d, X\text{ is smooth}, B\in\Ii\}.
\]
Then, when $\Ii$ is a finite set, $\MLD_{d,\Ii}^{\smooth}$ is also a finite set, and in particular, it is discrete away from $0$ \cite{Kaw14}. When $\Ii$ is a DCC set, it is conjectured that $\MLD_{d,\Ii}^{\smooth}$ satisfies the ACC \cite{Sho88}. However, for $d=2$ and $\Ii=\{\frac{1}{2}+\frac{1}{n}\mid n\geq 3\}\subset [\frac{1}{2},+\infty)$, we have
$\mld(\mathbb \bA^2\ni o,(\frac{1}{2}+\frac{1}{n})(x^2+y^n=0))=0$ and $\mld(\mathbb \bA^2\ni o,\frac{1}{2}(x^2+y^n=0))=1$ for any $n$. Hence for any real number $a\in (0,1)$, there are two sequences $n_i,m_i$ such that $n_i>m_i$, and 
\[
    a_i:=\mld\left(\bA^2\ni o,B_i:=\left(\frac{1}{2}+\frac{1}{n_i}\right)(x^2+y^{m_i})\right)
\]
is strictly decreasing and $\lim_{i\rightarrow+\infty}a_i=a>0$. Thus, although $\Ii$ is ACC and has a positive lower bound $\frac{1}{2}$, $\MLD_{2,\Ii}^{\smooth}$ is not DCC away from $0$. In fact, the previous arguments show that $[0,1)\subset \MLD_{2,\Ii}^{\smooth}$.

Note that in the example above, we have $\lim_{i\rightarrow+\infty}\lct(\bA^2\ni o, B_i;B_i)=0$.
\end{rem}

\end{document}